\definecolor{celestialblue}{rgb}{0.29, 0.59, 0.82}
\def\section{\@startsection{section}{1}%
 \z@{.7\linespacing\@plus\linespacing}{.5\linespacing}%
 {\normalfont\bfseries\scshape\centering}}
\def\subsection{\@startsection{subsection}{2}%
 \z@{.5\linespacing\@plus\linespacing}{.5\linespacing}%
 {\normalfont\bfseries\scshape}}
\def\subsubsection{\@startsection{subsubsection}{3}%
 \z@{.5\linespacing\@plus\linespacing}{-.5em}
 {\normalfont\bfseries\itshape}}
\definecolor{darkgreen}{rgb}{0,0.4,0}
\definecolor{BrickRed}{rgb}{0.65,0.08,0}
\newcommand{\beq}{\begin{equation}}
\newcommand{\eeq}{\end{equation}}
\newcommand{\gf}{generating function}
\newcommand{\gfs}{generating functions}
\newcommand{\fps}{formal power series}
\newcommand{\Maple}{{\sc Maple}}
\newcommand{\Pzero}{U_0}
\newcommand{\Pun}{U_1}
\newcommand{\Ptwo}{U_2}
\newcommand{\num}{N}
\newcommand{\zt}{\tilde z}
\DeclareMathOperator{\Rat}{Rat}
\DeclareMathOperator{\numsmall}{num}
\DeclareMathOperator{\den}{den}
\DeclareMathOperator\OS{OS}
\newcommand{\ns}{{\mathbb N}} 
\newcommand{\zs}{{\mathbb Z}} 
\newcommand{\qs}{{\mathbb Q}}  
\newcommand{\rs}{{\mathbb R}} 
\newcommand{\LandauO}{\mathcal{O}}
\newcommand{\Bc}{\mathcal{B}}
\newcommand{\Cc}{\mathcal{C}}
\newcommand{\Qc}{\mathcal{Q}}
\newcommand{\Lc}{\mathcal{L}}
\newcommand{\Sc}{\mathcal{S}}
\newcommand{\C}{\mathbb{C}}
\newcommand{\Q}{\mathbb{Q}}
\newcommand{\Z}{{\mathbb Z}}
\newcommand{\tQ}{\widetilde Q}
\newcommand{\tC}{\widetilde C}
\newcommand{\KK}{\mathbb{K}} 
\newcommand{\vareps}{\varepsilon}
\newtheorem{theo}{Theorem}[section]
\newtheorem{lemma}[theo]{Lemma}
\newtheorem{prop}[theo]{Proposition}
\newtheorem{coro}[theo]{Corollary}
\newtheorem{conjecture}[theo]{Conjecture}
\theoremstyle{remark}
\newtheorem*{remark}{Remark}
\newcommand{\id}{{\rm id} }
\newcommand{\GA}{\mathbb{A}}
\newcommand{\GK}{\mathbb{K}}
\newcommand{\cS}{\mathcal S}
\newcommand{\spol}{S}
\newcommand{\bx}{\bar{x}}
\newcommand{\by}{\bar{y}}
\newcommand{\bu}{\bar{u}}
\newcommand{\bv}{\bar{v}}
\newcommand{\bzeta}{\bar\zeta}  
\newcommand{\xzero}{u}
\newcommand{\av}{v}
\newcommand{\aw}{w}
\newcommand{\DSA}{B_1}
\newcommand{\DSB}{B_2}
\newcommand{\tz}{ \tilde z}
\newcommand{\St}{\tilde{S}}
\newcommand{\T}{\tilde w}
\newcommand{\Sh}{\hat{S}}
\newcommand{\Rh}{\hat{R}}
\DeclareMathOperator\Pol{Pol}
\DeclareMathOperator\Alg{Alg}
\def\emm#1,{{\em #1}}
\newcommand*\circled[1]{\tikz[baseline=(char.base)]{
            \node[shape=circle,draw,inner sep=1pt] (char) {\footnotesize #1};}}
\newcommand{\hookSEarrow}{\mathrel{\rotatebox[origin=c]{-45}{$\hookrightarrow$}}}
\newcommand{\hookNEarrow}{\mathrel{\rotatebox[origin=c]{45}{$\hookrightarrow$}}}
\begin{document}

\author{Mireille Bousquet-M\'elou \and Michael Wallner}

\thanks{MBM was partially supported by the ANR projects DeRerumNatura (ANR-19-CE40-0018) and Combiné (ANR-19-CE48-0011). MW was supported by an Erwin Schr\"odinger Fellowship and a Stand-Alone Project of the Austrian Science Fund (FWF): J~4162 and P~34142.}

\date{\today}

\title{Walks avoiding a quadrant and the reflection principle}

\begin{abstract}
  We continue the enumeration of plane lattice walks
   with small steps avoiding the negative quadrant, initiated by the first author in 2016. 
  We solve in detail a new case, namely the king model where all eight nearest neighbour steps are allowed.  The associated \gf\ is proved to be the sum of  a simple, explicit D-finite series (related to the number
  of walks confined to the first quadrant), and an algebraic one. This was already the case for the two models solved 
  by the first author  in 2016.
  The principle of the approach is also the same,
  but challenging theoretical and computational difficulties arise as we now handle algebraic series of larger degree.

We expect a similar algebraicity phenomenon to  hold for the seven \emm Weyl,\ step sets, which are those for which walks confined to the first quadrant can be counted using the reflection principle. With this paper, this is now proved for three of them. For the remaining four, we predict the D-finite part of the solution, and in three of the four cases, give evidence for the algebraicity of the remaining part.
\end{abstract}

\maketitle


\section{Introduction}
\label{sec:intro}

Over the last two decades, the enumeration of walks in the non-negative quadrant
\[
  \Qc:= \{ (i,j) : i \geq 0 \text{ and } j \geq 0 \}
\]
has attracted a lot of attention  and  established its own scientific community with {close to a hundred research papers; see, e.g., \cite{bomi10} and citing papers.
One of its {attractive} features is the diversity of the used tools, such as
algebra on \fps~\cite{bomi10,Mishna-jcta}, bijective approaches~\cite{Bern07,chyzak-yeats},
computer algebra\cite{BoKa08,KaKoZe08}, 
complex analysis\cite{raschel-unified,BeBMRa-17},
probability theory\cite{BoRaSa14,DenisovWachtel15}, and difference
Galois theory\cite{DHRS-17}. Most of the attention has focused on  walks \emm with small steps,, that is, taking their steps in a fixed subset $\cS$ of $\{-1,0, 1\}^2\setminus{(0,0)}$. For each such step set~$\cS$  (often called a \emm model, henceforth), one considers a trivariate \gf\ $Q(x,y;t)$ defined by
\beq\label{Q-def}
  Q(x,y;t)= \sum_{n \ge 0}\sum_{i,j \in \Qc} q_{i,j}(n) x^i y^j t^n,
\eeq
where $q_{i,j}(n)$ is the number of quadrant walks with steps in $\cS$, starting from $(0,0)$, ending at $(i,j)$, and having in total $n$ steps. For each $\cS$, one now knows whether and where this series fits in the following classical hierarchy of series:
\[ 
  \hbox{ rational } \subset \hbox{ algebraic } \subset  \hbox{ D-finite }
        \subset \hbox{ D-algebraic}.
\] 
Recall  that a series (say $Q(x,y;t)$ in our case) is \emm rational, if it is the ratio of two polynomials, \emm algebraic, if it satisfies a polynomial equation (with coefficients that are polynomials in the variables), \emm D-finite, if it satisfies three \emm linear, differential equations (one in each variable), again with polynomial coefficients, and finally \emm D-algebraic,
if it satisfies three \emm polynomial, differential equations. It has been known since the $1980$s~\cite{gessel-factorization} that the \gf\ of walks confined to a half-plane is algebraic. This explains why $Q(x,y;t)$ is algebraic in some cases, for instance when $\cS=\{\rightarrow, \uparrow, \leftarrow\}$: indeed, confining walks to the first quadrant is then equivalent to confining them to the right half-plane $i\ge 0$. It was shown in~\cite{bomi10} that exactly $79$ (essentially distinct)
quadrant problems with small steps  are \emm not, equivalent to any half-plane problem.
One central result in the classification of these $79$ models  is that  $Q(x,y;t)$ is D-finite if and only if a certain group, which is easy to construct from the step set $\cS$, is finite~\cite{bomi10,BoRaSa14,KuRa12,BoKa08,Mishna-Rechni,melczer-mishna-sing}. 

Since any strictly convex closed cone can be deformed into the first quadrant, the enumeration of walks confined to $\Qc$  captures all such counting problems (provided we consider all possible step sets $\cS$, not only small steps).
Similarly, any non-convex closed cone in two dimensions can be deformed into  the three-quadrant plane 
\beq\label{Ccone-def}
	\Cc := \{ (i,j) : i \geq 0 \text{ or } j \geq 0 \},
\eeq
and in 2016, the first author initiated  the enumeration of  lattice paths confined to $\Cc$~\cite{Bousquet2016}. 
Therein, the two most natural models of walks were studied:
\emm simple walks, with steps in $\{\rightarrow,  \uparrow, \leftarrow, \downarrow \}$, 
and \emm diagonal walks, with steps in $\{\nearrow, \nwarrow, \swarrow,  \searrow \}$.
In both cases, the generating function
\beq\label{C-def}
  C(x,y;t)=\sum_{n\ge 0 } \sum_{i,j\in \Cc}c_{i,j}(n) x^i y^j t^n
\eeq
defined analogously to $Q(x,y;t)$ (see~\eqref{Q-def})
was proved to differ  from the series
\beq\label{Q-combin}
\frac{1}{3}\left( Q\left(x,y;t\right) -  \frac{1}{x^2}\,Q\!\left(\frac{1}{x},y;t\right) - \frac{1}{y^2}\,Q\!\left(x,\frac{1}{y};t\right) \right)
\eeq
by an \emm algebraic,  one. In both cases, the underlying group is finite, hence $Q(x,y;t)$ is D-finite and $C(x,y;t)$ is  D-finite as well.

\begin{figure}[hb]
	\centering
	\scalebox{0.99}{
%
%
%

\begin{tikzpicture}[scale=0.5]
	\newcommand*{\quadw}{6}
	\newcommand*{\quadh}{6}
	\pgfmathsetmacro{\dd}{\quadh-\quadw}
	
	\draw[step=1cm, Gray,very thin,dashed] (-0,-\quadh+0.1) grid (\quadw-0.1,0);
	\draw[step=1cm, Gray,very thin,dashed] (-\quadw+0.1,-0) grid (\quadw-0.1,\quadh-0.1);
	
	\foreach \i in {0,...,\quadh}
  {		
		\draw[-,Gray,very thin,dashed] (-\quadw,\i) -- (-\i+\dd,\quadh);
		\draw[-,Gray,very thin,dashed] (0,-\i) -- (\quadw,\quadh-\i-\dd);
	}		
	\pgfmathsetmacro{\ll}{\quadw-1}
	\foreach \i in {1,...,\ll}
  {				
		\pgfmathsetmacro{\x}{min(\quadw-\i+\dd,\quadw)}
		\draw[-,Gray,very thin,dashed] (-\i,0) -- (\x,\quadh);
		\draw[-,Gray,very thin,dashed] (\i,-\quadh) -- (\quadw,-\i-\dd);
	}
	
	\pgfmathsetmacro{\ll}{\quadw-1}
	\foreach \i in {-\quadw,...,\ll}
  {		
		\draw[-,Gray,very thin,dashed] (\i,\quadh) -- (\quadw,\i);
	}
	\pgfmathsetmacro{\ll}{\quadw-1}
	\foreach \i in {1,...,\ll}
  {		
		\draw[-,Gray,very thin,dashed] (-\i,0) -- (-\quadw,\quadh-\i);
		\draw[-,Gray,very thin,dashed] (0,-\i) -- (\quadw-\i,-\quadh);
	}

	\draw [<->] (0,-\quadh) -- (0,\quadh);
	\draw [<->] (-\quadw,0) -- (\quadw,0);

	\def\xe{0}
	\def\ye{0}
	\coordinate (current point) at (0,0);
	\draw[-,color=celestialblue,line width=2] (0,0) 
	\foreach \x/\y in {1/0,
	                   1/1, 
	                   -1/1,
	                   -1/0,
	                   0/-1,
										 1/0,
	                   1/1,
	                   -1/1,
	                   -1/1,
	                   -1/-1,
										 0/-1,
	                   -1/-1,
										 1/-1,
										 0/1,
	                   -1/1,
	                   -1/-1,
	                   -1/0,
										 -1/0,
										 0/-1,
										 1/0,
										 0/1,
										 0/1,
										 -1/1,
										 0/1,
										 1/1,
										 1/-1,
										 1/1,
										 1/-1,
										 1/1,
										 1/0,
										 1/0,
										 1/-1,
										 1/-1,
										 1/-1,
										 -1/-1,
										 -1/-1,
										 0/-1,
										 -1/-1,
										 -1/0,
										 -1/0,
										 0/-1,
										 0/-1,
										 1/1,
										 0/1,
										 0/1,
										 1/0,
										 1/-1,
										 1/-1,
										 1/1
										 }
	{	
		-- ++(\x,\y)
%
%
%
	}	;
	
	\newcommand*{\dis}{0.2}
	\newcommand{\drawcrossing}[2]{
	\coordinate (a) at #1;
	\coordinate (b) at #2;
	\draw[-,white,decorate,double=celestialblue,line width=2, double distance=2] {($(a)!0.2!(b)$) -- ($(a)!0.8!(b)$)};
	};
	
	\drawcrossing{(1,1)}{(2,2)};
	\drawcrossing{(-1,1)}{(-2,2)};
	\drawcrossing{(-4,0)}{(-4,2)};
	\drawcrossing{(2,-1)}{(3,-2)};
	\drawcrossing{(1,-3)}{(1,-1)};
		
		\draw [->,line width=2, color=celestialblue] (5,-2) -- (6,-1);

	\coordinate (cross) at (5,5);
	\foreach \x/\y in { 1/0, 1/1, 0/1, -1/1, -1/0, -1/-1, 0/-1, 1/-1 }
	{	
		\draw[->, line width=2] (cross) -- ($(cross)+(\x,\y)$);
	}
\end{tikzpicture}

	\caption{A king walk in the three-quadrant plane $\Cc$. The associated generating function is D-finite and transcendental (i.e., non-algebraic).}
	\label{fig:king}
      \end{figure}

It became then natural to explore more three-quadrant problems, in
particular to understand whether the D-finiteness of $C(x,y;t)$ was
again related to the finiteness of the associated group -- at least
for the $74$ three-quadrant problems that are not equivalent to a
half-plane problem; see Section~\ref{sec:interestingmodels}. Using an asymptotic argument, Mustapha quickly proved that the~$51$ three-quadrant problems associated with an infinite group have, like their quadrant counterparts, a non-D-finite solution~\cite{Mustapha2019Walks}. Regarding exact solutions,  Raschel and Trotignon obtained in~\cite{RaschelTrotignon2018Avoiding} sophisticated integral expressions for eight step sets. Four of them have a finite group (namely $\{\rightarrow, \uparrow, \leftarrow, \downarrow\}$, $\{\nearrow, \leftarrow,  \downarrow \}$, $\{\rightarrow, \uparrow, \swarrow \}$, and $\{\rightarrow, \nearrow, \uparrow, \leftarrow, \swarrow, \downarrow \}$), and these expressions imply that they are D-finite (at least in $x$ and $y$).
In fact, the latter three  are now known  to be
algebraic~\cite{mbm-tq-kreweras}. The other four have an infinite group
and have been further studied by Dreyfus and Trotignon: one of them is
D-algebraic, the other three are
not~\cite{dreyfus-trotignon}. Furthermore, the remarkable results of
Budd~\cite{Budd2020Winding} and Elvey
Price~\cite{ElveyPrice2020Winding} on the winding number of various
families of plane walks provide explicit D-finite expressions for
several \gfs\ of three-quadrant walks  starting and ending close to
the origin, in particular  for step sets
$\{\rightarrow,\nearrow, \leftarrow, \swarrow\}$
and
$\{\rightarrow,  \nwarrow, \leftarrow, \searrow \}$
in Budd's paper, as well as
$\{\uparrow,  \leftarrow, \searrow \}$
and
$\{\rightarrow, \uparrow, \nwarrow, \leftarrow, \downarrow, \searrow\}$
in Elvey Price's\footnote{In both papers, when steps $\nwarrow$ or $\searrow$ are allowed, one includes in the enumeration walks using jumps from $(-1,0)$ to $(0,-1)$ and vice-versa. Such jumps are forbidden in this paper, but we show in the last section that allowing them in king walks does not significantly modify the form of our results.}.

\medskip
\noindent{\bf Main results.} 
In this paper we enrich the collection of completely solved cases with
the {\emph{king walks}}, in which all eight nearest neighbour steps $\rightarrow, \nearrow, \uparrow, \nwarrow, \leftarrow, \swarrow,
\downarrow, \searrow $ are allowed; see Figure~\ref{fig:king}. This is again a finite group model, and the series $Q(x,y;t)$ is a well-understood D-finite series~\cite{bomi10}.
Here we determine $C(x,y;t)$, and show that the algebraicity phenomenon of~\cite{Bousquet2016} persists: the series  $C(x,y;t)$ differs from the linear combination~\eqref{Q-combin} by an algebraic series, this time of degree $216$.
For the simple and diagonal walks of~\cite{Bousquet2016} this
algebraic series was of degree $72$ ``only''. The \gf \ $C_{i,j}$ of
walks ending at a prescribed position $(i,j)$ differs from a series 
of the form $\pm Q_{k,\ell}/3$
by an algebraic series of degree at most $24$ (while
this degree was bounded by $8$ in  the two models
of~\cite{Bousquet2016}).

Moreover, we explain why we expect a similar property to hold  (with variations on the linear combination~\eqref{Q-combin} of series~$Q(\cdot, \cdot)$)
for the seven models of Table~\ref{tab:weyl}. These are precisely the models  for which the quadrant
problem can be solved using the reflection principle~\cite{gessel-zeilberger}, and for this reason we call them \emm Weyl models,. We predict  the relevant linear combination of series~$Q$, and we give evidence of the algebraicity phenomenon for the three rightmost models of Table~\ref{tab:weyl}.
However, we also expect the effective solution of these models to be challenging in computational terms,  because the relevant algebraic series will most likely have  very large degrees.

 \begingroup
\setlength{\tabcolsep}{4pt} 

\begin{table}
  \centering
  \begin{tabular}{l|c|c|c|c}
    model $\cS$&
 \begingroup
                 \setlength{\tabcolsep}{1pt}
                 \begin{tabular}{cccc}
              \includegraphics[height=0.8cm]{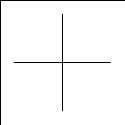}&\includegraphics[height=0.8cm]{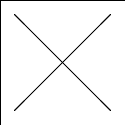} &\includegraphics[height=0.8cm]{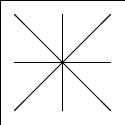}&\includegraphics[height=0.8cm]{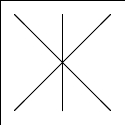}\\
              \small{simple}& \small{diagonal}& \small{king} &\small{diabolo }
                                       \end{tabular}
                                       \endgroup
           &
             \begin{tabular}{cc}
               \includegraphics[height=0.8cm]{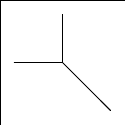}&\includegraphics[height=0.8cm]{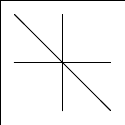} \\
                \small{tandem} &\small{double-tandem}
\\
             \end{tabular}
          &
            \begin{tabular}{c}
     \includegraphics[height=0.8cm]{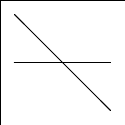} \\
              \small{Gouyou-Beauchamps}
                      \end{tabular}
  \\
       \hline
    group $G$ & \begin{minipage}{3cm}
      $(x,y)$,$(\bar x, y)$,\\
      $(\bar x, \bar y)$,$(x, \bar y)$
        \end{minipage} & 
 \begin{minipage}{3cm}                                                                                  $(x,y), (\bar x y,y)$,\\  
$(\bar x y, \bar x), (\bar y, \bar x)$,\\
$(\bar y, x\bar y ), (x,x\bar y )$  
\\
\end{minipage}                                                                                                                       &  \begin{minipage}{3cm}
\mbox{}\\$(x, y), (\bar x y, y),$\\ $(\bar x y, {\bar x}^2 y), (\bar x, {\bar x}^2y)$,\\
$ (\bar x, \bar y), (x\bar y, \bar y),$\\$(x\bar y, x^2\bar y), (x,x^2\bar y )$\\
\end{minipage}\\
    new steps &
     \includegraphics[height=0.8cm]{10101010}\hskip 3mm\includegraphics[height=0.8cm]{01010101} \hskip 2mm\includegraphics[height=0.8cm]{11111111}\hskip 3mm\includegraphics[height=0.8cm]{11011101}& \includegraphics[height=0.9cm]{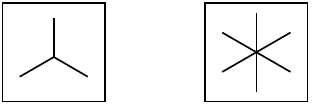}&  \includegraphics[height=0.8cm]{10101010}\\
   \begin{minipage}{2cm}
 \vskip -36mm    Weyl\\ chambers  
    \end{minipage}
  &\scalebox{0.7}{\input{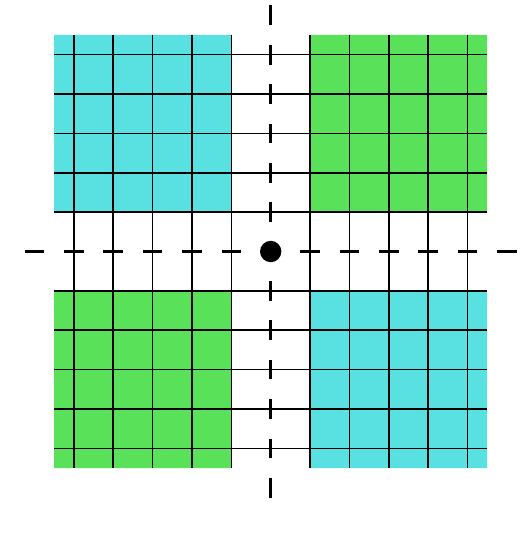_t}} & \scalebox{0.7}{\begin{picture}(0,0)%
\includegraphics{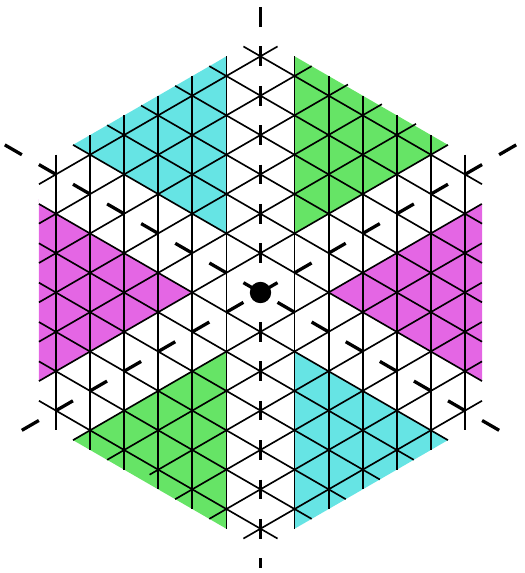}%
\end{picture}%
\setlength{\unitlength}{4144sp}%
\begingroup\makeatletter\ifx\SetFigFont\undefined%
\gdef\SetFigFont#1#2#3#4#5{%
  \reset@font\fontsize{#1}{#2pt}%
  \fontfamily{#3}\fontseries{#4}\fontshape{#5}%
  \selectfont}%
\fi\endgroup%
\begin{picture}(2382,2609)(836,-2063)
\put(2612,-1884){\makebox(0,0)[lb]{\smash{{\SetFigFont{14}{16.8}{\familydefault}{\mddefault}{\updefault}{\color[rgb]{0,0,0}$A_2$}%
}}}}
\end{picture}%
} &  \scalebox{0.7}{\begin{picture}(0,0)%
\includegraphics{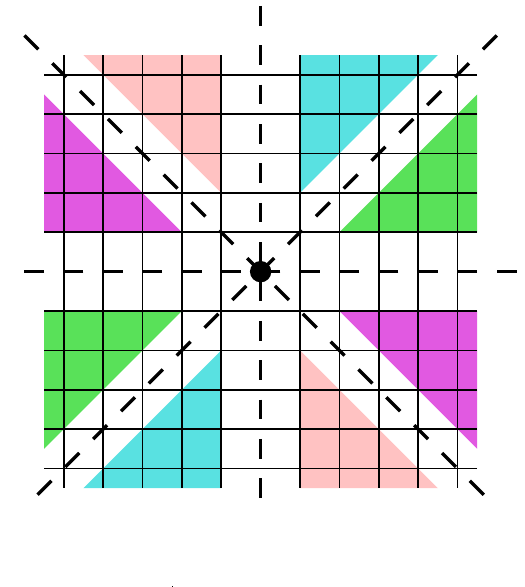}%
\end{picture}%
\setlength{\unitlength}{4144sp}%
\begingroup\makeatletter\ifx\SetFigFont\undefined%
\gdef\SetFigFont#1#2#3#4#5{%
  \reset@font\fontsize{#1}{#2pt}%
  \fontfamily{#3}\fontseries{#4}\fontshape{#5}%
  \selectfont}%
\fi\endgroup%
\begin{picture}(2384,2678)(1239,-1952)
\put(3151,-1771){\makebox(0,0)[lb]{\smash{{\SetFigFont{14}{16.8}{\familydefault}{\mddefault}{\updefault}{\color[rgb]{0,0,0}$B_2$}%
}}}}
\end{picture}%
} 
  \end{tabular}
  \vskip 3mm
  \caption{The seven Weyl models, with their usual names and  groups (defined in Section~\ref{sec:group}). The next rows show how to deform the steps and the plane so that walks in the first quadrant correspond to walks in a Weyl chamber, and walks avoiding the negative quadrant to walks avoiding a Weyl chamber.}
  \label{tab:weyl}
\end{table}
\endgroup

\medskip\noindent
{\bf Outline of the paper.} 
We begin in Section~\ref{sec:general} with  generalities on the enumeration  of walks with small steps confined to the three-quadrant cone $\Cc$, and on the related functional equations. We describe the $74$ (essentially distinct) models of interest -- those that are not equivalent to a half-plane model -- and define the group associated with a model. In Section~\ref{sec:OS} we state and justify our conjecture on the form of $C(x,y;t)$ in the seven Weyl cases. The next three sections are devoted to the solution of the king model in three quadrants: in Section~\ref{sec:KingResults} we state our results in details,  and we prove them in  Sections~\ref{sec:KingEquation} and~\ref{sec:quadratic}. In Section~\ref{sec:combi}, we give combinatorial proofs, based on the reflection principle, of some identities obtained so far via functional equations, and generalize them to all seven Weyl models. In Section~\ref{sec:final} we conclude with a few comments, in particular about what happens to the king model when one allows steps between $(0,-1)$ and $(-1,0)$.

This paper is the full version of an extended abstract~\cite{mbm-wallner-AofA} that was published in the proceedings of the \emph{Analysis of Algorithms} Conference in 2020.

\section{Enumeration in the three-quadrant plane: basic tools}
\label{sec:general}

Let us begin with  some definitions and notation on \fps.
Let $\GA$ be a commutative ring and $x$ an indeterminate. We denote by
$\GA[x]$ (resp.~$\GA[[x]]$) the ring of polynomials (resp.~\fps) in $x$
with coefficients in $\GA$. If $\GA$ is a field,  then $\GA(x)$ denotes the field
of rational functions in $x$, and $\GA((x))$ the
field of Laurent series in $x$, that is, series of the form
$ \sum_{n \ge n_0} a_n x^n$,
with $n_0\in \zs$ and $a_n\in \GA$.  This notation is generalized to polynomials, fractions, and series in several indeterminates.
 The coefficient of $x^n$ in a   series $F(x)$ is denoted by
 $[x^n]F(x)$. We denote partial derivatives with indices: for instance, for a series $F$ involving the indeterminate $x$, we write $F_x$ for $\partial F/\partial x$.

 We denote {with} bars the reciprocals of variables: that is, $\bx=1/x$,
so that $\GA[x,\bx]$ is the ring of Laurent polynomials in $x$ with
coefficients in $\GA$.

We will often handle series of $\qs(x)((t))$, and consider $\qs(x)$ as a subring of $\qs((x))$ (that is, we expand rational functions in $x$ around $x=0$). For $F(x;t) \in \qs((x))((t))$, of the form
\[
F(x;t)= \sum_{n\ge n_0} t^n  \sum_{i \ge i_0(n)} a(n, i) x^i,
\]
the \emm non-negative part of $F$ in, $x$ is the
following series in $t$ and $x$:
\[
  [x^{\geq}]F(x;t)= \sum_{n\ge n_0} t^n  \sum_{i \ge \max(0, i_0(n))} a(n, i) x^i.
\]
We define analogously the \emm positive part, of $F$, denoted by $[x^>]F$. The \emm negative part, of $F$ is $[x^<]F:=F-[x^\ge ]F$. Observe that, if $F(x;t) \in \qs(x)((t))$, this convention makes the roles of $x$ and $1/x$ non-symmetric: the negative part of $F(x)$ is  not always obtained by inverting $x$ in the positive part of $F(\bx)$. For instance, if we take $F(x)=1/(1+x)$, its negative part is $0$, but $F(\bx)=1/(1+\bx)=x/(1+x)$ has a non-trivial positive part. However, if $F(x;t) \in \qs[x, \bx]((t))$, then the expected symmetry holds.

If $\GA$ is a field,  a power series $F(x) \in \GA[[x]]$
  is \emm algebraic, (over $\GA(x)$) if it satisfies a
non-trivial polynomial equation $P(x, F(x))=0$ with coefficients in
$\GA$. Otherwise it is \emm transcendental,. It is \emm differentially finite, (or \emm D-finite,) if it satisfies a non-trivial linear
differential equation with coefficients  in $\GA(x)$. For
multivariate series, D-finiteness requires the
existence of a differential equation \emm in each variable,.  {We
  refer to~\cite{Li88,lipshitz-df} for general results on D-finite series.}

 We usually omit the dependency in $t$ of our series, writing for instance $C(x,y)$ for $C(x,y;t)$.
For a series $F(x,y) \in \qs[x, \bx, y, \by][[t]]$ and two integers
$i$ and $j$, we denote by $F_{i,j}$ the coefficient of $x^i y^j$ in $F(x,y)$. 
This is a series in $\qs[[t]]$. We also denote
\beq \label{Fhv}
  F_{-,0}(\bx) =  \sum_{i<0}F_{i,0}\, x^i \quad\text{and} \quad
           F_{0,-}(\by) =  \sum_{j<0}F_{0,j}\, y^j .
\eeq
These two series lie in $\bx\qs[\bx][[t]]$ and $\by\qs[\by][[t]]$, respectively.

\subsection{A functional equation}

We fix a subset $\cS$ of $\{-1,0, 1\}^2\setminus\{(0,0)\}$ and we consider
walks with steps in $\cS$ that start from 
$(0,0)$ and  remain in the cone $\Cc$ defined by~\eqref{Ccone-def}.
By this, we mean that not only must every vertex of the walk
lie in $\Cc$, but also every edge: a walk containing a step from
$(-1,0)$ to $(0,-1)$ (or vice versa) does not lie in $\Cc$. We often say
for short that our walks \emm avoid the negative quadrant,. The \emm step polynomial, of $\cS$ is defined by
\beq\label{eq:steppolynomial}
  S(x,y)=\sum_{(i,j)\in \cS} x^i y^j%
  = \by H_-(x)+ H_0(x) +y H_+(x)
  = \bx V_-(y) + V_0(y) +x V_+(y),
\eeq
for some Laurent polynomials $H_-, H_0, H_+$ and $V_-, V_0, V_+$
recording horizontal and vertical displacements, respectively.
We denote by $C(x,y) \equiv C(x,y;t)$ the
\gf~\eqref{C-def} of walks confined to $\Cc$. In the expression~\eqref{C-def},  $c_{i,j}(n)$ is the number of walks of length $n$ that go from
$(0,0)$ to $(i,j)$ and
are confined to $\Cc$. 

Constructing walks confined to $\Cc$ step by step gives the following functional equation:
\[
C(x,y)= 1 +tS(x,y) C(x,y) -t\by H_-(x) C_{-,0}(\bx) -t\bx V_-(y)C_{0,-}(\by) -t\bx\by
 C_{0,0} \mathbbm 1_{(-1,-1)\in \cS},
\]
where we have used the notation~\eqref{Fhv}. Note that the series $C_{-,0}(\bx)$ and $C_{0,-}(\by)$ count walks ending on  the horizontal and vertical boundaries of $\Cc$ (but not at $(0,0)$).
 On the right-hand side, the term $1$ accounts for the
 empty walk, the next term describes the extension of a walk in $\Cc$
 by one step of $\cS$, and each of the other three terms corresponds to
 a ``bad'' move, either starting from the negative $x$-axis, or from
 the negative $y$-axis, or from $(0,0)$. Equivalently,
  \beq\label{eqfunc-gen}
 K(x,y)C(x,y)= 1 -t\by H_-(x) C_{-,0}(\bx) -t\bx V_-(y)C_{0,-}(\by) -t\bx\by
 C_{0,0} \mathbbm 1_{(-1,-1)\in \cS},
 \eeq
 where $K(x,y):=1-tS(x,y)$ is the \emm kernel, of the equation.

As recalled in the introduction, the enumeration of walks confined to the non-negative quadrant $\Qc$ has been studied intensively over the last $20$ years.
 The associated \gf\ $Q(x,y)\equiv Q(x,y;t)\in \qs[x,y][[t]]$  defined in~\eqref{Q-def} satisfies a similarly looking equation~\cite[Lem.~4]{bomi10}:
 \beq\label{eqfunc-gen-qu}
 K(x,y)Q(x,y)= 1 -t\by H_-(x) Q(x,0) -t\bx V_-(y)Q(0,y) +t\bx\by
 Q(0,0) \mathbbm 1_{(-1,-1)\in \cS}.
 \eeq

  \begin{remark} In  two recent references dealing with the winding number of plane lattice walks~\cite{Budd2020Winding,ElveyPrice2020Winding}, it seems more natural to count walks in which all vertices lie in $\Cc$, but not necessarily all edges: this means that there may be steps form $(-1,0)$ to $(0,-1)$, and vice-versa. Counting these walks would add two terms to the right-hand side of~\eqref{eqfunc-gen}, namely
 \beq\label{ajout}
   t \by C_{-1,0} \mathbbm 1_{(1,-1)\in \cS} + t \bx C_{0,-1} \mathbbm 1_{(-1,1)\in \cS}.
   \eeq
   We discuss in the final section of the paper
   the enumeration of king walks in $\Cc$ when these two steps are allowed. The results are qualitatively the same as when they are forbidden.
    \end{remark}
 
\subsection{Interesting step sets}
\label{sec:interestingmodels}

As in the quadrant case~\cite{bomi10}, we can decrease the number of
step sets  that are worth being considered thanks to a few simple observations (\emm a priori,, there are $2^8$ of them):
\begin{itemize}
\item Since the cone $\Cc$ (as well as the quarter plane $\Qc$) is $x/y$-symmetric, the counting problems defined by $\cS$ and by its mirror image $\overline \cS:=\{(j,i): (i,j) \in \cS\}$ are equivalent; the associated \gfs\ are related by $\overline C(x,y)=C(y,x)$.
\item If all steps of $\cS$ are contained in the right half-plane $\{(i,j): i\ge 0\}$, then \emm all, walks with steps in $\cS$ lie in $\Cc$, and the series $C(x,y)=1/(1-tS(x,y))$ is simply rational. The series $Q(x,y)$ is known to be algebraic in this case~\cite{banderier-flajolet,bousquet-petkovsek-recurrences,Duchon98,gessel-factorization}.
\item If all steps of $\cS$ are contained in the left half-plane $\{(i,j): i\le 0\}$, then confining a walk to $\Cc$ is equivalent to confining it to the upper half-plane: the associated \gf\ is then algebraic, and so is $Q(x,y)$.
\item If all steps of $\cS$ lie (weakly) above the first diagonal
  ($i=j$), then confining a walk to~$\Cc$ is again equivalent to confining it to the upper half-plane: the associated \gf\ is then algebraic, and so is $Q(x,y)$.
\item  If all steps of $\cS$ lie (weakly) above the second diagonal ($i+j=0$), then all walks with steps in $\cS$ lie in $\Cc$, and $C(x,y)=1/(1-tS(x,y))$ is simply rational. In this case however, the series $Q(x,y)$ is not at all trivial~\cite{bomi10,Mishna-Rechni}. Such step sets are sometimes called \emm singular, in the framework of quadrant walks.
  \item 
   Finally, if all steps of $\cS$ lie (weakly) below the second diagonal, then a walk confined to~$\Cc$ moves for a while along the second diagonal, and then either stops there or leaves it into the NW or SE quadrant using 
    a South, South-West, or West step. 	It cannot leave the chosen quadrant anymore and behaves therein like a half-plane walk.  
	By polishing this observation, one can prove that $C(x,y)$ is algebraic (while $Q(x,y)=1$).
  \end{itemize}
  By combining these arguments, one finds that there are 
  $74$ essentially distinct models of walks avoiding the negative quadrant that are worth studying: the $79$ models considered for quadrant walks (see~\cite[Tables~1--4]{bomi10}) except the $5$ ``singular'' models for which all steps of~$\cS$ lie weakly above the diagonal $i+j=0$.

\subsection{The group of the model}
\label{sec:group}

One important tool in the systematic approach to quadrant walks
is a certain group $G$ of birational transformations associated with the step set $\cS$. It was introduced in~\cite{bomi10}, and is an algebraic variant of a group introduced much earlier in the study of \emm random, walks in the quadrant~\cite{fayolle-livre,flatto-hahn,malyshev}.
 
 We assume from now on that $\cS$ contains positive and negative steps in the horizontal and vertical directions (otherwise the problem degenerates, as explained above).  We  define  two bi-rational transformations $\phi$ and $\psi$, acting
on pairs  $(u,v)$ of coordinates (which will be, typically, rational functions of $x$ and $y$):
\[
  \phi: (u,v)\mapsto \left(\bu\, \frac {V_{-}(v)}{V_{+}(v)}, v\right)
  \qquad \hbox{and} \qquad
  \psi: (u,v)\mapsto \left(u, \bv\, \frac {H_{-}(u)}{H_{+}(u)}\right),
\]
where $H_-$, $H_+$, $V_-$, and $V_+$ are defined by~\eqref{eq:steppolynomial}. Each transformation fixes one coordinate, and transforms the other \emph{so as to leave the step polynomial $S(u,v)$,  defined by~\eqref{eq:steppolynomial}, unchanged.}
Note that  $\phi$ and $\psi$ are both involutions. The group $G$ is
the group generated by these two transformations. It is isomorphic to
a dihedral group of order $2n$, with $n\in\ns\cup
\{\infty\}$. The \emm length, of $g\in G$, denoted $\ell(g)$,
  is the smallest $\ell$ such that $g$ can be written as a product of
  $\ell$ generators $\phi$ and $\psi$. The \emm sign, of $g\in G$,
denoted $\vareps_g$, is defined by $\vareps_g = (-1)^{\ell(g)}$.
Note that for any $g \in G$, we have $S(g(x,y))=S(x,y)$.

Among the $74$ interesting models identified in the previous subsection, exactly $23$ have a finite group; see~\cite{bomi10}. For the remaining $51$ models, an asymptotic argument implies that the series $C_{0,0}$ that counts  walks ending at $(0,0)$ is not D-finite, which implies that $C(x,y;t)$ is not D-finite; see~\cite{Mustapha2019Walks}.  Among the $23$ models with a finite group,
\begin{itemize}
\item $16$ have a vertical symmetry (say) and a group of order $4$,
\item $5$ have a group of order $6$, and
\item $2$ have a group of order $8$.
\end{itemize}
These models and groups are listed in~\cite[Tables~1--3]{bomi10}. Another classification of these groups distinguishes the $7+4$ models with a \emm monomial, group (meaning that for every $g\in G$, the pair $g(x,y)$ consists of two Laurent monomials in $x$ and $y$), shown in Tables~\ref{tab:weyl} and~\ref{tab:zero} of this paper, from the $12$ non-monomial ones (Table~\ref{tab:non-monomial}).
 
\begin{table}[htb]
  \centering
  \begin{tabular}{c|c|c|c|c|c|c}
 $\cS$ &  \includegraphics[height=0.8cm]{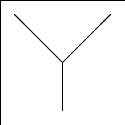} \includegraphics[height=0.8cm]{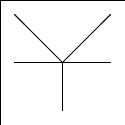}  &\includegraphics[height=0.8cm]{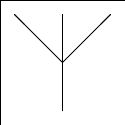} \includegraphics[height=0.8cm]{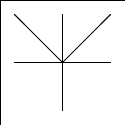} & \includegraphics[height=0.8cm]{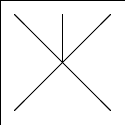} \includegraphics[height=0.8cm]{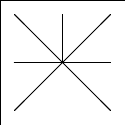}
    & \includegraphics[height=0.8cm]{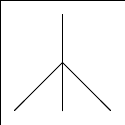} \includegraphics[height=0.8cm]{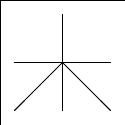}&\includegraphics[height=0.8cm]{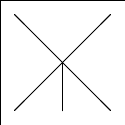} \includegraphics[height=0.8cm]{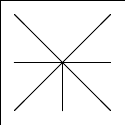}& \includegraphics[height=0.8cm]{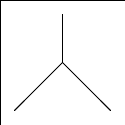} \includegraphics[height=0.8cm]{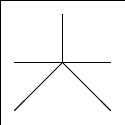}\\
    \hline
$G$ &   \begin{minipage}{20mm}
  $(x,y),(\bar x,y)$,\\ $(\bar x, \bar y \frac{1}{x+\bar x})$,\\ $(x,  \bar y \frac{1}{x+\bar x}) $        
\end{minipage}&
                \begin{minipage}{20mm}
$(x,y),(\bar x,y)$,\\
$(\bar x,\bar y\,\frac {1}{x+1+\bar x})$,\\
$(x,\bar y\,\frac {1}{x+1+\bar x}) $
\end{minipage}&
                \begin{minipage}{20mm}
$(x,y),(\bar x, y)$,\\  
$(\bar x, \bar y\, \frac {x+\bar x}{x+1+\bar x})$,\\ 
$(x, \bar y\, \frac {x+\bar x}{x+1+\bar x})$
\end{minipage}&
                \begin{minipage}{28mm}
$(x,y),(\bar x,y)$,\\
$(\bar x,\bar y\,(x+1+\bar x))$,\\
$(x,\bar y\,(x+1+\bar x))$\end{minipage}&
\begin{minipage}{20mm}$(x,y), (\bar x,y)$,\\ 
$(\bar x,\bar y\, \frac{x+1+\bar x}{x+\bar x})$,\\
$(x,\bar y\, \frac{x+1+\bar x}{x+\bar x})$\end{minipage}&
                                                          \begin{minipage}{22mm}$(x,y),(\bar{x},y)$,\\
$(\bar x,\bar{y}(x+\bar{x}))$,\\
$(x,\bar{y}(x+\bar{x}))$
\end{minipage}                   
  \end{tabular}
  \vskip 4mm
  \caption{The $12$ models with a finite non-monomial group.}
  \label{tab:non-monomial}
\end{table}

\section{A conjectural  form of \texorpdfstring{$C(x,y)$}{C(x,y)} for Weyl models}
\label{sec:OS}

It is shown in~\cite{bomi10} that  $19$ of   the $23$ quadrant problems with a finite group can be solved in a uniform manner by considering the \emm orbit sum, of $xy$,
where for any series $F(x,y)$ in $\qs(x,y)[[t]]$, we define the \emm orbit sum, of $F$ by
  \[
  \OS (F(x,y)):=  \sum_{g\in G}\varepsilon_g\, g(F(x,y)),
  \]
  with  $g(F(x,y)):=F(g(x,y))$. To explain the relevance of  orbit sums
  observe that the quadrant equation~\eqref{eqfunc-gen-qu}, once multiplied by $xy$, reads 
\[ 
K(x,y) xy Q(x,y)=    xy-R(x)-S(y),
\] 
  where the series $R(x)$ does \emm not, involve $y$ and the series $S(y)$  does  not involve $x$. Recall moreover that for every $g \in G$ we have $K(g(x,y))=1-tS(g(x,y))=1-tS(x,y)=K(x,y)$. By linearity this gives
  \begin{align} 
    K(x,y)   \sum_{g\in G} \varepsilon_g \, g(xyQ(x,y))&=  \OS (K(x,y)xyQ(x,y)) \nonumber\\
                                                &= \OS(xy) -\sum _{g\in G} \varepsilon_g \,g(R(x))-\sum _{g\in G} \varepsilon_g \,g(S(y)) \nonumber\\
    & = \OS(xy), \label{OS-Q}
  \end{align}
  since $\vareps_{g \circ \psi}=\vareps_{g \circ \phi}=- \vareps_g$ while  $g(R(x)) = (g \circ \psi)(R(x))$ and $g(S(y)) = (g \circ \phi)(S(y))$.
  Analogously, for walks confined to $\Cc$, the form of the functional equation~\eqref{eqfunc-gen} implies that:
 \beq\label{OS-C}
    K(x,y)   \sum_{g\in G} \varepsilon_g\,  g(xyC(x,y))=  \OS (xy).
    \eeq

    \begin{remark}
      If we decide to allow steps between $(-1,0)$ and $(0,-1)$ in the walks that we count, thus adding the terms~\eqref{ajout} to the right-hand side of the functional equation~\eqref{eqfunc-gen}, the orbit sum of $xy C(x,y)$ is still $\OS(xy)/K(x,y)$.
    \end{remark}
    
    \subsection{Vanishing orbit sums and algebraicity}
    \begin{table}[b]
      \centering
      \begin{tabular}{l|c|c|}
 model       &     \begin{tabular}{ccc}
              \includegraphics[height=0.8cm]{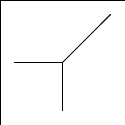}&\includegraphics[height=0.8cm]{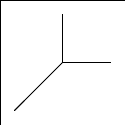} &\includegraphics[height=0.8cm]{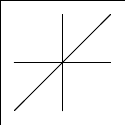}\\
              \small{Kreweras}& \small{reverse Kreweras}& \small{double-Kreweras}
                   \end{tabular}&
                                  \begin{tabular}{c}
                                    \includegraphics[height=0.8cm]{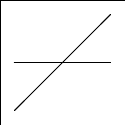}\\ \small{Gessel}
                                    \end{tabular}
        \\
        \hline
        group &\begin{minipage}{3cm}
$(x,y), (\bar x \bar y,y)$,\\ 
$(\bar x \bar y, x)$, $(y,x)$,\\
$(y,  \bar x \bar y),   (x,\bar x\bar y)$
 \end{minipage}                                                       &
    \begin{minipage}{3cm}
\mbox{}\\$(x, y), (\bar x \bar y, y),$\\ $(\bar x \bar y, x^2y), (\bar x, x^2y)$,\\
$ (\bar x, \bar y), (xy, \bar y),$\\ $(xy, {\bar x}^2 \bar y), (x, \bar y {\bar x}^2)$\\
\end{minipage}      \\
        new steps &    \includegraphics[height=0.9cm]{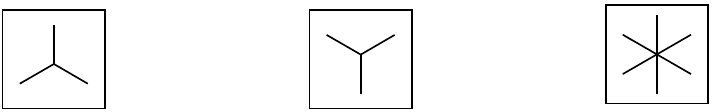} &      \includegraphics[height=0.8cm]{10101010}\\
   \begin{minipage}{2cm}
 \vskip -36mm    quadrant\\ walks
    \end{minipage}
 & \includegraphics[scale=0.7]{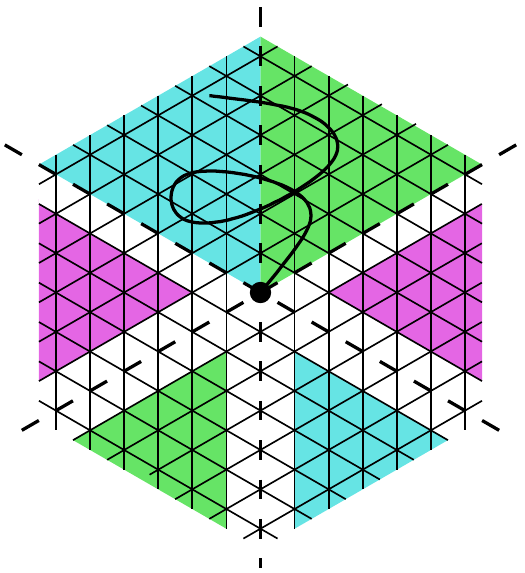} &  \includegraphics[scale=0.7]{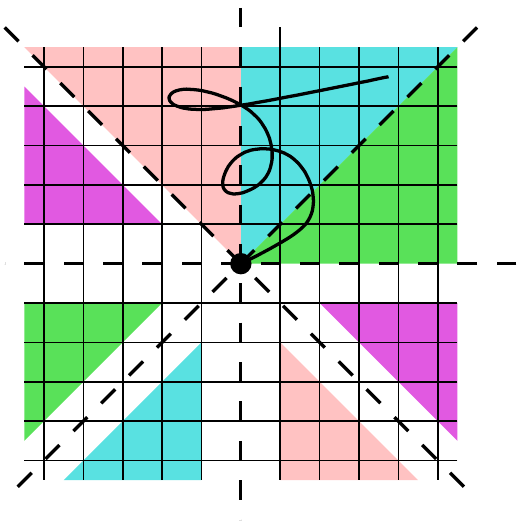}
      \end{tabular}
      \vskip 3mm
      \caption{The four models for which $\OS(xy)=0$, with their names and groups. After deformation, the first quadrant becomes a union of two/three
        Weyl chambers. The three-quadrant plane corresponds to the complement of this region.}
       \label{tab:zero}
    \end{table}

    As was first observed in~\cite{bomi10}, the orbit sum $\OS(xy)$ is zero for exactly four of the $23$ models with a finite group. The four models are shown in Table~\ref{tab:zero}. For each of them, one  has:
\[ 
      \sum_{g\in G} \varepsilon_g \, g(xyQ(x,y))=0=\sum_{g\in G} \varepsilon_g \, g(xyC(x,y)).
\] 
That is, the orbit sums of $xyQ(x,y)$  and $xyC(x,y)$ vanish. 
It is known that these four models are precisely those for which $Q(x,y)$ is algebraic.
One can derive the algebraicity from the fact that $\OS(xy)=0$~\cite{BeBMRa-16,BeBMRa-17,BostanKurkovaRaschel2016}. These derivations strongly suggest that, more generally, for any finite group model and any point $(a,b) \in \Qc$ such that $\OS(x^{a+1} y^{b+1})=0$, the \gf\ for walks in $\Qc$ starting from $(a,b)$ is algebraic. This is proved in some cases beyond the case $(a,b)=(0,0)$; see the discussion in~\cite[Sec.~7.2]{BeBMRa-17}. Note that the corresponding \gf\ $\tQ(x,y)$ is defined by
\[
  K(x,y)\tQ(x,y)= x^a y^b -t\by H_-(x) \tQ(x,0) -t\bx V_-(y)\tQ(0,y) +t\bx\by
   \tQ(0,0) \mathbbm 1_{(-1,-1)\in \cS}.
 \]
 By linearity, it is thus expected that for any polynomial $I(x,y) \in \qs[x,y]$ satisfying $\OS(xyI)=0$, the series $\tQ(x,y) \in \qs[x,y][[t]]$ defined by
 \[
  K(x,y)\tQ(x,y)= I(x,y) -t\by H_-(x) \tQ(x,0) -t\bx V_-(y)\tQ(0,y) +t\bx\by
   \tQ(0,0) \mathbbm 1_{(-1,-1)\in \cS}
 \]
 is algebraic.

Could a similar algebraicity phenomenon hold for three-quadrant problems? That is, could it be that, for any finite group model, and any Laurent polynomial $I(x,y) \in \qs[x,\bx,y,\by]$ having its support in $\Cc$ and satisfying $\OS(xyI)=0$, the series $\tC(x,y)$ defined by
 \beq\label{eq-tC}
  K(x,y)\tC(x,y)= I(x,y) -t\by H_-(x) \tC_{-,0}(\bx) -t\bx V_-(y)\tC_{0,-}(\by) -t\bx\by
   \tC_{0,0} \mathbbm 1_{(-1,-1)\in \cS}
 \eeq
 is algebraic? (Again, if $I(x,y)$ is reduced to a single monomial $x^ay^b$, then $\tC$ counts walks in~$\Cc$ starting from $(a,b)$.) Several results and guesses support this belief:
 \begin{enumerate}
 \item  It was conjectured in\cite{Bousquet2016} that for the four models of Table~\ref{tab:zero}, for which $\OS(xy)=0$,  the series $C(x,y)$ that counts walks in $\Cc$ starting from $(0,0)$ is algebraic. This was mostly based  on a guessed polynomial equation satisfied by $C_{0,0}$. The algebraicity of $C(x,y)$ (and $C_{0,0}$) is now proved for the first three of these models~\cite{mbm-tq-kreweras}, with explicit algebraic expressions. Moreover, Theorem~23 in~\cite{Budd2020Winding}, taken with $\alpha=0$, $\beta_-=-\pi/2$, and $\beta_+=3\pi/4$, proves that $C_{0,0}$ is algebraic as well for the fourth model.
   
 \item For the simple square lattice model  $\cS=\{\rightarrow,\uparrow, \leftarrow, \downarrow\}$, the orbit of $(x,y)$ under the action of $G$ is $\{(x,y), (\bx, y), (x, \by),(\bx,\by)\}$. The orbit sum $\OS(xy)$ is non-zero, and the series $C(x,y)$ is not algebraic. But it was proved that for walks starting at $(-1,0)$, that is, for $I(x,y)=\bx$, the \gf\ of walks confined to $\Cc$ \emm is, algebraic~\cite[Thm.~6]{Bousquet2016}, and one observes that $\OS(xyI)=\OS(y)=0$. Moreover, the \gf\ of walks in $\Cc$ starting from $(-1,b)$ and ending at $(0,0)$ is also algebraic~\cite[Cor.~2]{Bousquet2016},
   and for $I(x,y)= \bx y^b$ it also holds that $\OS(xyI)=0$.
   
 \item\label{item3} Still for the square lattice model, the heart of the derivation of $C(x,y)$ in\cite{Bousquet2016} is to prove that the series $\tC(x,y)$ defined by~\eqref{eq-tC} with $I(x,y)= (2+\bx^2+\by^2)/3$ is algebraic. Observe that $\OS(xyI)=0$.
   
 \item Similar results hold for the diagonal model $\cS=\{\nearrow, \nwarrow, \swarrow, \searrow \}$, which has the same group as the square lattice model. More precisely, for $I(x,y)=(2+\bx^2+\by^2)/3$ the series $\tC(x,y)$ is algebraic~\cite[Thm.~4]{Bousquet2016}, and moreover walks confined to $\Cc$ starting at $(-1,b)$ and ending at $(0,0)$ have an algebraic \gf~\cite[Cor.~5]{Bousquet2016}.
   
   \item Finally, it is conjectured by Raschel and
     Trotignon~\cite[p.~9]{RaschelTrotignon2018Avoiding} that for any
     finite group model, walks in $\Cc$ starting from $(-1,b)$ (or
     $(b,-1)$)  have an algebraic \gf. This series
     satisfies~\eqref{eq-tC} with again $I(x,y)= \bx y^b$, and thus $\OS(xyI)=0$.
    \end{enumerate}
   We could add to this list more algebraicity results for walks with
   a fixed endpoint confined to certain cones;
   see~\cite[Thm.~23]{Budd2020Winding}
   and~\cite[Cor.~4]{ElveyPrice2020Winding}. In these two papers
   steps between $(-1,0)$ and $(0,-1)$ are allowed (when they belong
   to $\Cc$), and thus the series under consideration obey slightly
   different functional equations. We also refer
     to~\cite{buchacher-kauers-trotignon} for more exotic series with
      zero orbit sums  that are \emm not, algebraic.

\subsection{Vanishing orbit sums for three-quadrant walks}
 As mentioned in item~\eqref{item3} above, the heart of the derivation of $C(x,y)$ for the simple square lattice is the solution of~\eqref{eq-tC} with $I(x,y)= (2+\bx^2+\by^2)/3$.  Recall the associated series $xy\tC$ has orbit sum zero. What is then the connection between $\tC(x,y)$ and the series $C(x,y)$ we are interested~in?

 It is not hard to construct, for any finite group model,  a series $\tC(x,y)$ related to $C(x,y)$ such that  $xy\tC$  has orbit sum zero. In sight of the functional equations~\eqref{OS-Q} and~\eqref{OS-C}, an obvious choice is $\tC(x,y):=C(x,y)-Q(x,y)$. However, we would also like $\tC(x,y)$ to be characterized by a functional equation resembling~\eqref{eq-tC},
 in which every unknown series  is explicitly described as a sub-series of $\tC(x,y)$. But this is not the case for the above choice of $\tC$.  Indeed, if for instance we take $\cS=\{\rightarrow, \uparrow, \leftarrow, \downarrow\}$, we have
\beq\label{eqQ-square}
  K(x,y) Q(x,y) = 1-t\by Q(x,0) -t\bx Q(0,y) ,
\eeq
\beq\label{eqC-square}
  K(x,y) C(x,y) = 1-t\by C_{-,0}(\bx) -t\bx C_{0,-}(\by) ,
\eeq
and, if we take $\tC(x,y):=C(x,y)-Q(x,y)$,  we obtain, by extracting terms of the forms $x^iy^0$ and $x^0y^i$ with $i<0$:
\[
  C_{-,0}(\bx)=  \tC_{-,0}(\bx),  \qquad C_{0,-}(\by) =\tC_{0,-}(\by),
\]
so that
\[
  K(x,y) \tC(x,y)= -t\by \tC_{-,0}(\bx) -t\bx \tC_{0,-}(\by) +t\by Q(x,0) +t\bx Q(0,y) ,
\]
which does not look very encouraging. However, we can get more leeway as follows.  Observe that for any model $\cS$ with a  finite group $G$, any $h \in G$, and any series $F\in \qs(x,y)[[t]]$, we have $\OS (h(F ))=\varepsilon_h \OS  ( F)$. Therefore, for any $h\in G$, the series $ \varepsilon_h h(xyQ(x,y))$ has the same orbit sum as $xy Q(x,y)$ and $xy C(x,y)$. Consequently, for any collection of real numbers  $\lambda_h, h \in G$ such that $\sum_h \lambda_h=1$, the series 
\beq\label{A-def-gen}
        \tC(x,y) := C(x,y) - \bx\by \sum _{h \in G} \varepsilon_h \lambda_h h( xyQ(x,y))
\eeq
is such that $xy \tC$ has vanishing orbit sum. Can we choose the $\lambda_h$ such that $\tC(x,y)$ is defined by an equation not involving $Q$? Returning to the square lattice case,  if we choose
\[
  \tC(x,y):=C(x,y) - \frac{\bx\by}3\big( xy Q(x,y)- \bx y Q(\bx,y) -x\by Q(x,\by)\big),
\]
we obtain
\[
  C_{-,0}(\bx)=  \tC_{-,0}(\bx) - \frac {\bx^2} 3\, Q(\bx,0), \quad
    C_{0,-}(\by)=  \tC_{0,-}(\by) - \frac {\by^2} 3\, Q(0,\by), \quad
  \]
  and the combination of~\eqref{eqQ-square} (written for $(x,y)$, $(\bx,y)$, and $(x,\by))$ and~\eqref{eqC-square} results in the following simple equation:
  \[
    K(x,y) \tC(x,y)= \frac{2+\bx^2+\by^2}3 -t\by \tC_{-,0}(\bx) -t\bx \tC_{0,-}(\by),
  \]
  which involves no specialization of $Q$. The following proposition tells us that a similar choice exists for any of the Weyl models of Table~\ref{tab:weyl}. In fact, it is easy to check that this choice is always unique.

\begin{prop}\label{prop:A-def-gen}
  Let $\cS$ be one of the Weyl models shown in Table~\ref{tab:weyl}. Let $2d$ be the order of the associated group $G$. Then $d=2, 3$ or $4$. Let $\omega:=\phi\psi\phi \cdots$ (with $d$ generators) be the only element of length $\ell(\omega)=d$ in $G$.
Define
\begin{align}
      A(x,y) &= C(x,y) - \frac{\bx \by} {2d-1} \sum_{h\in G\setminus\{\omega\}} \varepsilon_h\, h(xyQ(x,y)) \label{Aexpr-1}\\
      &= C(x,y) -\frac{\bx \by} {2d-1} \left( \frac{\OS(xy)}{K(x,y)} - \varepsilon_\omega\,  \omega(xyQ(x,y))\right).\label{Aexpr-2}
\end{align}
Then $xyA(x,y)$ has orbit sum zero, and is characterized by the following equation:
\begin{align}
  \label{A-func-eq}
  \begin{aligned}
    K(x,y) A(x,y)&= 1 -\frac {\bx\by}{2d-1} \left(
  \OS(xy)-(-1)^d\bx\by\right)\\
& \quad -t\by H_-(x) A_{-,0}(\bx) -t\bx V_-(y)A_{0,-}(\by) -t\bx\by A_{0,0} \mathbbm 1_{(-1,-1)\in \cS}.
\end{aligned}
\end{align}
\end{prop}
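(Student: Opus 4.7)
\emph{Proof plan.} The statement packages three claims, which I would address in turn. First, the equivalence of~\eqref{Aexpr-1} and~\eqref{Aexpr-2} is immediate from the orbit-sum identity~\eqref{OS-Q}: since $\sum_{g\in G}\varepsilon_g\,g(xyQ)=\OS(xy)/K(x,y)$, isolating the term $g=\omega$ gives $\sum_{h\ne\omega}\varepsilon_h\,h(xyQ) = \OS(xy)/K - \varepsilon_\omega\,\omega(xyQ)$, and substituting into~\eqref{Aexpr-1} yields~\eqref{Aexpr-2}, using $\varepsilon_\omega=(-1)^d$. Second, to show that $xyA$ has vanishing orbit sum, I would rely on the fact that the Coxeter length induces a sign character $\varepsilon\colon G\to\{\pm1\}$, so that the change of variable $g\mapsto gh$ in the definition of $\OS$ gives $\OS(h(F))=\varepsilon_h\OS(F)$ for every $h\in G$ and every series $F$. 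Applying this to each term of~\eqref{Aexpr-1},
\[
\OS(xyA) \ =\ \OS(xyC)\ -\ \frac{1}{2d-1}\sum_{h\ne\omega}\varepsilon_h^2\,\OS(xyQ)\ =\ \OS(xyC)-\OS(xyQ),
\]
which vanishes by~\eqref{OS-Q} and~\eqref{OS-C}.

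For the functional equation~\eqref{A-func-eq}, the plan is to multiply~\eqref{Aexpr-2} by $K(x,y)$; using the $G$-invariance of $K$ one has $K\cdot\omega(xyQ)=\omega(K\cdot xyQ)$, so
\[
KA \ =\ KC \ -\ \frac{\bx\by}{2d-1}\OS(xy)\ +\ \frac{(-1)^d\bx\by}{2d-1}\omega(K\cdot xyQ).
\]
I would then substitute~\eqref{eqfunc-gen} for $KC$ and the identity $K\cdot xyQ = xy - tx H_-(x)Q(x,0) - ty V_-(y)Q(0,y) + tQ(0,0)\mathbbm 1_{(-1,-1)\in\cS}$ obtained by multiplying~\eqref{eqfunc-gen-qu} by $xy$. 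The key simplification is that $\psi$ fixes any series $F(x)$ depending only on $x$, so the pairing $h\leftrightarrow h\psi$ is a fixed-point-free, sign-reversing involution on $G$, whence $\sum_{h\in G}\varepsilon_h\,h(F(x))=0$. This gives $\sum_{h\ne\omega}\varepsilon_h\,h(xH_-(x)Q(x,0)) = -\varepsilon_\omega\,\omega(xH_-(x)Q(x,0))$, and an analogous identity (using $\phi$-invariance) handles the $y$-series. Likewise $\sum_{h\ne\omega}\varepsilon_h=-\varepsilon_\omega$ disposes of the constant $Q(0,0)$-term.

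Collecting the $t^0$-contributions uses the fact $\omega(xy)=\bx\by$, which I would verify case by case on the three Weyl groups: in every case the longest element acts antipodally on the character lattice, sending $(x,y)$ to either $(\bx,\by)$ (for $A_1\times A_1$ and $B_2$) or $(\by,\bx)$ (for $A_2$). This reproduces the prescribed monomial part $1-\frac{\bx\by}{2d-1}(\OS(xy)-(-1)^d\bx\by)$. The remaining $t^1$-contributions collapse into $-\frac{(-1)^d t\bx\by}{2d-1}\bigl[\omega(xH_-(x)Q(x,0))+\omega(yV_-(y)Q(0,y))-Q(0,0)\mathbbm 1_{(-1,-1)\in\cS}\bigr]$, which must match $-t\by H_-(x)(A_{-,0}-C_{-,0})-t\bx V_-(y)(A_{0,-}-C_{0,-})-t\bx\by(A_{0,0}-C_{0,0})\mathbbm 1_{(-1,-1)\in\cS}$. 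The main obstacle is this final matching, which is not purely uniform: depending on whether $\omega$ preserves or swaps the two coordinates, the image $\omega(xH_-(x)Q(x,0))$ lives in $\qs[\bx][[t]]$ (for $A_1\times A_1$ and $B_2$) or in $\qs[\by][[t]]$ (for $A_2$), and therefore contributes to the horizontal or to the vertical boundary series of $A$. One may close the argument either by extracting the boundary coefficients of $A-C$ directly from~\eqref{Aexpr-1} in each case (as a sanity check, for the simple model this gives $A_{-,0}(\bx)-C_{-,0}(\bx)=\bx^2 Q(\bx,0)/3$, matching the right-hand side above), or, more cleanly, by invoking the uniqueness of the formal-power-series solution to a kernel equation of the shape~\eqref{A-func-eq}: once the monomial part of $KA$ is fixed, the unknowns $A_{-,0},A_{0,-},A_{0,0}$ are determined, so it suffices to verify that the above $\omega$-computation is self-consistent.
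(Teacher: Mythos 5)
Your treatment of the equivalence of~\eqref{Aexpr-1} and~\eqref{Aexpr-2} and of the vanishing of $\OS(xyA)$ coincides with the paper's argument, and your strategy for~\eqref{A-func-eq} --- multiply~\eqref{Aexpr-2} by $K$, substitute the kernel equations~\eqref{eqfunc-gen} and~\eqref{eqfunc-gen-qu} for $KC$ and $K\cdot\omega(xyQ)$, then reconcile the boundary contributions --- is also the paper's, down to the observation that $\omega(xy)=\bx\by$ in all three groups and that for $d=3$ the $\omega$-image of the $x$-boundary data of $Q$ becomes a $y$-series.

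The gap is the final reconciliation, which is precisely where the Weyl hypothesis enters and which you leave as a one-model sanity check plus an appeal to uniqueness. Uniqueness cannot close the argument: it says at most one formal-power-series solution of~\eqref{A-func-eq} exists, but showing that the $A$ of~\eqref{Aexpr-1} is that solution is exactly the matching you skipped. To complete the argument one must (a) extract from~\eqref{Aexpr-1} the corrections $A_{-,0}(\bx)-C_{-,0}(\bx)$, $A_{0,-}(\by)-C_{0,-}(\by)$, and $A_{0,0}-C_{0,0}$ --- for each of them exactly one group element $h\ne\omega$ pushes $\bx\by\,h(xyQ)$ onto the relevant boundary, yielding a single shifted specialization of $Q$, with $Q(\bx,0)$-type terms for $d\in\{2,4\}$ but $Q(0,\bx)$-type terms for $d=3$ --- and (b) check that these exactly cancel the $\omega$-kernel contribution. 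This cancellation is not automatic: it requires $\bx^{d-2}H_-(x)=H_-(\bx)$ and $\by^{m-2}V_-(y)=V_-(\by)$ (with $m=2$ for $d=2$, $m=3$ for $d=4$) when $d\in\{2,4\}$, and $\bx H_-(x)=V_-(\bx)$ when $d=3$. These identities are model-dependent and fail for a generic finite-group step set --- this is exactly why, as the remark following the proposition explains, the same construction breaks down for the twelve non-monomial groups --- so verifying them on the seven Weyl models is where the proposition actually earns its content, and it is missing from your proof.
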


\begin{proof}
  First, the equivalence between the two expressions of $A(x,y)$ comes from~\eqref{OS-Q}. Then, the orbit sum of $xyA$ vanishes because, as noticed above, $xyC$ and each $\vareps_h \, h(xyQ(x,y))$ have the same orbit sum.

  Now we want to write an equation for $A(x,y)$, using the defining equations of $C$ and $Q$. Let us first express $C_{-,0}(\bx)$ in terms of $A$ and $Q$, by extracting  from~\eqref{Aexpr-1} terms of the form $x^iy^0$, with $i<0$. By examination of the three possible groups, detailed in Table~\ref{tab:weyl}, we see  that only one element $h$ contributes, namely $\omega^-=\phi\psi\cdots$ with $d-1$ generators. More explicitly,
  \beq\label{Cm0}
    C_{-,0}(\bx)=A_{-,0}(\bx)+\frac{ (-1)^{d-1}}{2d-1}
    \begin{cases}
      \bx^d Q(\bx,0) & \text{if } d=2 \text{ or } 4,\\
      \bx^d Q(0,\bx) & \text{if } d=3.
    \end{cases}
  \eeq
  Analogously, when we extract from~\eqref{Aexpr-1} terms of the form $x^0 y^j$, with $j<0$, the only group element that contributes is $\omega^+=\phi\psi\cdots $ with $(d+1)$ generators, and
  \beq\label{C0m}
    C_{0,-}(\by)= A_{0,-}(\by)+\frac {(-1)^{d+1}}{2d-1}
    \begin{cases}
      \by^m Q(0,\by) & \text{if } d=2 \text{ or } 4,\\
      \by^3 Q(\by,0) & \text{if } d=3,
    \end{cases}
  \eeq
where $m=2$ for $d=2$ and $m=3$ for $d=4$.  Finally, the only element $h$ that contributes to the coefficient of $x^0y^0$ in $C(x,y)$ is the identity, and
 \beq\label{C00}
    C_{0,0}= A_{0,0} + \frac 1 {2d-1} Q(0,0).
\eeq
  
  We now start from~\eqref{Aexpr-2} to write an equation defining $A(x,y)$.
   By examining again the three possible groups we see that   $\omega(x,y)=(\bu,\bv)$ with
  \beq\label{uv}
    (u,v)=
    \begin{cases}
      (x, y) & \text{if } d=2 \text{ or } 4,\\
      (y,x) &\text{if } d=3.
    \end{cases}
  \eeq
 Let us finally denote $\delta= \mathbbm 1_{(-1,-1)\in \cS}$. Then
  \begin{align*}
    K(x,y)   A(x,y) &=   K(x,y)C(x,y) - \frac{\bx \by}{2d-1}\left( {\OS(xy)} -(-1)^d  K(x,y) \omega(xy Q(x,y))\right) \\
    &= 1 -t\by H_-(x) C_{-,0}(\bx) -t\bx V_-(y)C_{0,-}(\by) -t\delta \bx\by
      C_{0,0}
      - \frac{\bx \by}{2d-1} {\OS(xy)}\\
                    & \quad + \frac{(-1)^d \bx^2 \by ^2}{2d-1} \Big(
                       1 -tv H_-(\bu) Q(\bu,0) -tu V_-(\bv)Q(0,\bv) +t\delta uv
 Q(0,0)\Big).
      \end{align*}
      Here, we have used~\eqref{eqfunc-gen} to express $K(x,y)C(x,y)$, and~\eqref{eqfunc-gen-qu} to express $K(x,y) \omega(Q(x,y))=K(\bu, \bv) Q(\bu, \bv)$, with $(u,v)$ given by~\eqref{uv}.

      We now express $C_{-,0}(\bx)$, $C_{0,-}(\by)$, and $C_{0,0}$ thanks to~\eqref{Cm0}--\eqref{C00},
      and examine separately the cases $d=2,4$ and $d=3$.

      If $d=2$ or $4$, we have
      \begin{align*}
        K(x,y)   A(x,y) &=   1 -t\by H_-(x) A_{-,0}(\bx) -t\bx V_-(y)A_{0,-}(\by) -t\delta \bx\by      A_{0,0}   - \frac{\bx \by ({\OS(xy)-\bx\by})}{2d-1} \\
                        & \quad - t\delta \frac{\bx \by}{2d-1} \big( Q(0,0)-\bx\by x y Q(0,0) \big) \\
                        & \quad + \frac {t\by}{2d-1} Q(\bx, 0)\left( \bx^d H_-(x)-\bx^2 H_-(\bx)\right)
        \\
        & \quad + \frac{t\bx}{2d-1} Q(0, \by) \Big( \by^m V_-(y)-\by^2 V_-(\by)\Big),
      \end{align*}
      and the announced equation follows by observing that for each of the 5 models under consideration (shown in the first and third columns of Table~\ref{tab:weyl}),
      \[
        \bx^{d-2} H_-(x)= H_-(\bx)        \quad \text{and} \quad \by^{m-2} V_-(y)= V_-(\by).
      \]

      For the two models such that $d=3$ (central column in Table~\ref{tab:weyl}), we have $\delta=0$, and
      \begin{align*}
        K(x,y)   A(x,y) &=   1 -t\by H_-(x) A_{-,0}(\bx) -t\bx V_-(y)A_{0,-}(\by)   - \frac{\bx \by({\OS(xy)+ \bx\by})}{2d-1} \\
                        &\ \quad - \frac {t\by}{2d-1} Q(0,\bx)\left( \bx^d H_-(x)-\bx^2 V_-(\bx)\right)
        \\
        &\ \quad - \frac{t\bx}{2d-1} Q(\by,0) \Big( \by^3 V_-(y)-\by^2 H_-(\by)\Big),
      \end{align*}
      and now the announced equation follows from the fact that
      \[
        \bx H_-(x)=V_-(\bx). \qedhere
      \]
\end{proof}

\begin{remark}
   Let us explain why we only consider the seven Weyl models in Proposition~\ref{prop:A-def-gen}.
\begin{itemize}
\item For the four models of Table~\ref{tab:zero}, we have seen that $\OS(xy)=0$. Hence $xy C$ has orbit sum zero, and we can simply take $A(x,y)=C(x,y)$.
\item The remaining $12$ models (Table~\ref{tab:non-monomial}) have a vertical symmetry, and a group of order~$4$. The orbit of $(x,y)$ consists of the pairs $(x,y),(\bx,y)=\phi(x,y)$, and two pairs $(x, \by r(x))=\psi(x,y)$ and $(\bx,\by r(x))=\phi\psi(x,y)$ where $r(x)=r(\bx)$ is a rational function in $x$, \emm not reduced to a monomial,.  Let us consider a series $\tC(x,y)$ of the form~\eqref{A-def-gen}. If at least one of the coefficients $\lambda_{\psi}$ or $\lambda_{\phi\psi}$ is non-zero, then it is not clear how to express $C_{0,-}(\by)$ in terms of $Q$ and $A$, as we did in~\eqref{C0m}. If these two coefficients are taken to be $0$, then
  \[
    \tC(x,y)=C(x,y)-\lambda Q(x,y) +(1-\lambda) \bx^2 Q(\bx,y),
  \]
  and the reader can check that the equation for $\tC$ still involves specializations of $Q$, for any choice of $\lambda$.
\end{itemize}
\end{remark}

Let us thus return to the seven Weyl models. The reason for the construction of $A(x,y)$ (and of the notation change $\tC \rightarrow A$) is the following, partially proved, conjecture.

\begin{conjecture}\label{conj:Weyl}
  For any of the seven Weyl models of Table~\ref{tab:weyl}, the series $A(x,y)$ defined in Proposition~\ref{prop:A-def-gen} is algebraic. In particular, $C(x,y)$ is D-finite.

  For any of the four models of Table~\ref{tab:zero}, the series $C(x,y)$ is algebraic (as $Q(x,y)$ itself).
\end{conjecture}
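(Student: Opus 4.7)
For each of the seven step sets of Table~\ref{tab:weyl}, Proposition~\ref{prop:A-def-gen} already supplies an explicit functional equation~\eqref{A-func-eq} whose only unknowns besides $A(x,y)$ are the two boundary series $A_{-,0}(\bx)$ and $A_{0,-}(\by)$, plus the constant $A_{0,0}$ when $(-1,-1)\in \cS$; moreover $xyA(x,y)$ has zero orbit sum by construction. My plan is to mimic, model by model, the strategy developed here for the king case. First, multiply~\eqref{A-func-eq} by $xy$, apply each $g\in G$ and form the signed sum $\sum_{g\in G}\vareps_g\, g(\cdot)$: since $K$ is $G$-invariant and $\OS(xyA)=0$, the left-hand side vanishes and one is left with an identity relating the images under $G$ of the three boundary unknowns to an explicit polynomial source. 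Second, apply the kernel method along the algebraic branch $Y(x;t)$ of $K(x,Y)=0$ that vanishes at $t=0$, and symmetrically $X(y;t)$: this decouples the unknowns and delivers, after elimination of Galois conjugates by resultants or Gr\"obner bases, polynomial equations characterising $A_{-,0}(\bx)$ and $A_{0,-}(\by)$. Together with~\eqref{A-func-eq}, this yields algebraicity of $A$ and, by Proposition~\ref{prop:A-def-gen}, D-finiteness of $C(x,y)$.

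\medskip

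\noindent\textbf{Zero-orbit-sum models.} For the first three models of Table~\ref{tab:zero} (the Kreweras variants), algebraicity of $C(x,y)$ is already proved in~\cite{mbm-tq-kreweras}. For Gessel's model, Theorem~23 of~\cite{Budd2020Winding} yields algebraicity of $C_{0,0}$ but not of the full bivariate series. Since $\OS(xy)=0$ here, equation~\eqref{OS-C} gives $\OS(xyC)=0$, so one may simply take $A:=C$ and run the same half-orbit/kernel scheme as above, with the simplification that the polynomial source reduces to~$1$. The combinatorial reflection-principle argument of Section~\ref{sec:combi}, specialised to the Gessel group of order~$8$, should also provide an independent derivation of explicit algebraic expressions for $C_{-,0}$ and $C_{0,-}$, hence of $C$.

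\medskip

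\noindent\textbf{Main obstacle.} The principal difficulty is computational rather than conceptual, a point the authors themselves emphasise. The king model already produces an algebraic series of degree~$216$; the tandem, double-tandem and Gouyou-Beauchamps models, whose groups have orders $6$, $6$ and $8$ and whose step polynomials lack the twofold symmetry enjoyed by the king, will almost certainly lead to substantially higher degrees, possibly beyond the reach of current elimination software. The hard steps will therefore be (i) to find, for each model, an analogue of the rational parametrisation of the kernel exploited in Section~\ref{sec:quadratic} which keeps the intermediate expressions tractable, and (ii) to verify a posteriori that the algebraic candidates produced really lie in $\bx\qs[\bx][[t]]$ and $\by\qs[\by][[t]]$; this requires a delicate choice of branches of the kernel and a control of their interaction with the group action, to rule out spurious poles or unbounded negative powers of $x$ and $y$.
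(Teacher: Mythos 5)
This statement is a \emph{conjecture}, not a theorem: the paper does not supply a proof, and explicitly records that the conjecture is only established for the simple, diagonal, and king models among the Weyl cases (via~\cite{Bousquet2016} and the present paper), for the three Kreweras-type models among the zero-orbit-sum cases (via~\cite{mbm-tq-kreweras}), and remains open for the diabolo, tandem, double-tandem, and Gouyou-Beauchamps models, as well as for the full bivariate series $C(x,y)$ in Gessel's model. What you have written is therefore a research plan rather than a proof, and it cannot be compared against ``the paper's own proof'' because none exists.

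Measured against what the paper actually does for the king case, your sketch also glosses over the step that was genuinely hard there. You assert that the kernel method ``delivers, after elimination of Galois conjugates by resultants or Gr\"obner bases, polynomial equations characterising $A_{-,0}(\bx)$ and $A_{0,-}(\by)$.'' That is precisely what did \emph{not} happen: the paper obtains a cubic equation in one catalytic variable involving four auxiliary univariate series $R_0, R_1, B_1, B_2$, derives from the Bousquet-M\'elou–Jehanne machinery a polynomial \emph{system} relating them, and then reports that both bare-hand elimination and Gr\"obner bases failed to yield individual minimal polynomials; the authors resorted to a guess-and-check step, followed by a nontrivial verification using a parametrisation of the underlying curve. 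None of these ingredients -- the choice of auxiliary series, the reduction to a tractable system, and the rational parametrisation -- generalise automatically, and indeed the paper reports that it could not even guess an equation for $C_{-1,0}$ in the diabolo case with $4000$ terms. Finally, the appeal to the reflection-principle argument of Section~\ref{sec:combi} for Gessel's model is misplaced: that section is stated and proved only for the seven Weyl models of Table~\ref{tab:weyl}, where the quadrant corresponds to a single Weyl chamber; Gessel's model (Table~\ref{tab:zero}) has the quadrant identified with a \emph{union} of Weyl chambers, so the sign-reversing involution of Proposition~\ref{prop:bij3v-general} does not apply as stated.
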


Let us recall that the conjecture is proved for the first two models of Table~\ref{tab:weyl} in\cite{Bousquet2016}, and in this paper for the third (king steps). The second part of the conjecture is proved for the first three models of Table~\ref{tab:zero} in\cite{mbm-tq-kreweras} (D-finiteness was established earlier in\cite{RaschelTrotignon2018Avoiding}).
This leaves us with five models for which the conjecture is open: four Weyl models, and the (conjecturally algebraic) Gessel model.  Based on the solved cases, we believe that algebraicity should hold in a strong sense, and in particular, that each series $A_{i,j}$ (for Weyl models) or $C_{i,j}$ (for the models of Table~\ref{tab:zero}) should be algebraic. For the four remaining Weyl models, we have tried to guess (using the \texttt{gfun} package~\cite{gfun} in {\sc Maple}) a polynomial equation for the series $A_{-1,0}$, which coincides with the \gf\ $C_{-1,0}$ of walks ending at $(-1,0)$.
We could not guess anything for the diabolo model (using the counting sequence up to length $n=4000$), but we discovered equations of degree $24$ for each of the next three.
The degree of $C_{-1,0}$ is $4$ (resp.\ $8$, $24$) for the three solved Weyl models. For Gessel's model, $C_{0,0}$ was conjectured to be algebraic of degree $24$   in~\cite{Bousquet2016}, and algebraicity was proved since then in~\cite[Thm.~23]{Budd2020Winding} (taken with $\alpha=0$, $\beta_-=-\pi/2$, and $\beta_+=3\pi/4$).

\section{The king walks: statement of the results}
\label{sec:KingResults}
We now fix the step set to be $\cS=\{-1,0,1\}^2\setminus\{(0,0)\}$. We still denote by $Q(x,y)$ the \gf\ of walks confined to the first quadrant, and by $C(x,y)$ the \gf\ of walks avoiding the negative quadrant. The orbit of $(x,y)$ under the action of $G$ is $\{(x,y), (\bx, y), (x, \by), (\bx, \by)\}$. Recall from~\cite{bomi10} that $Q(x,y)$ can be expressed in terms of a simple rational function:
  \[
  xyQ(x,y)= [x^> y^>] \frac{\OS(xy)}{K(x,y)}
  = [x^> y^>] \frac{(x-\bx)(y-\by)}{1 - t( x + xy + y + \bx y + \bx + \bx \by + \by + x \by )}.
\]
From Proposition~\ref{prop:A-def-gen} we get an expression of $C(x,y)$ of the form
  \[
    C(x,y)= A(x,y) + \frac{1}{3}\big( Q(x,y) -  \bx^2\,Q\!\left(\bx,y\right) - \by^2\,Q\!\left(x,\by\right) \big),
  \]
 where,  as announced,  $A(x,y)$ is algebraic. More precisely, we write 
  \beq\label{A-PM}
    A(x,y)=P(x,y)+\bx M(\bx, y)+\by M(\by, x),
  \eeq
  where $P(x,y)$ and $M(x,y)$ belong to $\qs[x,y][[t]]$, and prove that $P$ and $M$ are algebraic.

\begin{theo}  [{\bf The GF of king walks}]
  \label{theo:king}
  The \gf\ of  king walks starting from $(0,0)$,
confined to $\Cc$, and ending in the first quadrant (resp.~at a negative
abscissa) is
\beq\label{sol-king}
  \frac 1  3 Q(x,y) + P(x,y), \qquad \left(\hbox{resp.} -\frac 1 3
    \bx^2 Q(\bx,y) + \bx M(\bx,y)\right),
\eeq
where $P(x,y)$ and $M(x,y)$ are algebraic of degree $216$ over $\qs(x,y,t)$. 
The generating function of walks ending at a negative ordinate follows using the $x/y$-symmetry of the step set.

The series $P$ can be expressed in terms of $M$ by:
 \beq\label{Psol-king}
   P(x,y)=\bx \big( M(x,y)-M(0,y)\big) +\by \big(
     M(y,x)-M(0,x)\big),
 \eeq
and $M$ is defined by the following equation:
  \begin{align}
	\begin{aligned}
	K(x,y) &\left(2M(x,y)-M(0,y)\right) = 
		\frac{2x}{3} 
		-2t \by(x+1+\bx)M(x,0)
		+t \by(y+1+\by)M(y,0) \\
		&\qquad
		+t(x-\bx)(y+1+\by)M(0,y)
		-t\left(1 + \by^2 - 2\bx\by \right)%
		M_{0,0}
			-t\by 
		M_{1,0},
	\end{aligned}
	\label{eqMMM-king}
\end{align}
where $K(x,y) = 1 - t( x + xy + y + \bx y + \bx + \bx \by + \by + x \by )$.
The specializations $M(x,0)$ and $M(0,y)$ are algebraic 
of degree $72$ over $\qs(x,t)$ and $\qs(y,t)$, respectively, and $M_{0,0}$ and
$M_{1,0}$ have degree $24$ over $\qs(t)$. 
\end{theo}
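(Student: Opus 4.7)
My plan is to adapt to the king model the three-step strategy used in~\cite{Bousquet2016} for the simple and diagonal cases. First, I specialize Proposition~\ref{prop:A-def-gen}: the king group has order $2d=4$, with $\omega=(\bx,\by)$ and $\varepsilon_\omega=+1$, so $\OS(xy)=(x-\bx)(y-\by)$. Formula~\eqref{Aexpr-1} then gives
\[
A(x,y)=C(x,y)-\tfrac{\bx\by}{3}\bigl(xyQ(x,y)-\bx y\, Q(\bx,y)-x\by\, Q(x,\by)\bigr),
\]
which rearranges into $C(x,y)=A(x,y)+\tfrac13\bigl(Q(x,y)-\bx^2 Q(\bx,y)-\by^2 Q(x,\by)\bigr)$. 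Once the decomposition $A=P+\bx M(\bx,y)+\by M(\by,x)$ is established, extracting the parts supported on $i,j\ge 0$ (resp.\ $i<0$, $j\ge 0$) produces the two cases of~\eqref{sol-king}.

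To derive the functional equation~\eqref{eqMMM-king} and the relation~\eqref{Psol-king}, I exploit the $x/y$-symmetry of the king step set (inherited by $C$, hence by $A$) to impose $P(x,y)=P(y,x)$ and to reuse a single series $M\in\qs[x,y][[t]]$ for both the negative-$x$ and negative-$y$ parts of $A$. This decomposition is legitimate because walks in $\Cc$ cannot end in the strictly negative quadrant, so $A$ has no monomial $x^iy^j$ with $i<0$ and $j<0$. The boundary traces then read $A_{-,0}(\bx)=\bx M(\bx,0)$ and $A_{0,-}(\by)=\by M(\by,0)$. Plugging everything into the king specialization of~\eqref{A-func-eq}, with $H_-(x)=1+x+\bx$, $V_-(y)=1+y+\by$ and $\delta=1$, and matching coefficients region by region, the part supported on $i,j\ge 0$ yields the relation~\eqref{Psol-king}, while the remaining part, after renaming $\bx\to x$, produces~\eqref{eqMMM-king} for the combination $2M(x,y)-M(0,y)$; the specific shape of this combination emerges from reconciling the contributions of $P$, $\bx M(\bx,y)$ and $\by M(\by,x)$ along the axis $x=0$.

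Algebraicity of $M$ is then established by the quadratic method applied to~\eqref{eqMMM-king}. The kernel $K(x,y)$ is quadratic in $y$, with two algebraic roots $Y_\pm(x)$ in the field $\qs((x))((t))$; substituting $y=Y_\pm(x)$ kills the left-hand side and leaves two relations among $M(x,0)$, $M(0,Y_\pm(x))$, $M_{0,0}$ and $M_{1,0}$. Symmetrizing over the involution $Y_+\leftrightarrow Y_-$ and eliminating by successive resultants produces a polynomial equation for $M(x,0)$ that I expect to have degree $72$; specialization at $x=0$ yields the degree $24$ equations for $M_{0,0}$ and $M_{1,0}$. Solving~\eqref{eqMMM-king} rationally for $M(x,y)$ over the resulting algebraic extension then gives the degree $216$ bound, hence the same bound for $P$ and for the full generating function. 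The main obstacle will be the algebraic complexity: in the simple and diagonal cases of~\cite{Bousquet2016} the analogue of $M(x,0)$ had degree only $24$, whereas here the king kernel involves all eight neighbour steps, so the roots $Y_\pm$ live in a richer extension, and the extra term $t\bx\by\,A_{0,0}$ coming from $(-1,-1)\in\cS$ couples $M(x,0)$, $M(0,y)$ and $M_{0,0}$ more tightly. Extracting clean minimal polynomials of the advertised degrees from the inflated resultants will require substantial computer-algebra factorization.
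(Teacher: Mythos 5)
Your first half is on track: the use of Proposition~\ref{prop:A-def-gen}, the decomposition $A=P+\bx M(\bx,y)+\by M(\by,x)$, the identification of the boundary trace $A_{-,0}(\bx)=\bx M(\bx,0)$, and the fact that~\eqref{sol-king} just reads off the three pieces of $A$, all match the paper. But there is a genuine gap in how you propose to obtain~\eqref{Psol-king}. Projecting the functional equation~\eqref{A-func-eq} onto the region $i,j\ge 0$ does \emph{not} give~\eqref{Psol-king}; it gives a ``catalytic'' equation for $P$ of the form
\[
K(x,y)P(x,y)=\tfrac23-t\by(x+1+\bx)P(x,0)-t\bx(y+1+\by)P(0,y)+t\bx\by P_{0,0}+\hbox{(terms in $M(0,\cdot)$)},
\]
which still involves the unknown sections $P(x,0)$, $P(0,y)$, $P_{0,0}$ and couples them to $M$. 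The closed-form expression~\eqref{Psol-king} for $P$ in terms of $M$ alone is a different kind of identity; it is obtained by extracting the $x,y>0$ part of the \emph{vanishing-orbit-sum} identity $xyA(x,y)-\bx yA(\bx,y)+\bx\by A(\bx,\by)-x\by A(x,\by)=0$ (equivalently, by the reflection-principle bijection of Section~\ref{sec:combi}), which your sketch never invokes at that point. You do need it: the paper then discards the region-projected equation for $P$ and uses~\eqref{Psol-king} instead, precisely because the latter eliminates $P$ completely from the system.

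The second half of your plan -- kernel cancellation plus ``symmetrizing over $Y_+\leftrightarrow Y_-$ and eliminating by successive resultants'' -- also does not match the actual derivation and would not succeed as stated. Only one root $Y(x)$ of $K(x,\cdot)$ is a formal power series in $t$ (the other has negative $t$-valuation), so there is no usable involution $Y_+\leftrightarrow Y_-$. What the paper actually exploits is the \emph{extra} symmetry $K(x,y)=K(y,x)$, which is not part of the group, to produce four admissible substitutions $(x,Y(x))$, $(\bx,Y(x))$, $(Y(x),x)$, $(Y(x),\bx)$ and to eliminate three of the four unknown sections, leaving one relation~\eqref{eq:3Ms} between $M(0,x)$, $M(0,\bx)$, $M(x,0)$. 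Even then there is no resultant shortcut: one must square, carefully split into positive and negative parts (Lemma~\ref{lem:pospole} handles the poles at the cube roots of unity coming from $H_-(x)=1+x+\bx$), eliminate the cross term $S(x)S(\bx)$ by multiplying by $\Sh(x)+\Sh(\bx)$, and only then arrives at a single equation in one catalytic variable~\eqref{eq:Pol1}. Algebraicity then comes from the general theory of~\cite{BMJ06}, not from resultants, and the announced degrees ($72$, $24$, $216$) are established via a system of polynomial equations in four auxiliary series followed by a substantial guess-and-check and field-structure analysis (Sections~\ref{sec:system}--\ref{sec:bivariate-sol}). Your ``solve~\eqref{eqMMM-king} rationally for $M(x,y)$'' step is fine once the sections are known, but you are underestimating what it takes to get there.
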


We give in the following two subsections
a complete algebraic description of all the series
needed to reconstruct $P(x,y)$ and $M(x,y)$ from~\eqref{Psol-king}
and~\eqref{eqMMM-king}, namely, the univariate series $M_{0,0}$ and
$M_{1,0}$ (Section~\ref{sec:univariate}) 
and the bivariate series $M(x,0)$ and $M(0,y)$ (Section~\ref{sec:bivariate}).

A combinatorial proof of~\eqref{Psol-king} is given in Section~\ref{sec:combi}, together with a generalization to other starting points and other Weyl models.

\subsection{Univariate series}
\label{sec:univariate}
We define in three steps an extension of $\Q(t)$ of degree $24$, schematized by 
\[
  \Q(t) \stackrel{4}{\hookrightarrow} \Q(t,\xzero) \stackrel{3}{\hookrightarrow} \Q(t,\av) \stackrel{2}{\hookrightarrow} \Q(t,\aw),
\]
where $\xzero, \av, \aw \in \qs[[t]]$ and the numbers 
give the degrees of the successive extensions.
First, let $\xzero=t+t^2+\LandauO(t^3)$ be the only power series in $t$ satisfying the quartic equation
\beq\label{u-def}
  (1-3\xzero)^3 (1+\xzero)t^2+(1 + 18\xzero^2
  -27\xzero^4)t-\xzero=0.
  \eeq
  Equivalently,
  \beq\label{u-def-alt}
  \frac {\xzero}{(1+\xzero)(1-3\xzero)^3}= \frac {t(1+t)}{1-8t}.
  \eeq
 Second, let $\av = t+3t^2+\LandauO(t^3)$ be the only series with constant term zero satisfying the cubic equation
\beq\label{v-def}
  (1+3\av-\av^3)\xzero -\av(\av^2+\av+1)=0.
  \eeq
 Clearly, it holds that $\xzero\in \qs(v)$, and hence we have $\qs(t,\xzero,\av)=\qs(t,\av)$. The minimal equation of $\av$ over $\Q(t)$ can be written as follows:
    \beq\label{algv}
   {\frac {\av \left( {\av}^{2}+\av+1 \right)  \left( {\av}^{3}-3\,\av-1 \right) ^{
3}}{ \left( {\av}^{2}+4\,\av+1 \right)  \left( 4\,{\av}^{3}+3\,{\av}^{2}-1
 \right) ^{3}}}= \frac {t(1+t)}{1-8t}.
\eeq
   Third, define
\beq\label{w-def}
  \aw = \sqrt{1 + 4\av - 4\av^3 - 4\av^4} = 1+2t+4t^2+\LandauO(t^3).
\eeq
One can check that  $\aw$ has degree $24$ over $\qs(t)$. Hence the extension $\qs(t,\aw)$ contains $v$.

\smallskip
We can now make the series $M_{0,0}$ and $M_{1,0}$ occurring in~\eqref{eqMMM-king} explicit. Note that    by~\eqref{sol-king}, the series $M_{0,0}$ coincides with the series $C_{-1,0}$ that counts 
  walks in $\Cc$ ending at   $(-1,0)$. It is 
  \beq\label{Cm10-expr}
M_{0,0} =C_{-1,0}= \frac{1}{2t} \left( \frac{\aw(1+2\av)}{1+4\av-2\av^3}-1\right)
= t+2t^2+17t^3+80t^4+536t^5+\LandauO(t^6).
\eeq
Analogously, we have
\[
C_{0,0}= \frac 1 3 Q_{0,0} +P_{0,0} \qquad \text{and} \qquad  C_{-2,0}= -\frac 1 3 Q_{0,0} +M_{1,0},
\]
where $ P_{0,0} = 2M_{1,0}$ (by~\eqref{Psol-king}) and
\begin{align}\label{Mm10-expr}
  M_{1,0}&= 
	\frac{1}{6t^2}\left( 1+2t + \frac{(1-2t)(1+2\av)(16\av^6+24\av^5+7\av^4-24\av^3-30\av^2-10\av-1)}{\aw(\av^4+8\av^3+6\av^2+2\av+1)(1+4\av-2\av^3)} \right). 
\end{align}

More generally, we have the following counterpart of~\cite[Cor.~2 and
Cor.~5]{Bousquet2016}. 

\begin{prop}[\bf Walks ending at a prescribed position]
  \label{cor:coeffs}
  Let $\aw$ be the above defined series in $t$. 
 For $j\ge 0$, the series $C_{-1,j}$ belongs  to $ \qs(t,\aw)$, and
 is thus algebraic.
More generally, for $i\ge 1$ and $j\ge 0$, the series $C_{-i,j}$ is
D-finite of the form
 $$
- \frac 1 3 Q_{i-2,j} +  \Rat(t,\aw)
$$
for some rational function $\Rat$. It is transcendental as soon as $i\ge 2$.

For $i\ge 0$ and $j\ge 0$,  the series $C_{i,j}$  is D-finite and transcendental of the form
 $$
 \frac 1 3 Q_{i,j} +  \Rat(t,\aw).
$$
\end{prop}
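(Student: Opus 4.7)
The plan is to reduce the proposition to a coefficient-by-coefficient reading of Theorem~\ref{theo:king}, combined with the explicit parametrizations developed in Sections~\ref{sec:univariate} and~\ref{sec:bivariate} and the known behavior of the quadrant series $Q(x,y)$ for king walks.

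First, I would extract individual coefficients from~\eqref{sol-king}, using~\eqref{Psol-king} to write $P(x,y)$ in terms of $M$. A direct identification yields
\begin{align*}
  C_{i,j} &= \tfrac13 Q_{i,j} + M_{i+1,j}+ M_{j+1,i} &&\text{for } i,j\ge 0, \\
  C_{-1,j} &= M_{0,j} &&\text{for } j\ge 0, \\
  C_{-i,j} &= -\tfrac13 Q_{i-2,j}+ M_{i-1,j} &&\text{for } i\ge 2,\ j\ge 0,
\end{align*}
with analogous formulas on the negative $y$-axis by the $x/y$-symmetry. These already match the shape predicted by the proposition, provided every $M_{k,\ell}$ lies in $\qs(t,\aw)$ and every $Q_{k,\ell}$ with $k,\ell \ge 0$ is D-finite and transcendental.

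The second and most delicate step is to show that every coefficient $M_{i,j}$ of $M(x,y)$ belongs to the degree-$24$ field $\qs(t,\aw)$. The explicit parametrizations of $M(x,0)$, $M(0,y)$ and $M(x,y)$ (of degrees $72$, $72$, $216$) announced in the next two subsections should realize these series as rational functions of $x$, $y$ and a short list of auxiliary quantities themselves expressed over $\qs(t,\aw)$, so that extracting the coefficient of $x^iy^j$ never leaves $\qs(t,\aw)$. This mirrors~\cite[Cor.~2 and Cor.~5]{Bousquet2016} for the simple and diagonal models, where the relevant extension was smaller (degree $8$) but the mechanism was identical.

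Finally, I would invoke the well-known fact that for king walks $Q(x,y)$ is D-finite but not algebraic and, more precisely, each $Q_{i,j}$ with $i,j\ge 0$ is D-finite and transcendental over $\qs(t)$: the asymptotics $Q_{i,j}(n) \sim \kappa_{i,j}\, 8^n n^{-\alpha}$ involve an irrational critical exponent $\alpha$ (see~\cite{bomi10,BoRaSa14,melczer-mishna-sing}), which precludes algebraicity. Combining the three steps, and using closure of D-finite series under sum together with the subring property of algebraic series, gives simultaneously the algebraicity of $C_{-1,j}$, the D-finiteness of $C_{\pm i, j}$ in the remaining cases, and the transcendence of the latter (an algebraic correction cannot turn a transcendental series into an algebraic one). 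The main obstacle is Step~2: algebraicity of $M$ as a bivariate \emph{function} does not by itself locate its Taylor coefficients in $\qs(t,\aw)$, and pinpointing the right field requires genuine structural information on the parametrization, not just its degree.
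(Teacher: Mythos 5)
Your Step~1 — the coefficient identifications
\[
  C_{i,j}=\tfrac13 Q_{i,j}+M_{i+1,j}+M_{j+1,i},\quad
  C_{-1,j}=M_{0,j},\quad
  C_{-i,j}=-\tfrac13 Q_{i-2,j}+M_{i-1,j}\ (i\ge 2)
\]
— is exactly right and agrees with the paper. The issues lie in Steps~2 and~3.

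For Step~2, you correctly flag at the end that ``algebraicity of $M$ as a bivariate function does not by itself locate its Taylor coefficients in $\qs(t,\aw)$,'' but you do not actually close this gap, and the parametrizations you appeal to will not close it directly: the series $U_1(x)$, $U_2(x)$ in Section~\ref{sec:bivariate-sol} and Appendix~\ref{app:P0} have degree $36$ over $\Q(t,x)$ and depend on $x$ in an algebraic, non-rational, way, so an expression $\tilde S(x)\in\Q(t,x,U_1(x))$ says nothing obvious about where the individual Taylor coefficients of $S(x)$ live. The paper's proof uses a different, more elementary mechanism: it writes $S(x)=xT(x)$ and observes that the cubic equation~\eqref{eq:Pol1}, which has coefficients in $\qs(t,x,R_0,R_1,B_1,B_2)\subset\qs(t,x,w)$, takes the structured form $3t(R_0^2+R_0t+t^2)(T(x)-R_0/t)=x\,\widetilde\Pol(\cdots)$; extracting powers of $x$ and inducting on $i$ shows each $S_i\in\Q(t,w)$. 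The coefficients of $R(x)$ then follow from~\eqref{Rh-expr} and~\eqref{ratio:delta}, which express $\hat R(x)$ rationally in $\tilde S(x)$, $x$, $v$, $w$, $t$. Finally, from the kernel equation~\eqref{eqMMM-king}, extracting $[x^iy^j]$ expresses $tM_{i+1,j+1}$ linearly in lower-index $M_{k,\ell}$ and in known series from $\Q(t,w)$, giving all $M_{i,j}\in\Q(t,w)$ by induction on $i+j$. Your proposal would need one of these concrete coefficient-recursion arguments; invoking the degree of the parametrization is not enough.

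Step~3 contains a genuine error. You invoke an \emph{irrational} critical exponent $\alpha$ in the asymptotics $Q_{i,j}(n)\sim\kappa_{i,j}\,8^n n^{-\alpha}$ to rule out algebraicity. That argument is the one used for \emph{infinite-group} models (where it in fact rules out D-finiteness). Here the group is finite of order~$4$ and the exponent is the integer $\alpha=3$, so there is nothing irrational about it. The correct reason algebraicity fails, and the one used in the paper, is that $-3$ is a \emph{negative integer}: a power series algebraic over $\Q(t)$ cannot have coefficient asymptotics $\sim C\rho^n n^s$ with $s\in\{-1,-2,-3,\ldots\}$, by the classical transfer theorems for Puiseux singularities (the Gamma-function prefactor $1/\Gamma(s+1)$ vanishes there). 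Since the correction term $\Rat(t,w)$ is algebraic, the sum $C_{i,j}$ inherits the transcendence of $Q_{i,j}$ whenever $Q_{i,j}$ actually appears, i.e.\ for $i,j\ge 0$, or $i\ge 2$ on the negative axis.
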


Another series of interest is $C(1,1)$, which counts all walks in $\Cc$, regardless of their endpoint. It reads
  \[
    C(1,1)=A(1,1) - \frac 1 3 Q(1,1),
  \]
  and we prove that $A(1,1)$ has degree $24$ over $\Q(t)$. A rational
  expression for $A(1,1)$  in terms of $v$ and $w$ is given in
  Proposition~\ref{prop:univariate_series}. However, $Q(1, 1)$ is
  transcendental~\cite{Bostan-etal-2017-qp} hence $C(1, 1)$ is transcendental too.

We also obtain detailed asymptotic results, which refine general results of Denisov--Wachtel~\cite{DenisovWachtel15} and Mustapha~\cite{Mustapha2019Walks}
(who only obtained estimates up to a multiplicative constant).

\begin{coro}
\label{coro:asy}
The number  $c(n)$ of $n$-step king walks confined to  $\Cc$ and
ending anywhere, and
the number  $c_{0,0}(n)$  of such walks in $\Cc$ ending at the origin satisfy for $n \to \infty$:
\begin{align*} 
	c(n) &= \left(\frac{2^{32} K}{3^7}\right)^{\! \! 1/6} \frac{1}{\Gamma(2/3)} \frac{8^n}{n^{1/3}} 
	- \frac{8}{9 \pi} \frac{8^n}{n} + \LandauO\left(\frac{8^n}{n^{4/3}}\right)
               ,
  \\
  	c_{0,0}(n) &= \left(\frac{2^{29} K}{3^7} \right)^{\! \! 1/3} \,\frac{\Gamma(2/3)}{\pi} \frac{8^n}{n^{5/3}} 
	- \left( \frac{2^{62} L}{3^{31}} \right)^{1/6} \frac{1}{\Gamma(2/3) n^{7/3}} + \LandauO\left(\frac{8^n}{n^{8/3}}\right)
	,
\end{align*}
where 
$K$ and $L$ are the unique real roots of 
\[101^6 K^3 - 601275603 \, K^2 + 92811 \, K - 1\]
%
and 
\begin{align*}
	101^{18} L^3 &- 342130847546623941461342020714770 \, L^2 
	\\&
	+ 25258724190403343220341683641 \, L - 5078^6.
\end{align*}
\end{coro}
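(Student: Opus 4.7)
The plan is to combine the decompositions from Theorem~\ref{theo:king}, namely $C(1,1)=A(1,1)-\tfrac{1}{3}Q(1,1)$ and $C_{0,0}=\tfrac{1}{3}Q_{0,0}+2M_{1,0}$, with Flajolet--Sedgewick singularity analysis applied to the algebraic pieces $A(1,1)$ and $M_{1,0}$, together with the classical asymptotics of the D-finite quadrant counts $Q(1,1)$ and $Q_{0,0}$. Throughout I set $\tau:=1-8t$.

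First I would locate and expand the dominant singularity of the algebraic ingredients at $t=1/8$. Equation~\eqref{u-def-alt} gives $u/((1+u)(1-3u)^3)=t(1+t)/(1-8t)$; the right-hand side blows up at $t=1/8$, forcing $u(1/8)=1/3$ (the triple pole of the left-hand side), and local inversion yields a Puiseux expansion $u=\tfrac{1}{3}-(16/243)^{1/3}\tau^{1/3}+O(\tau^{2/3})$. Feeding this into~\eqref{v-def} gives $v=v_0+O(\tau^{1/3})$, where $v_0$ is the real root of $4v^3+3v^2-1=0$ (the specialization of~\eqref{v-def} at $u=1/3$). A direct check that $1+4v_0-4v_0^3-4v_0^4>0$ shows $w$ stays analytic as a function of $\tau^{1/3}$ at the critical point. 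Substituting these Puiseux series into~\eqref{Mm10-expr} and into the rational expression for $A(1,1)$ provided by Proposition~\ref{prop:univariate_series} produces singular expansions of the form
\[
A(1,1)=\alpha_0+\alpha_1\tau^{-2/3}+\alpha_2\tau^{-1/3}+\cdots,\qquad M_{1,0}=\beta_0+\beta_1\tau^{2/3}+\beta_2\tau+\beta_3\tau^{4/3}+\cdots,
\]
with each coefficient $\alpha_i,\beta_j$ lying in $\Q(v_0)$ and computable in closed form.

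Second, the D-finite ingredients, which enumerate king walks in the quarter plane, admit classical asymptotics. The Denisov--Wachtel theorem~\cite{DenisovWachtel15} combined with the exact quadrant formulas of~\cite{bomi10,BoRaSa14,Bostan-etal-2017-qp} yields $Q(1,1)_n=\tfrac{8}{3\pi}\,8^n/n+O(8^n/n^2)$ and $Q_{0,0}(n)=O(8^n/n^3)$. Applying the standard transfer $[t^n](1-8t)^{\alpha}\sim 8^n n^{-\alpha-1}/\Gamma(-\alpha)$, the leading $(1-8t)^{-2/3}$ term of $A(1,1)$ produces the $8^n/n^{1/3}$ contribution to $c(n)$, while the $(1-8t)^{2/3}$ and $(1-8t)^{4/3}$ terms of $M_{1,0}$ produce the $8^n/n^{5/3}$ and $8^n/n^{7/3}$ contributions to $c_{0,0}(n)$. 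The $-\tfrac13 Q(1,1)$ correction enters $c(n)$ at order $-\tfrac{8}{9\pi}\cdot 8^n/n$, and the $\tfrac13 Q_{0,0}$ correction to $c_{0,0}(n)$ is absorbed into the $O(8^n/n^{8/3})$ remainder.

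Finally, the constants $K$ and $L$ are identified by noting that $\alpha_1$ and $\beta_3$ are explicit rational functions of $v_0$, hence lie in a degree-three extension of $\Q$; eliminating $v_0$ via its minimal polynomial $4v^3+3v^2-1$ yields integer cubics of which the prefactors $(2^{32}K/3^7)^{1/6}$ and $(2^{62}L/3^{31})^{1/6}$ must be roots, and the displayed polynomials are then obtained after clearing denominators. The main obstacle is purely computational: one must propagate the Puiseux expansions through the full tower $\Q(t)\subset\Q(t,u)\subset\Q(t,v)\subset\Q(t,w)$ up to order $\tau^{4/3}$ inside $M_{1,0}$ (and similarly for $A(1,1)$), then carry out the resultant eliminations carefully enough to recognize the specific integer cubics; a computer-algebra-assisted verification, also needed for the requisite precision on the asymptotics of $Q(1,1)$ and $Q_{0,0}$, completes the proof. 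The absence of competing dominant singularities on $|t|=1/8$ is ensured by positivity of the coefficients together with inspection of the minimal polynomials of $u,v,w$ over $\Q(t)$.
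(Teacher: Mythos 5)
Your proposal follows essentially the same route as the paper: decompose via $C(1,1)=A(1,1)-\tfrac13 Q(1,1)$ and $C_{0,0}=A_{0,0}+\tfrac13 Q_{0,0}$ (with $A_{0,0}=2M_{1,0}$), perform singularity analysis by expanding $u$, then $v$, then $w$ in Puiseux series around $t=1/8$ with $v\to v_c$ the real root of $4v^3+3v^2-1$, substitute into the explicit rational expressions of Proposition~\ref{prop:univariate_series}, apply the transfer theorem, invoke the known asymptotics of $Q(1,1)$ and $Q_{0,0}$ from~\cite{Bostan-etal-2017-qp,DenisovWachtel15}, and recover $K,L$ by eliminating $v_c$ via its minimal polynomial. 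The steps, ingredients, and cited results all match the paper's proof.
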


\subsection{Bivariate series}
\label{sec:bivariate}

It remains to describe the series $M(x,0)$ and $M(0,x)$ involved in~\eqref{eqMMM-king}. Both are cubic over $\Q(t,w,x)$, and we express them explicitly in terms of a parametrizing series $\Pun$ that satisfies a reasonably compact cubic equation over $\Q(t,v,x)$. Details are given in Proposition~\ref{prop:RhatShat}. We also refer to Figure~\ref{fig:algSR} on page~\pageref{fig:algSR} for the structure of all series involved in the paper.
 
In Table~\ref{tab:degrees},  we  compare the degrees of several relevant algebraic series in the king's model and the simple and diagonal models solved in~\cite{Bousquet2016}\footnote{The details on the series $M(1,0)$, $M(0,1)$, $M(1,1)$, and $A(1,1)$ are not stated in~\cite{Bousquet2016}, but they can be found in the \Maple\ sessions accompanying this paper on the author's webpage.}. This 
gives a hint of the technical difficulties that arise in the solution of the king's model.

\renewcommand{\arraystretch}{1.5}
\begin{table}[hb]
  \centering
  \begin{tabular}{c|ccccccc}
    \toprule
    Series &    $M(x,y)$ & $M(x,0)$ & $M(0,y)$ & $M(1,1)$& $M(0,1)$ &$M_{0,0}$&
                                                                                         \begin{minipage}{23mm}
$ M(1,0),M_{1,0}$, \\ $A(1,1)$, $A_{0,0}$                                                           \end{minipage}
    \\
    \hline
    Simple/Diag. & 72 & 24 & 12 & 16 & 8  & 4/-- &8\\ \hline
    King & 216 & 72 & 72 & 48 & 48 &  24& 24 \\
    \bottomrule
  \end{tabular}
  \vskip 3mm
  \caption{A comparison of the degrees of various algebraic series for the simple (or diagonal) model~\cite{Bousquet2016} and for the king's model (this paper). The series $M_{0,0}$ is zero for the diagonal model.}
  \label{tab:degrees}
\end{table}
\renewcommand{\arraystretch}{1.0}

\section{The king walks: an equation with only one catalytic variable}
\label{sec:KingEquation}

Our starting point is the functional equation~\eqref{eqfunc-gen}, specialized to 
 \begin{align*}
   \spol(x,y) &= (x +1+\bx)(y+1+\by)-1 \\
   &=x + xy + y + \bx y + \bx + \bx \by + \by + x \by.
\end{align*}
We use the $x/y$-symmetry of $S(x,y)$, which induces a
  bijection between walks ending on the negative $x$- and $y$-axis,
  and implies  that
\[
  C_{-,0}(\bx)= C_{0,-}(\bx)=: C_-(\bx).
\]
This gives
\begin{align}
\label{eq:kernelC1}
	K(x,y) C(x,y) &= 1 - t \by(x+1+\bx) C_{-}(\bx) - t \bx(y+1+\by) C_{-}(\by) - t \bx \by C_{0,0},
\end{align}
where as usual,  the kernel is
$
	K(x,y) = 1 - t\spol(x,y).
$
 Multiplying by $xy$ gives
\begin{align*}
	xy K(x,y) C(x,y) &= xy - t (x^2+x+1) C_{-}(\bx) - t (y^2+y+1) C_{-}(\by) - t C_{0,0}.
\end{align*}
As observed before, the \gf\ $Q(x,y)$ of quadrant walks satisfies similarly:
\[ 
  xy K(x,y) Q(x,y) = xy - t (x^2+x+1) Q(x,0) - t (y^2+y+1) Q(0,y)+ t Q_{0,0}.
\] 

The subsequent solution follows the same steps as for the simple walk and the diagonal walk in~\cite{Bousquet2016}. 
But in practise, the king model turns out to be much heavier, and raises 
serious computational difficulties.
In what follows, we
focus on the points of the derivation that differ
from~\cite{Bousquet2016}. We have performed all computations with the
computer algebra system \Maple. The corresponding sessions are available on the authors' webpages.

\subsection{A series \texorpdfstring{$A(x,y)$}{A(x,y)} with orbit sum zero}

As discussed in Section~\ref{sec:OS}, and summarized in Proposition~\ref{prop:A-def-gen}, it makes sense to introduce a new series $A(x,y)$ defined by
\beq\label{AC-def-king}
  C(x,y)= A(x,y) + \frac 1 3 \left(  Q(x,y) - \bx^2 Q(\bx,y) - \by^2 Q(x,\by)\right).
\eeq
Note that any monomial $x^iy^jt^n$ that occurs in $A(x,y)$ is such that $(i,j)\in \Cc$. Then $xy A(x,y)$ has orbit sum zero, meaning that
\beq\label{eq:orbitA}
	xy A(x,y) - \bx y A(\bx,y) + \bx \by A(\bx,\by) - x \by A(x,\by) = 0.
\eeq
Moreover, $A(x,y)$ is defined by the functional equation~\eqref{A-func-eq}, which reads:
\begin{align}
\label{eq:kernelA1}
	K(x,y) A(x,y) &= 
		\frac{2+\bx^2+\by^2}{3} 
		-t\by(x+1+\bx)A_{-}(\bx)
		-t\bx(y+1+\by)A_{-}(\by) 
		-t\bx \by A_{0,0}.
\end{align}
We now focus on the determination of $A(x,y)$, which should be algebraic according to our final Theorem~\ref{theo:king}.  The next step is to split the series $A(x,y)$ into three parts, which involve  polynomials in $x$ and $y$ instead of Laurent polynomials.

\subsection{Reduction to a quadrant-like problem for \texorpdfstring{$M(x,y)$}{M(x,y)}}
\label{sec:red_quadrant:king}

We now separate in $A(x,y)$ the contributions of the three quadrants, again using the $x/y$-symmetry of the step set: 
\begin{align}
\label{eq:Asep}
	A(x,y) = P(x,y) + \bx M(\bx,y) + \by M(\by,x),
\end{align}
where $P(x,y)$ and $M(x,y)$ lie in $\Q[x,y][[t]]$.
Note that this identity defines $P$ and $M$ uniquely in terms of $A$. 
Replacing $A$ by this expression, and extracting the positive part in $x$ and $y$ from the orbit equation~\eqref{eq:orbitA} relates the series $P$ and $M$ by
\begin{align}\label{PM-king}
	xyP(x,y) &= y \left(M(x,y)-M(0,y)\right) + x \left(M(y,x) - M(0,x)\right),
\end{align}
which is Equation~\eqref{Psol-king} in Theorem~\ref{theo:king}, and also  the same as \cite[Eq.~(22)]{Bousquet2016}.
For a combinatorial proof of this equation see Section~\ref{sec:combi}.

We could now follow the lines of proof of~\cite[Sec.~2.3]{Bousquet2016} to obtain the functional equation~\eqref{eqMMM-king} for $M(x,y)$.
However, we prefer to describe a slightly different -- and more combinatorial -- way to derive this equation.
Clearly, $A(x,y)$ counts walks confined to $\Cc$, starting either from $(0,0)$, $(-2,0)$, or $(0,-2)$, with a weight $2/3$ in the first case and $1/3$ in each of the other two cases. In sight of  the splitting~\eqref{eq:Asep} of $A(x,y)$, the series $P(x,y)$ counts such walks ending in the first quadrant, and $\bx M(\bx,y)$ those ending at a negative abscissa. By combining these two observations and constructing these walks step by step, we can write directly a pair of equations for $P$ and $M$:
\begin{align*}
  K(x,y)P(x,y) &= \frac 2 3 -t\by(x+1+\bx) P(x,0) -t\bx(y+1+\by)P(0,y)+t\bx \by P_{0,0}\\
               &\quad+ t(x+1+\bx) M(0,x)
                 -t\bx M_{0,0}+ t(y+1+\by)M(0,y)-t \by M_{0,0},
\end{align*}
\begin{align}
\label{eqM-king}
\begin{aligned}
    K(x,y) \bx M(\bx,y) &= \frac 1 3 \bx^2 -t\by(x+1+\bx) \bx M(\bx,0)
    -t(y+1+\by)M(0,y)+t \by M_{0,0}\\
    &\quad+ t\bx(y+1+\by)P(0,y)-t\bx \by P_{0,0}.
\end{aligned}
\end{align} 
In the first equation for instance, the term $t(y+1+\by)M(0,y)-t \by M_{0,0}$ counts walks that come from the NW quadrant and enter the non-negative quadrant through the $y$-axis. We will in fact ignore the first equation and replace it by  the link~\eqref{PM-king} between $P$ and  $M$. Extracting the coefficient of $x^1$ in~\eqref{PM-king} gives
  \[
    P(0,y)=  M_x(0,y)+ \by\left( M(y,0)-M_{0,0}\right).
  \]
  Extracting now the coefficient of $y^0$ gives
  \[
    P_{0,0}=2 M_x(0,0) =2M_{1,0}.
  \]
  We plug these two identities into~\eqref{eqM-king}: upon replacing $x$ by $\bx$ and then dividing by~$x$, we find:
  \begin{align}
	\label{eqM-prelim-king}
	\begin{aligned}
        K(x,y)  M(x,y) &= \frac{1}{3} x  -t\by(x+1+\bx)  M(x,0)
    -t\bx(y+1+\by)M(0,y)+t \bx\by M_{0,0}\\
    &\quad+ t(y+1+\by)
\left(M_x(0,y)+ \by\left( M(y,0)-M_{0,0}\right)\right)
-2t \by M_{1,0}.
	\end{aligned}
	\end{align}
This is not yet~\eqref{eqMMM-king}, as there is one more series involved here, namely $M_x(0,y)$. However, by extracting the coefficient of $x^0$ in the above equation, we find one more relation:
\[
t\by(y+1+\by) M(y,0)+(ty+t\by-1) M(0,y)+2t(y+1+\by)M_x(0,y)-t\by(\by+2+y)M_{0,0}-3t\by M_{1,0} =0 .
\]
Combined with~\eqref{eqM-prelim-king}, this now gives~\eqref{eqMMM-king}.

\subsection{Cancelling the kernel: an equation between bivariate series}

Next we will cancel the kernel $K$. As a polynomial in $y$, 
the kernel admits only one root that is a formal power series in $t$:
\[ 
Y(x) = \frac{1 - t(x+\bx) - \sqrt{ (1-t(x+\bx))^2-4t^2(x+1+\bx)^2 }}{ 2t(x+1+\bx)}
	  = (x+1+\bx)t + \LandauO(t^2).
\] 
Note that $Y(x)=Y(\bx)$.
We specialize~\eqref{eqMMM-king} to the pairs $(x,Y(x)), (\bx,Y(x)), (Y(x),x)$, and $(Y(x), \bx)$ (the left-hand side vanishes for each specialization since $K(x,y)=K(y,x)$, yet this symmetry is not part of the group of the model), and we eliminate $M(0,Y)$, $M(Y,0)$, and $M(\bx,0)$  from the four resulting equations. We obtain:
\begin{align}
\label{eq:3Ms}
\begin{aligned}
	(x+1+\bx)\left(Y(x)-\frac{1}{Y(x)}\right) & \left( xM(0,x) - 2\bx M(0,\bx) \right) 
	+ 3(x+1+\bx)M(x,0) 
	\\ & \qquad
	-\frac{2\bx Y(x)}{t} + 3M_{1,0}
	+ (2Y(x) -x - \bx) M_{0,0} = 0.
\end{aligned}
\end{align}
We have now eliminated the trivariate series $M(x,y)$. We are left with three bivariate series, namely $M(0,x)$, $M(0, \bx)$, and $M(x,0)$. 
In the next section we eliminate  the term $M(x,0)$, so as to end with two specializations of the series $M(0,x)$.

\subsection{An equation between \texorpdfstring{${M(0,x)}$}{M(0,x)} and \texorpdfstring{${M(0,\bx)}$}{M(0,1/x)}}
Let us denote the discriminant occurring in $Y(x)$ by
\beq\label{Delta-def}
  \Delta(x) := (1-t(x+\bx))^2-4t^2(x+1+\bx)^2
  = (1-t(3(x+\bx)+2))(1+t(x+\bx+2)),
\eeq
and  introduce the notation
\begin{align} 
\label{eq:RSdef}
\begin{aligned}
	R(x) &:= t^2 M(x,0) = \frac{xt^2}{3} + \left(1+\frac{x^2}{3}\right)t^3 + \LandauO(t^4),  \\
	S(x) &:= txM(0,x) = x(1+x)t^2 + 2x(1+x+x^2)t^3 + \LandauO(t^4).
\end{aligned}
\end{align}
Note that $t^2M_{0,0}=R_0=tS_1$ and $t^2M_{1,0}=R_1$. Then~\eqref{eq:3Ms} reads
\begin{multline}\label{eq:3Ms2}
  	\sqrt{\Delta(x)} \left( S(x) - 2S(\bx) + \frac{xR_0-t}{t(1+x+x^2)} \right) = 
\\		3(x+1+\bx)R(x) + 3R_1  
+ \frac{1 - t(x+\bx)}{t(1+x+x^2)}(xR_0-t)  - (x+\bx)R_0
=: \Rh(x),
\end{multline}
where we defined $\Rh(x)$ as a shorthand for the right-hand side.
Observe that introducing 
\begin{align}
  \label{eq:STcardanosubs}
  \Sh(x): = S(x) - \frac{3R_0/t 
  -2x-\bx}{3(x+1+\bx)},
\end{align} 
allows us to rewrite the above equation as
\beq\label{Rh-Sh}
  \sqrt{\Delta(x)} \left( \Sh(x) - 2\Sh(\bx) \right) = \Rh(x).
\eeq

Before we go into the details  of the next steps, let us describe their principle.   We consider both sides of~\eqref{eq:3Ms2} as power series in $t$ whose coefficients are Laurent series in $x$. We square Equation~\eqref{eq:3Ms2} and extract the negative part in
$x$, as defined at the beginning of Section~\ref{sec:general}. On the right-hand side, the terms involving 
$R(x)$ (mostly)  disappear as this series involves only non-negative powers of~$x$. On the left-hand side, the terms involving only $S(x)$ mostly disappear as well. There remain terms involving only  $S(\bx)$, as well as the negative part of 
  $\Delta(x)S(x)S(\bx)$. In other words, the result is an expression 
  for the negative part of 
  $\Delta(x)S(x)S(\bx)$ in terms of $S(\bx)$ and univariate series. Using the symmetry of $\Delta(x)$ in~$x$ and~$\bx$, we 
will then express the \emm positive, part of 
$\Delta(x)S(x)S(\bx)$ in terms of $S(x)$ and univariate series. We will thus reconstruct an expression of $\Delta(x)S(x)S(\bx)$ that
does not involve $R(x)$, as in~\cite[Sec.~2.5]{Bousquet2016}.

In order to make the above programme effective, we  need the following lemma, which tells us how to extract the non-negative part of certain series as those that we meet when we square~\eqref{eq:3Ms2}.

\begin{lemma}
\label{lem:pospole}
Let $\zeta=e^{2i\pi/3}$ and $\bar \zeta=e^{-2i\pi/3}$ be the two primitive cubic roots of unity. Let $F(x) \in \C[x]((t))$.
Then,
\[
  [x^{\geq}] \frac{F(\bx)}{1+x+x^2} =\frac 1 {1-\zeta}\, \frac{F(\zeta)}{1-\zeta x}
  +\frac 1 {1-\bzeta}\,\frac{F(\bzeta)}{1-\bzeta x}
\]
and
\begin{multline*}
  [x^{\geq}] \frac{F(\bx)}{(1+x+x^2)^2} = \frac 2 3
  \left(\frac 1 {1-\zeta}\, \frac{F(\zeta)}{1-\zeta x}    +\frac 1 {1-\bzeta}\,\frac{F(\bzeta)}{1-\bzeta x} \right)
  \\  +\frac 1 {(1-\zeta)^2} \left( \frac{\zeta F'(\zeta)} {1-\zeta x}+ \frac{F(\zeta)}{(1-\zeta x)^2}  \right)
   +\frac 1 {(1-\bzeta)^2} \left( \frac{\bzeta F'(\bzeta)} {1-\bzeta x} +  \frac{F(\bzeta)}{(1-\bzeta x)^2} \right).
 \end{multline*}
In fact, the first formula holds for $F(x) \in \bx\C[x]((t))$, and the second for $F(x) \in \bx^3\C[x]((t))$.
\end{lemma}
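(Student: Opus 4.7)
The plan is to combine a partial fraction decomposition of $1/(1+x+x^2)$ (and its square) with a direct calculation of the non-negative part in $x$ of series of the form $F(\bx)/(1-\alpha x)^k$ for $k\in\{1,2\}$. First I would observe that $1+x+x^2=(1-\zeta x)(1-\bzeta x)$, using $\zeta+\bzeta=-1$ and $\zeta\bzeta=1$, and decompose
\begin{equation*}
\frac{1}{1+x+x^2}=\frac{1}{1-\zeta}\cdot\frac{1}{1-\zeta x}+\frac{1}{1-\bzeta}\cdot\frac{1}{1-\bzeta x}.
\end{equation*}

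The main computational ingredient is the identity
\begin{equation*}
[x^\geq]\frac{F(\bx)}{1-\alpha x}=\frac{F(\alpha)}{1-\alpha x}, \qquad \alpha\in\C^*,\ F\in\C[x]((t)),
\end{equation*}
which I would prove by writing $F(x)=\sum_{j\ge 0}a_j(t)\,x^j$ (a finite sum for each power of $t$, by hypothesis) and expanding $1/(1-\alpha x)$ as a geometric series. The non-negative part of $\sum_{j,k\ge 0}a_j\alpha^k x^{k-j}$ consists of the pairs with $k\ge j$; setting $m=k-j$ collapses the double sum to $F(\alpha)\cdot\sum_{m\ge 0}(\alpha x)^m$. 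Combined with the partial fractions, this gives the first formula of the lemma. A parallel calculation, starting from $1/(1-\alpha x)^2=\sum_{k\ge 0}(k+1)\alpha^k x^k$ and splitting the factor $(m+j+1)=(m+1)+j$ that arises in the same way, yields the analogous double-pole formula
\begin{equation*}
[x^\geq]\frac{F(\bx)}{(1-\alpha x)^2}=\frac{F(\alpha)}{(1-\alpha x)^2}+\frac{\alpha F'(\alpha)}{1-\alpha x}.
\end{equation*}

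For the second formula of the lemma I would square the partial fraction expansion of $1/(1+x+x^2)$; using $(1-\zeta)(1-\bzeta)=3$, the cross term simplifies to $\frac{2}{3(1+x+x^2)}$, which is then re-expanded via the first partial fractions formula. Combining with the two extraction identities above yields exactly the four terms in the statement. The extensions to $F\in\bx\C[x]((t))$ and $F\in\bx^3\C[x]((t))$ require checking that a low-order pole of $F$ at $x=0$ produces only non-negative-power contributions already absorbed into the evaluations $F(\zeta)$, $F(\bzeta)$, $F'(\zeta)$, $F'(\bzeta)$ on the right-hand side; testing the formulas on the monomials $F(x)=\bx^j$ for the relevant range of $j$ suffices. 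The only obstacle throughout is careful index bookkeeping --- verifying at each step that the relevant series expansions make sense formally and that the range of allowed poles matches the stated hypothesis --- since the argument is otherwise completely formal and elementary.
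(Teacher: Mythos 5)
Your proposal is correct and follows essentially the same route as the paper: partial fraction decomposition of $1/(1+x+x^2)$ and its square into terms $1/(1-\zeta x)^k$, followed by a direct computation of the non-negative part of $F(\bx)/(1-\alpha x)^k$, and a final check on low-degree monomials $\bx^j$ to justify the extension to $F$ with a pole at $0$. The only cosmetic differences are that you derive the second partial-fraction expansion by squaring the first (using $(1-\zeta)(1-\bzeta)=3$) rather than stating it outright, and you carry out the extraction for general $F$ by reindexing a double sum rather than reducing to $F(x)=x^k$ by linearity; both are equivalent to the paper's argument.
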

\begin{proof}
By linearity, it suffices to prove the lemma when $F(x)= x^k$, for $k\ge -1$ in the first part, $k\ge -3$ in the second part. A key ingredient are the following partial fraction expansions:
    \[
      \frac 1 {1+x+x^2} = \frac 1 {(1-\zeta)(1-\zeta x)}
      + \frac 1 {(1-\bzeta)(1-\bzeta x)},
    \]
 \[
      \frac 1 {(1+x+x^2)^2} =\frac 2 3 \left( \frac 1 {(1-\zeta)(1-\zeta x)}
        + \frac 1 {(1-\bzeta)(1-\bzeta x)}\right)
+ \frac 1 {(1-\zeta)^2(1-\zeta x)^2}
      + \frac 1 {(1-\bzeta)^2(1-\bzeta x)^2}   .   
    \]    
    Then we work out each piece separately, first focussing on the case  $k\ge 0$. For instance, 
    \[
      [x^\ge] \frac{\bx^k}{1-\zeta x}= \bx^k \sum_{n\ge k} \zeta^n x^n= \frac{\zeta^k}{1-\zeta x}
    \]
    and
  \[
      [x^\ge] \frac{\bx^k}{(1-\zeta x)^2}= \bx^k \sum_{n\ge k} (n+1)\zeta^n x^n= \zeta^k \sum_{n\ge 0} (k+n+1) \zeta^n x^n= \frac{k\zeta^k}{1-\zeta x} +\frac{\zeta^k}{(1-\zeta x)^2} .
    \]
To complete the proof, we check that the first (resp.\ second) identity of the lemma holds as well if $F(x)=\bx^\ell$ for $\ell=1$ (resp.\ $\ell =1,2,3$).
\end{proof}

By expanding~\eqref{eq:3Ms2} at $x=\zeta$ and $x=\bzeta$, we derive the values of $S(x)$ at these two points, which will be useful in sight of the above lemma:
\begin{align}
	\label{eq:S1}
	&S(\zeta) = S(\bzeta) = - \frac{R_0 + 3 R_1}{1+t} = -t^2 -11t^4 -30t^5 + \LandauO(t^6).
\end{align} 

Now, as already observed, the right-hand side $\Rh(x)$ of~\eqref{eq:3Ms2} is mostly positive in $x$, meaning that the valuation in $x$ of the coefficient of $t^n$ is bounded from below, uniformly in $n$. We now square both sides.  The negative part of (the square of) the right-hand side is easily obtained by an expansion around $x=0$, and found to be
\[
   \left( 2 R_0+t \right) ^{2}{\bx}^{2}+2 \bx \left( 2 R_0+t
 \right)  \left( 2 R_0+6 R_1-1-t \right).
\]
In the square of the left-hand side of~\eqref{eq:3Ms2} some terms are also mostly positive -- in fact all terms that do not involve $S(\bx)$.  Their negative parts can be extracted as above  by an expansion around $x=0$. Some other terms, like $\Delta(x) S(\bx)^2$, are mostly negative, and we subtract their non-negative parts, obtained via an expansion at $x=\infty$ (which is legitimate due to their Laurent polynomial coefficients in $x$). And finally there are two tricky terms:
\[
\Delta(x) S(x) S(\bx) \quad \hbox{ and } \quad   \Delta(x) \frac{S(\bx)(xR_0/t-1)}{1+x+x^2},
\]
which require some care. We leave the first term untouched, since what we want to determine is precisely its negative part.  The numerator of the second term is a series in $t$ with coefficients in $\Q[x,\bx]$. We expand it at infinity, using $S_0=0$ and $S_1=R_0/t$, and obtain, 
\[
  \Delta(x) S(\bx)(xR_0/t-1) = -3 R_0^2 \,x^2+ F(\bx),
\]
for a series $F(x) \in \bx\qs[x]((t))$. Then we divide this by $(1+x+x^2)$. The term $x^2/(1+x+x^2)$ has no negative part, and we apply Lemma~\ref{lem:pospole} to express the negative part of $F(\bx)/(1+x+x^2)$. After having treated all terms, we reach an identity of the form
\[
  [x^<] \big(\Delta(x)S(\bx)S(x) \big)=
  \Delta(x) S(\bx)^2 - \Delta(x) \frac{S(\bx)(xR_0-t)}{t(1+x+x^2)}
  +\frac{\Pol(R_0,R_1,t,x)}{t x^2(1+x+x^2)}
\]
for some polynomial $\Pol$ with rational coefficients. We can now replace $x$ by $\bx$ to obtain an expression of the positive part of 
$\Delta(x)S(\bx)S(x)$ (which is a series in $\qs[x, \bx][[t]]$). We finally denote by $P_0$  the coefficient of $x^0$ in $\Delta(x) S(x)S(\bx)$, 
and obtain an expression of $\Delta(x) S(x)S(\bx)$ in terms of $S(x)$, $S(\bx)$, $P_0$, $R_0$, and $R_1$, which can be written  as:
\begin{align}
\label{eq:2Ms}
\begin{aligned}
	\Delta(x) \left( S(x)^2 + S(\bx)^2 - S(x) S(\bx) + \frac{S(x)(xt-R_0) + S(\bx) ( \bx t - R_0 )}{t(x+1+\bx)} \right) 
	= \\
	(R_0 + 3R_1) \left(
         (2R_0+t) \left( x+\bx+ \frac {1+t}{t(x+1+\bx)} \right)-1-t
        \right) \\
	- (1+4t)(x+\bx)R_0 + (t^2 + tR_0 + R_0^2) (x^2+\bx^2) - P_0.
\end{aligned}
\end{align}
As in~\cite{Bousquet2016} the numerator of the right-hand side as a polynomial in $x$ is not divisible by $\Delta(x)$, nor by any of its factors. 

Observe that~\eqref{eq:2Ms} can also be written in terms of the series $\Sh$ defined by~\eqref{eq:STcardanosubs}, and then takes the following form:
\begin{align}
	\label{eq:2MsSt}
	\Delta(x) \left( \Sh(x)^2 - \Sh(x) \Sh(\bx) + \Sh(\bx)^2 \right) = \frac{{\Pol}(P_0,R_0, R_1, t,x)}{x^4 t^2 (x+1+\bx)^2},
\end{align}	
where ${\Pol}$ is another polynomial with rational coefficients.
This simpler form in  terms of $\Sh$ will guide us in the following final step, in which  we eliminate $S(\bx)$ and obtain an equation in which $S(x)$ is the only bivariate series.

\subsection{An equation for \texorpdfstring{${M(0,x)}$}{M(0,x)} only}
\label{sec:cat}

We would like to extract the positive part of~\eqref{eq:2Ms}, but we are stopped by the mixed term $S(x) S(\bx)$. However,
from the structure visible in~\eqref{eq:2MsSt}, we observe that a multiplication by $\Sh(x) + \Sh(\bx)$
eliminates this mixed term, leaving us with the following  cubic
equation in $\Sh$:
\[
  \Delta(x) \left( \Sh(x)^3+ \Sh(\bx)^3 \right) =
  \left( \Sh(x)+ \Sh(\bx)\right)\frac{\Pol(P_0,R_0, R_1, t,x)}{x^4 t^2 (x+1+\bx)^2}.
  \]
We then rewrite this in terms of $S$ rather than $\Sh$, and extract  the non-negative part in $x$, using the same tools  as in the previous subsection.  We refer for full details to  the accompanying \Maple\ worksheet. The terms that are mostly positive or mostly negative in $x$ do not raise any difficulties.  The two tricky terms are those that involve $S(\bx)^2/(1+x+x^2)$ and $S(\bx)/(1+x+x^2)^2$. Their non-negative parts are extracted using Lemma~\ref{lem:pospole}.  When processing the latter term, three additional univariate series occur, namely $S_2$, $S'(\zeta)$, and $S'(\bzeta)$. We find it more convenient to  work with the real and imaginary parts of $\zeta S'(\zeta)$, and to define series $B_1$ and $B_2$ by
\begin{align} \label{eq:B1B2def}
  \begin{aligned}
    (1+t)^2 \zeta S'(\zeta) &= \DSA + i\sqrt{3}\DSB, \\
	(1+t)^2 \bzeta S'(\bzeta) &= \DSA - i\sqrt{3}\DSB.
   \end{aligned}
\end{align}
We use several times $S_1=R_0/t$ and the expressions~\eqref{eq:S1} of $S(\zeta)$ and $S(\bzeta)$. At the end we  obtain a polynomial identity between $S(x)$, $P_0$, $R_0$, $R_1$, $S_2$, $B_1$, $B_2$, $x$, and $t$.

We can reduce to four the number of univariate series involved in this equation as follows. First, we expand the equation around $x=0$ at first order: this gives an expression of $P_0$ in terms of the $5$ other univariate series. We replace $P_0$ by this expression in the functional equation, and now expand at first order around $x=\zeta$: this gives
\[
  3t^2 S_2=- 3tR_0-3t^2-2B_1.
\]
In the end we get a cubic equation in $S(x)$:
\begin{align}
	\label{eq:Pol1}
	\Pol(S(x), R_0, R_1, \DSA, \DSB, t, x) = 0,
\end{align}
for a  polynomial $\Pol(x_0, x_1, x_2, x_3, x_4 ,t, x)$ with rational coefficients.  In the terminology of~\cite{BMJ06}, this is an equation with only one \emm catalytic variable,, namely $x$, as opposed to the original functional equation for $M(x,y)$ that had two catalytic variables, $x$ and $y$.

We can describe  the above polynomial $\Pol$ in a reasonably compact form thanks to some of its properties: first, when we introduce the series $\Sh(x)$ defined by~\eqref{eq:STcardanosubs}, there is no quadratic term (in $\Sh(x)$).
Then, the coefficients of the resulting equation are (almost) symmetric in $x$ and $\bx$, and they become symmetric if we introduce the series $\Sh(x)/(x-\bx)$. Now we can write the equation in terms of a new variable $y:=x+1+\bx$.
Then we observe one more property, namely that the coefficients are (almost) invariant when we replace $y$ by $\by (1+1/t)$. We refer to our \Maple\ worksheet for details.  If we denote
\beq\label{z-def}
z= t(x+1+\bx ) + \frac{1+t}{x+1+\bx }
\eeq
and
\[
  \St(x)= \frac{x+1+\bx }{x-\bx}\, \Sh(x)= \frac 1{x-\bx} \left( (x+1+\bx ) S(x)- \frac{R_0} t + \frac {2x} 3  + \frac\bx 3\right),
\]
then Equation~\eqref{eq:Pol1} reads:
{
\newcommand{\shrink}[1]{\!#1\!}
\newcommand{\myp}{\shrink{+}}
\newcommand{\mym}{\shrink{-}}
\begin{align}
\label{eqSc}
\begin{aligned}
 0 &= 27 {t}^{2} \left( 2 t\myp z\myp 1 \right)  \left( 10 t\mym 3 z\myp 1 \right) {\tilde S(x)}^{3} \\
 & +\big(  \left( 216 {t}^{2} \mym 27 {z}^{2} \myp 54 t \right) R_0^{2} \myp 27 t \left( 6 R_1 t\mym 6 R_1 z\myp 6 {t}^{2} \myp 2 zt \mym {z}^{2} \myp 2 B_2\myp t\myp z \right) R_0  
 \\
   & \quad - 9 {t}^{2} \left( 27 R_1^{2} \mym 27 R_1 t\myp 9 R_1 z\myp 5 {t}^{2} \mym 2 zt\myp 3 B_1\mym 3 B_2\mym 9 R_1\myp 6 t\mym 2 z\myp 1 \right)  \big) \tilde S(x)\\
   &  +\left( 72 {t}^{2} \mym 9 {z}^{2} \myp 18 B_1\myp 18 t \right) R_0^{2} \myp 9 t \left( 6 R_1 t\mym 6 R_1 z\myp 6 {t}^{2} \myp 2 zt \mym {z}^{2} \myp 2 B_1\myp 2 B_2\myp t\myp z \right) R_0 \\
   & \quad -{t}^{2} \left( 81 R_1^{2} \mym 81 R_1 t\myp 27 R_1 z\mym 5 {t}^{2} \mym 10 zt\myp 3 {z}^{2} \mym 9 B_1\mym 9 B_2\mym 27 R_1\myp 6 t\mym 4 z\myp 2 \right).
\end{aligned}
\end{align}
}

\section{The king walks: algebraicity}
\label{sec:quadratic}
In~\cite{BMJ06}, a general method to solve equations in one catalytic variable was developed, proving in particular that their solutions are systematically algebraic (provided the equation is \emm proper, in a certain natural sense). In Section~\ref{sec:system} we first use the results 
of~\cite{BMJ06} to obtain a system of four polynomial equations relating the series  $R_0, R_1, \DSA$, and $\DSB$. Combined with a few initial terms, this system characterizes these four series. 
Unfortunately, it turns out to be too big for us to 
obtain individual equations for each of the four series, be it by bare hand elimination or using Gr\"obner bases: we did obtain  polynomial equations for $R_0$ and  $R_1$, of degree $24$ in each case, but not for the other two series.  
Instead, as detailed in Section~\ref{sec:guess}, we have resorted to a guess-and-check approach,
consisting in \emph{guessing} such equations (of degree $12$ or $24$,
depending on the series), and then \emph{checking}  that they satisfy the
system.

\subsection{A polynomial system relating \texorpdfstring{$R_0$, $R_1$, $B_1$, and $B_2$}{R0, R1, B1, and B2}}
\label{sec:system}

We start from the cubic equation~\eqref{eq:Pol1}. The approach of~\cite{BMJ06} instructs us to consider the  series  $X$ (in $t$, or in a fractional power of $t$)), satisfying
\beq\label{dx0}
\Pol_{x_0}(S(X), R_0, R_1, \DSA, \DSB, t, X)       = 0, 
\eeq
where $\Pol_{x_0}$ stands for the derivative of $\Pol$ with respect to its
first variable.
The number of such series $X$ and their first terms depend only
on the first  terms of the series $S(x)$, $R_0$, $R_1$, $\DSA$, and $\DSB$; see~\cite[Thm.~2]{BMJ06}. We find that $6$ such series exist:
\begin{align*}
	X_1(t) &= i+2t^2+4t^3+(36-2i)t^4+ \LandauO(t^5), \\
  X_2(t) &= 
           -i+2t^2+4t^3+(36+2i)t^4+ \LandauO(t^5), \\
	X_3(t) &= \sqrt{t}+t+\frac{3}{2}t^{3/2} + 3t^2 + \frac{51}{8}t^{5/2} + 14t^3 + \LandauO(t^{7/2}), \\
	X_4(t) &= -\sqrt{t}+t-\frac{3}{2}t^{3/2} + 3t^2 -
                 \frac{51}{8}t^{5/2} + 14t^3 + \LandauO(t^{7/2}), \\
  	X_5(t) &= i \sqrt{t}-it^{3/2}+2i t^{5/2} + t^3 - 4i t^{7/2} + 2t^4 + \LandauO(t^{9/2}),\\
  X_6(t) &=
           -i \sqrt{t}+it^{3/2}-2i t^{5/2} + t^3 + 4i t^{7/2} + 2t^4 + \LandauO(t^{9/2}).
\end{align*}
Note that the coefficients of $X_1$ and $X_2$ (resp.~$X_5$ and $X_6$)
are conjugates of one another.  As discussed in~\cite{BMJ06}, each of these
series $X$ also satisfies
\beq\label{dx}
\Pol_{x}(S(X), R_0, R_1, \DSA, \DSB, t, X)        = 0, 
\eeq
where $\Pol_x$ is the derivative with respect to the last variable of $\Pol$, and of course
\beq\label{original}
\Pol(S(X), R_0, R_1, \DSA, \DSB, t, X)       = 0.
\eeq
Using this, we can easily identify two of the series $X_i$: indeed,
eliminating $\DSA$ and $\DSB$ between the three equations~\eqref{dx0},~\eqref{dx}, and~\eqref{original} gives a polynomial equation between $S(X)$, $R_0$, $R_1$, $t$, and $X$, which factors. Remarkably, its simplest non-trivial factor only
involves $t$ and $X$, and reads
\vspace{-1mm}
\beq\label{X34}
  X^2-t(1+X)^2(1+X^2).
\eeq
By looking at the first terms of the $X_i$'s and at the other
factors, one concludes that the above equation holds for $X_3$ and $X_4$, which are thus explicit. The other four series $X_i$ satisfy another equation in $S(X)$, $X$, $R_0$, $R_1$, and $t$, which we will not use.

      Let $D(x_1,\ldots,x_4,t,x)$ be the discriminant of $\operatorname{Pol}(x_0,\ldots,x_4,t,x)$ with respect to $x_0$. According to~\cite[Thm.~14]{BMJ06},
      each $X_i$ is a {\emph{double root}} of       $D(R_0,R_1, \DSA,\DSB,t,x)$, seen  as a polynomial in $x$.
Hence this polynomial, which involves four unknown series $R_0, R_1, \DSA,\DSB$, has (at least)
$6$ double roots. This seems  more information than we need.
However, we shall see that there is some redundancy in the $6$
series $X_i$, which comes from the properties of $\Pol$ 
that we used at the end of Section~\ref{sec:cat} to write it in a compact form.
 
We first observe that $D$ factors as 
\[
  D(R_0,R_1, \DSA,\DSB,t,x)= 27x^2t^2(1+x+x^2)^2 \Delta(x) 
  D_1(R_0,R_1, \DSA,\DSB,t,x),
\]
where $\Delta(x)$ is defined by~\eqref{Delta-def}, and $D_1$ has
degree $24$ in $x$. It is easily checked that none of the $X_i$'s are
roots of the prefactors, so they are double roots of $D_1$. But
we observe that $\bx^{12}D_1$ is symmetric in $x$ and $\bx$.  That is,
\[
  D_1(R_0,R_1, \DSA,\DSB,t,x)= x^{12}D_2(R_0,R_1,  \DSA,\DSB,t,x+1+\bx), 
\]
for some polynomial $D_2(x_1, \ldots, x_4,t,y)\equiv D_2(y)$  of degree 12 in
$y$. Since each $X_i$ is a double root of $D_1$, each series
$Y_i:=X_i+1+1/X_i$, for $1\le i\le 6$,  is a double root of $D_2$. The series $Y_i$, for $2\le i \le 6$, are easily seen from their first 
terms to be distinct, but the first terms of $Y_1$ and $Y_2$
suspiciously agree: one suspects (and rightly so), that $X_2=1/X_1$,
and carefully concludes that $D_2$ has (at least) $5$ double roots in
$y$. Moreover, since $X_3$ and $X_4$ satisfy~\eqref{X34}, the
corresponding series $Y_3$ and $Y_4$ are the roots of $  1+t=tY_i^2$,
that is, $Y_{3,4}=\pm \sqrt{1+1/t}$. The other roots start as follows:
\[ 
  Y_2= 1+4t^2+8t^3 + \LandauO(t^4),\qquad
  Y_{5,6}= \mp \frac i{\sqrt t} + 1+ t^2 \pm i t^{5/2} + \LandauO(t^3).
\]
 This is not yet the end of the story: indeed, $D_2$ appears to be 
almost symmetric in $y$ and $1/y$.  
More precisely, we observe that
\[
  D_2(R_0,R_1,  \DSA,\DSB,y)= y^6 D_3\left(R_0,R_1,  \DSA,\DSB,
    ty+\frac{t+1}y\right),
\]
for some polynomial $D_3(R_0,R_1,  \DSA,\DSB,t,z)\equiv D_3(z)$ of degree $6$ in $z$. It follows that each series $Z_i:= tY_i+(1+t)/Y_i$, for $2\le i \le 6$, is a root of $D_3(z)$, and even a double root, unless $tY_i^2=1+t$, which precisely occurs for $i=3,4$. One finds $Z_{3,4}= \pm 2\sqrt{t(1+t)}$,
\[ 
  Z_2= 1+ 2t -4 t^2 +\LandauO(t^3),\qquad 
  Z_{5,6}= 2t+2t^3 +\LandauO(t^4).
\]
Since $Z_5$ and $Z_6$ seem indistinguishable,  we   conclude that $D_3(z)$ has (at least) two double roots $Z_2$ and $Z_5$, and a factor $(z^2-4t(1+t))$ coming from the simple roots at $Z_3$ and $Z_4$.
We 

\pagebreak

\noindent can thus write
\[
  D_3(z)= \sum_{i=0}^6 d_i z^i=  \left({z}^{2}-4\,t(1+t) \right) (\alpha z^2 +\beta z +\gamma)^2,
\]
where the $d_i$ are explicit in terms of $R_0$, $R_1$, $B_1$, and $B_2$. We can determine $\alpha$, $\beta$, and $\gamma$ in terms of the $d_i$ by matching the three monomials of highest degree, and this gives:
\[
  D_3(z)= \sum_{i=0}^6 d_i z^i= {\frac { \left({z}^{2}-4\,t(1+t) \right) \left( 8\,{z}^{2}d_{6}^{2}+4\,zd_{{5}}d_{{
  6}}+16\,{t}^{2}d_{6}^{2}+16\,td_{6}^{2}+4\,d_{{4}}d_{{6}}-d_{5}^{2} \right) ^{2}}{64\,d_{6}^{3}}}.
\]
Extracting from this identity the coefficients of $z^0, \ldots, z^3$ gives four polynomial
relations between  the coefficients~$d_i$, resulting in four polynomial
relations between the four series $R_0$, $R_1$, $\DSA$, and $\DSB$.
We give below the degrees and number of terms in each of them.

\begin{table}[ht]
    \centering
  \begin{tabular}{l@{\hskip 8mm}ccccc|c}
    \toprule
  Degree in &$R_0$& $R_1$& $\DSA$& $\DSB$ &  $t$ &Number of terms\\
\midrule
Eq.~$1$ & $5$ & $3$ & $1$ & $1$ & $7$  & $72$  \\
Eq.~$2$ & $6$ & $4$ & $2$ & $2$ & $7$  & $132$ \\
Eq.~$3$ & $5$ & $5$ & $2$ & $2$ & $9$  & $192$ \\
Eq.~$4$ & $6$ & $6$ & $3$ & $3$ & $10$ & $276$ \\
               \bottomrule
\end{tabular}
\vskip 4mm
\caption{Properties of the four polynomial equations defining the four main  unknown series $R_0$, $R_1$, $B_1$, and $B_2$.}
\label{tab:system}
\end{table}
\vspace{-3mm}

We will now  check that the solution of this system is unique if we add the conditions $R_0=\LandauO(t^3)$, $R_1=\LandauO(t^2)$, $B_1=\LandauO(t^2)$, $B_2=\LandauO(t^2)$, which are directly deduced from the  definitions of $R(x)$, $B_1$, and $B_2$ in~\eqref{eq:RSdef} and~\eqref{eq:B1B2def}. We write accordingly $R_0=t^3 \tilde R_0$, $R_1=t^2 \tilde R_1$, $B_1=t^2 \tilde B_1$, $B_2=t^2 \tilde B_2$ in the system, divide each equation by a power of $t$ so that it becomes non-trivial  at $t=0$ (and, as it happens, linear in each series at this point). We finally form linear combinations of these four equations so that the system, evaluated at $t=0$, is triangular. We refer again to our \Maple \ sessions for details.

As explained at the beginning of this subsection, we have  been able to derive directly from this system polynomial equations (of degree $24$) for  $R_0$ and $R_1$ by successive eliminations, but not for the other two series. At the end we resorted to a guess-and-check approach.

\subsection{Guess-and-check}
\label{sec:guess}

The functional equation~\eqref{eq:kernelA1} defining $A(x,y)$ encodes a simple recurrence for the numbers $a_{i,j}(n)$ that count (weighted) walks of length $n$ by the positions of their endpoints $(i,j) \in \Cc$:
\begin{align*}
	a_{i,j}(n+1) &= a_{i-1,j-1}(n) + a_{i-1,j}(n) + a_{i-1,j+1}(n) + a_{i,j-1}(n) \\ & \quad  + a_{i,j+1}(n)  + a_{i+1,j-1}(n) 
	             + a_{i+1,j}(n) + a_{i+1,j+1}(n) ,
\end{align*} 
with $a_{i,j}(n)=0$ for $(i,j) \not\in \Cc$ and initial conditions $a_{0,0}(0)=2/3$, $a_{-2,0}(0)=a_{0,-2}(0)=1/3$, and $a_{i,j}(0)=0$ otherwise.
We implemented this recurrence in the programming language~$C$ using modular arithmetic and the Chinese remainder theorem to compute
these numbers up to $n=2000$ (this effectively bounds $i$ and $j$ to $2000$ as well, since $a_{i,j}(n)=0$ if $i>n$ or $j>n$).
For this purpose, we used approximately $100$ primes of size $\approx 2^{64}$, and we actually computed $3A(x,y)$ rather than $A(x,y)$, as it has integer coefficients.

The series $R_0$, $R_1$, $\DSA$, and $\DSB$ are related to $A(x,y)$ as follows. First, observe that by~\eqref{eq:RSdef} it holds that $R_0= t^2 A_{-1,0}$ and $R_1=t^2 A_{-2,0}$.
Second, for $\DSA$ and $\DSB$ defined in~\eqref{eq:B1B2def}, we also start from~\eqref{eq:RSdef}, which implies that
$S'(\zeta) = t M(0,\zeta) + t\zeta M_{y}(0,\zeta)$ where $M(0,y) = \sum_{j \geq 0} A_{-1,j} y^j$. 
In order to compute $M(0,\zeta)$ we used $\zeta^2 = -1-\zeta$, with $\zeta=(-1+i\sqrt 3)/2$, which implies that $6M(0,\zeta) = \alpha_1 + i\sqrt{3}\alpha_2$ with $\alpha_1,\alpha_2 \in \Z[[t]]$. Hence, the initial coefficients of the series $\alpha_1$ and $\alpha_2$ may be computed using modular arithmetic. The same holds for $\zeta M_y(0,\zeta)$, which then allows to reconstruct the coefficients of $B_1$ and $B_2$. Then we were able to guess polynomial equations satisfied by 
$R_0$, $R_1$, $\DSA$, and $\DSB$ using the \texttt{gfun} package in {\Maple}~\cite{gfun}. 
We refer for full details to  the accompanying \Maple\ worksheet.

Of course, the equations obtained for $R_0$ and $R_1$ coincide with those that we derived from the system of the previous subsection.
Details on the corresponding equations are shown in Table~\ref{tab:guessed}. We note that the degree $24$ equation for $B_2$ is in fact a degree~$12$ equation for $B_2^2$.

\begin{table}[ht!]
\centering

  \begin{tabular}{cccc}
 \toprule
    Generating function & Degree in $GF$ & Degree in $t$  & Number of terms \\
    \hline	
    $R_0$ &  $24$ & $36$ & $323$ \\
    $R_1$ &  $24$ & $36$ & $623$
       \\
    $\DSA$ &  $12$ & $24$ & $229$ \\
		$\DSB$ &  $24$ & $60$ & $477$ \\
   \bottomrule
  \end{tabular}
  \vskip 4mm
	\caption{Properties of the guessed polynomial equations for the four main unknown series $R_0$, $R_1$, $B_1$, $B_2$.}
 	\label{tab:guessed}
      \end{table}
\vspace{-2mm}

We now have to check that the guessed series satisfy the system obtained in the previous subsection. This turns out to be much easier once the algebraic structure of these series is elucidated. We explain in Appendix~\ref{app:subextensions}  how this can be done. We believe that this can be of interest to readers handling algebraic series of large degree. After this step, one obtains  expressions for $R_0$, $R_1$, $B_1$, and $B_2$ in terms of the series $v$ and $w$ of Section~\ref{sec:univariate}. We have not tried a direct check of the system based on the four guessed equations of Table~\ref{tab:guessed}.

\begin{prop}\label{prop:explicit-univariate}
  Let  $u,v, w \in \qs[[t]]$ be the series defined  in
  Section~\ref{sec:univariate} by~\eqref{u-def},~\eqref{v-def},
  and~\eqref{w-def}, respectively. Then the four series that occur in
  the equation in one catalytic variable defining $S(x)$ are:
  \allowdisplaybreaks
      \newcommand{\incnl}{\\[1.5mm]}
   \begin{align*}
  R_0 &= \frac{t}{2} \left(  \frac{\aw(1+2\av)}{1+4\av-2\av^3}-1\right), \incnl
    R_1 &= \frac{1}{6}\left( 1+2t + \frac{(1-2t)(1+2\av)(16\av^6+24\av^5+7\av^4-24\av^3-30\av^2-10\av-1)}{\aw(\av^4+8\av^3+6\av^2+2\av+1)(1+4\av-2\av^3)} \right), \incnl
    \DSA &= \frac{3\av^2 (1-8t) (1+4\av+\av^2)(\av^2-1)(1+2\av)}{2(1-3\av^2-4\av^3)^3(1+4\av-2\av^3)},\incnl
 	\DSB &= \frac{(1+2v)(1-2t)}{2w(v^4+8v^3+6v^2+2v+1)^2(2v^3-4v-1)} 
 	\left(4tv^{12}+68tv^{11}+16(22t+1)v^{10} \right. \\
      &\quad \left. +12(67t+2)v^9+5(192t-5)v^8+8(61t-10)v^7 
        -(286t+41)v^6         -2(394t-33)v^5    \right. \\
      &\quad \left.
        -(738t-113)v^4 -4(97t-17)v^3  -(126t-19)v^2 
        -2(12t-1)v -2t \right).    
\end{align*}
\end{prop}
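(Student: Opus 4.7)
The plan is to verify that the four explicit expressions satisfy the polynomial system of Section~\ref{sec:system} together with the prescribed initial conditions, and then conclude by uniqueness.

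First, I would recall that the system of four polynomial equations summarized in Table~\ref{tab:system}, combined with the valuation conditions $R_0=\LandauO(t^3)$, $R_1=\LandauO(t^2)$, $B_1=\LandauO(t^2)$, $B_2=\LandauO(t^2)$ (read off the defining expressions~\eqref{eq:RSdef} and~\eqref{eq:B1B2def}), admits a \emph{unique} solution in $\qs[[t]]^4$. This was already established at the end of Section~\ref{sec:system}, by rescaling each series to make the system non-trivial at $t=0$ and then forming linear combinations so that the system becomes triangular modulo $t$. Hence it suffices to check that the proposed expressions (i) satisfy the system and (ii) have the correct low-order expansion.

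Second, to \emph{produce} the expressions in the form stated, I would start from the guessed minimal polynomials of Table~\ref{tab:guessed} and decompose their algebraic structure as a tower
\[
\qs(t)\stackrel{4}{\hookrightarrow}\qs(t,\xzero)\stackrel{3}{\hookrightarrow}\qs(t,\av)\stackrel{2}{\hookrightarrow}\qs(t,\aw),
\]
using the procedure detailed in Appendix~\ref{app:subextensions}. In practice, one factors the minimal polynomials over successive algebraic extensions of $\qs(t)$ and identifies the degree-$4$, degree-$3$ and degree-$2$ sub-extensions, which are generated by $\xzero$, $\av$ and $\aw$, respectively. The relations~\eqref{u-def},~\eqref{v-def},~\eqref{w-def} are then recognized, and the guessed polynomials become rational expressions in $\av$ and $\aw$, precisely those written in the proposition.

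Third, the actual verification. Substituting the explicit expressions into each of the four equations of Section~\ref{sec:system} yields a rational function in $t$, $\av$, $\aw$. Using the quadratic relation $\aw^2 = 1+4\av-4\av^3-4\av^4$, one reduces modulo $\aw^2$ to land in $\qs(t,\av)$, and then one reduces modulo the minimal polynomial~\eqref{algv} of $\av$ over $\qs(t)$, eventually obtaining a polynomial identity in $t$ that is checked to vanish via computer algebra. The main obstacle is computational rather than conceptual: the candidate expression for $B_2$ involves a polynomial of degree $12$ in $\av$, so the two successive reductions produce sizeable polynomial identities whose simplification must be organized carefully; no single step is delicate, but the cumulative complexity is nontrivial. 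Once all four identities vanish, matching the initial power-series coefficients $R_0=t^3+\LandauO(t^4)$, $R_1=t^2+\LandauO(t^3)$, $B_1=\LandauO(t^2)$, $B_2=\LandauO(t^2)$ pins down the correct branch of the algebraic equations for $\xzero$, $\av$, $\aw$, and the proposition follows from uniqueness.
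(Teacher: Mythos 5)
Your proposal is correct and follows essentially the same route as the paper: substitute the candidate expressions into the polynomial system of Section~\ref{sec:system}, reduce successively modulo the relation~\eqref{w-def} for $\aw$ and then modulo the minimal polynomial~\eqref{algv} of $\av$, and conclude by the uniqueness argument already established via the valuation conditions. (One small note: reducing modulo $\aw^2 = 1+4\av-4\av^3-4\av^4$ a priori leaves a term linear in $\aw$; the paper explicitly observes that this coefficient vanishes, which is a check worth keeping explicit, and your stated leading term $R_1=t^2+\LandauO(t^3)$ should be $R_1=t^2/3+\LandauO(t^3)$, though only the valuations matter for uniqueness.)
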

\begin{proof}
 It suffices to check that the four series above satisfy the initial conditions $R_0=\LandauO(t^3)$, $R_1=\LandauO(t^2)$, $B_1=\LandauO(t^2)$, $B_2=\LandauO(t^2)$, and the system of 4 polynomial equations established in Section~\ref{sec:system}, the properties of which are summarized in Table~\ref{tab:system}. The first point is straightforward. Then we take each equation of the system in turn, replace the four unknown series by the above expressions, take the numerator of the resulting equation (which is a polynomial in $t$, $v$, and~$w$), and reduce it first modulo  Equation~\eqref{w-def} defining $w$ over $\qs(v)$. In each case, we note that the remainder does not involve $w$, an encouraging sign. Then we reduce further modulo  Equation~\eqref{algv}  defining $v$ over $\qs(t)$. In each case, we find zero, so that the system holds for the above values of $R_0$, $R_1$, $B_1$, and $B_2$. This completes the proof.
\end{proof}

Note that this proves in particular the announced expressions~\eqref{Cm10-expr} and~\eqref{Mm10-expr}  for the series $M_{0,0}=R_0/t^2$ and $M_{1,0}=R_1/t^2$; see~\eqref{eq:RSdef}. We claim that at this stage, we have proved the algebraicity of the series $P(x,y)$ and $M(x,y)$. Recall that by definition, walks in $\Cc$ ending in the first quadrant (resp.\ at negative abscissa) have \gfs
\[
  \frac 1  3 Q(x,y) + P(x,y), \qquad \left(\hbox{resp.} -\frac 1 3
    \bx^2 Q(\bx,y) + \bx M(\bx,y)
  \right).
\]

\begin{coro}
  The series $P(x,y)$ and $M(x,y)$ are algebraic over $\Q(t,x,y)$.
\end{coro}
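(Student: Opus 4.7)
The plan is to build up the algebraicity chain in the reverse direction to the derivation, exploiting the functional equations that were produced along the way.

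First, by Proposition~\ref{prop:explicit-univariate}, the four univariate series $R_0$, $R_1$, $B_1$, $B_2$ admit explicit rational expressions in the algebraic series $v$ and $w$, and are thus algebraic over $\Q(t)$. Substituting these expressions into the cubic equation~\eqref{eqSc} (equivalently~\eqref{eq:Pol1}) produces a non-trivial cubic polynomial equation for $\tilde S(x)$ (and hence for $S(x)$) whose coefficients now lie in an algebraic extension of $\Q(t,x)$. This shows that $S(x)$ is algebraic over $\Q(t,x)$, and since $M(0,x) = S(x)/(tx)$ by~\eqref{eq:RSdef}, so is $M(0,x)$.

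Next, I would recover $M(x,0)$ from equation~\eqref{eq:3Ms}, which, after isolating the $M(x,0)$ term, expresses $M(x,0)$ rationally in terms of $M(0,x)$, $M(0,\bx)$, $Y(x)$, $M_{0,0}$, $M_{1,0}$, $x$ and $t$. Each of these is algebraic: $Y(x)$ is a root of the quadratic $K(x,y)=0$ in $y$, the univariate series are algebraic by the first step, and $M(0,x)$ has just been shown to be algebraic (and so is $M(0,\bx)$). Hence $M(x,0)$ is algebraic over $\Q(t,x)$, and consequently $M(y,0)$ is algebraic over $\Q(t,y)$.

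It then remains to substitute $M(0,y)$, $M(x,0)$, $M(y,0)$, $M_{0,0}$ and $M_{1,0}$ into the functional equation~\eqref{eqMMM-king}: solving for $M(x,y)$ expresses this series as a rational function of the above algebraic series, of $K(x,y)$, and of $x,y,t$. Since $K(x,y) = 1-tS(x,y)$ is a unit of $\Q[x,\bx,y,\by][[t]]$, this identification is legitimate in the ring of formal power series in $t$, and therefore $M(x,y)$ is algebraic over $\Q(t,x,y)$. The algebraicity of $P(x,y)$ then follows at once from~\eqref{Psol-king}, which expresses $P(x,y)$ as a $\Q(x,y)$-linear combination of specializations of $M$.

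The only substantive point to verify is that the guessed expressions of $R_0,R_1,B_1,B_2$ do satisfy the polynomial system of Section~\ref{sec:system}, so that the cubic equation~\eqref{eqSc} genuinely has algebraic coefficients and $\tilde S(x)$ is then uniquely pinned down (as a power series in $t$ with the correct initial terms) among the three roots of the cubic, by the uniqueness statement for equations in one catalytic variable from~\cite{BMJ06}. This is precisely the content of Proposition~\ref{prop:explicit-univariate}; the rest of the argument is a mechanical cascade of algebraic closure properties.
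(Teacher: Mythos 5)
Your proof is correct and follows essentially the same backward cascade as the paper: start from the four algebraic univariate series of Proposition~\ref{prop:explicit-univariate}, deduce algebraicity of $S(x)$ (hence $M(0,x)$) from the cubic equation in one catalytic variable, then of $M(x,0)$ from the relation between $S$ and $R$ (you use~\eqref{eq:3Ms} where the paper uses its squared form~\eqref{eq:3Ms2}, which is inessential), then of $M(x,y)$ from the functional equation~\eqref{eqMMM-king}, and finally of $P(x,y)$ from~\eqref{Psol-king}. The remark that $K(x,y)$ is a unit of $\Q[x,\bx,y,\by][[t]]$ and the appeal to the uniqueness result of~\cite{BMJ06} are harmless embellishments not present in the paper's argument but raising no issues.
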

\begin{proof}
  We work our way backwards starting from the 4 univariate algebraic series of Proposition~\ref{prop:explicit-univariate}. Since $S(x)=tx M(0,x)$ satisfies a cubic equation 	$\Pol(S(x), R_0, R_1, \DSA, \DSB, t, x) = 0$,
  where the polynomial $\Pol$
  has non-zero leading coefficient in its first variable, $S(x)$ and $M(0,x)$ are algebraic of degree at most $72$. We will see that this bound is tight. It then follows from~\eqref{eq:3Ms2} that $R(x)=t^2M(x,0)$ is algebraic as well. We now return to~\eqref{eqMMM-king}, which expresses $M(x,y)$: since $M_{0,0}=R_0/t^2$ and $M_{1,0}=R_1/t^2$, we conclude that $M(x,y)$ is algebraic. We finally use the relation~\eqref{Psol-king} between $P(x,y)$ and $M(x,y)$ to conclude that $P(x,y)$ is algebraic.   
\end{proof}

In the next subsection, we determine the degree of all algebraic series of interest, and give closed form expressions for $S(x)$ and $R(x)$ in terms of the already defined series $v$ and $w$, and a ``simple'' cubic extension of $\Q(t,v,x)$.

\subsection{Back to \texorpdfstring{${S(x)}$}{S(x)} and \texorpdfstring{${R(x)}$}{R(x)}}
\label{sec:bivariate-sol}

In this subsection, we prove that $S(x)$ and $R(x)$ belong to the same cubic extension of $\Q(t,w,x)$, and describe this extension in (reasonably) compact terms.
We give  two descriptions of this extension by rational parametrizations (in fact, a third one  hides in Appendix~\ref{app:P0}). Remarkably, they define cubic extensions of $\Q(t,v,x)$ rather than $\Q(t,w,x)$.
The first one is in terms of the variable $\tilde y:= t(x+\bx+1)/(1-2t)$ and involves $v$ but not $t$. The second one, however, involves the original variable $x$, and now $t$ and $v$.

More precisely, let  $\Pun\equiv \Pun(x)$ be the unique series of the form $\Pun=xt^2+\LandauO(t^3)$  satisfying
\beq\label{P1-param}
  \tilde y = \frac\kappa {\Pun} \frac{\num(\Pun)}{\num(r_1/\Pun)},
\eeq
where 
\[
\num(U)=   
U+{v}^{2}w^2 -{\frac {
{v}^{4} w^2 \left( v^2-1 \right)  \left( {v}^{2}+v+1
 \right) }{U}},
\]
with $w^2=1+4v-4v^3-4v^4$ as before, and
\begin{align}
  \kappa&={\frac { \left( {v}^{3}-3\,v-1 \right) ^{2} \left( {v}^{2}+v+1
      \right) {v}^{2}}{{v}^{4}+8\,{v}^{3}+6\,{v}^{2}+2\,v+1}}, \notag \\
  \quad 
    r_1&=-{v}^{3}w^2 \left( {v}^{2}+v+1 \right)  \left( {v}^{3}-3\,v-1 \right). \label{kappa-r1}
\end{align}
Let $\Ptwo\equiv \Ptwo(x)$ be the unique series $\Ptwo=\bx+\LandauO(t)$ that satisfies
\beq\label{xP2}
  x=\Ptwo\, \frac{M(\Ptwo)}{M(1/\Ptwo)},
\eeq
where $M(U)= 1/U+\alpha+\beta U$, with
\[
  \alpha= v -{\frac { \left( {v}^{3}-3\,v-1 \right) \beta}{{v}^{2}+v+1}}
\]
and
\[
   \beta = \frac{\left( {v}^{2}+v+1 \right)
    \left( \left( 2\,{v}^{5}+15\,{v}^{4}+20\,{v}^{3}
+16\,{v}^{2}+6\,v+1 \right) t+ v \left( {v}
^{3}-3\,v-1 \right)\right)
}
{t \left( {v}^{4}+8\,{v}^{3}+6\,{v}^{2}+2\,v+1 \right)  \left( 2\,{v}^{3
}+3\,{v}^{2}+6\,v+1 \right) 
}.
\]
The series $\Pun$ and $\Ptwo$ generate the same cubic extension of $\Q(t,v,x)$. In particular,
\beq\label{P12}
  \frac 1 \Pun = a\left( \Ptwo+ \frac 1 \Ptwo + {\frac {{v}^{2}+4\,v+1}{{v}^{2}+v+1}}\right)
\eeq
with
\[
  a={\frac {\beta\,t \left( {v}^{4}+8\,{v}^{3}+6\,{v}^{2}+2\,v+1 \right) 
}{ {v}^{3}(1-2t) \left( v^2-1 \right)   \left( {v}^{2}+v+1 \right) 
 \left( {v}^{3}-3\,v-1 \right)  }}.
\]
One can also express $\Ptwo$ as an element of $\Q(t,v,x,\Pun)$ by combining~\eqref{xP2} and~\eqref{P12}. Finally, one can check that $\Pun$ and $\Ptwo$ have degree $36=12\times 3$ over $\Q(t,x)$. Therefore, we have $\Q(t,v,x,\Pun)=\Q(t, x, \Pun)=\Q(t,x,\Ptwo)$.

\begin{prop}\label{prop:RhatShat} 
 Let $v$ and $w$ be the series of $\Q[[t]]$ defined by~\eqref{v-def} and~\eqref{w-def}. Let $\Pun(x)$ and $\Ptwo(x)$ be defined above. The series $R(x)=t^2M(x,0)$ and $S(x)=txM(0,x)$ are algebraic of degree $72$ over
  $\qs(x,t)$ and belong to $\qs(t,x,\aw,\Pun)=\qs(t,x,\aw,\Ptwo)$.
	More precisely, the series
\beq\label{St-def}
          \St(x)= \frac 1{x-\bx} \left( (x+1+\bx ) S(x)- \frac{R_0} t + \frac {2x} 3  + \frac\bx 3\right)
\eeq
          and
  \beq\label{eq:Rhat}
    \Rh(x)=	3(x+1+\bx)R(x) + 3R_1  
    + \frac{1 - t\bx(x+\bx)(x+1)^2}{t(x+1+\bx)}(R_0-t\bx) + t(1+\bx^2)
  \eeq
  belong respectively to $\qs(t,x,\Pun (x))$
  and  $\aw\,\qs(t,x,\Pun (x))$. In particular,  
\[
\tilde S(x)+\frac 1 3 =-
\frac{{v}^{2}w^2 (1+2v)\left( {v}^{2}+4\,v+1 \right) ^{2}}{2v^3-4v-1} 
\frac{1 }{D(\Pun) D(r_1/\Pun)}
\]
where $r_1$ is given by~\eqref{kappa-r1} and 
\[
D(U)=(v+1) U + v w^2(v^2-1) + (v-1) \frac{r_1} U.
\]
Recall that $R_0$ and $R_1$ lie in $\Q(t,w)$, and are given by Proposition~\ref{prop:explicit-univariate}.
\end{prop}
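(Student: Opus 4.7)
The strategy is to verify directly the closed-form expression for $\tilde S(x)+1/3$ given in the statement, and to derive every other assertion from it. The starting point is the cubic equation~\eqref{eqSc} satisfied by $\tilde S(x)$, whose coefficients involve $R_0,R_1,B_1,B_2,t,x$. Substituting the explicit formulas of Proposition~\ref{prop:explicit-univariate} for these four series (and using that $w^2=1+4v-4v^3-4v^4\in\mathbb{Q}(v)$) turns~\eqref{eqSc} into a cubic in $\tilde S(x)$ over $\mathbb{Q}(t,v,w,x)$. Since $R_0$ and $B_2$ carry a factor of $w$ while $R_1$ carries a factor of $1/w$, after multiplying through by an appropriate power of $w$ the cubic can be written as $E(\tilde S)+w\,F(\tilde S)=0$ with $E,F\in\mathbb{Q}(t,v,x)[\tilde S]$.

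The proposed closed form
\[
\tilde S(x)+\tfrac13=-\frac{v^2w^2(1+2v)(v^2+4v+1)^2}{2v^3-4v-1}\cdot\frac{1}{D(P_1)\,D(r_1/P_1)}
\]
is a rational function of $t,v,x,P_1$, with $w^2$ acting as an element of $\mathbb{Q}(v)$; in particular it is \emph{even} in $w$. To verify it, I would plug it into the cubic, clear denominators, and reduce the resulting polynomial expression first modulo the cubic defining $P_1$ obtained from~\eqref{P1-param} after clearing denominators, then modulo $w^2-(1+4v-4v^3-4v^4)$, and finally modulo the minimal polynomial~\eqref{algv} of $v$ over $\mathbb{Q}(t)$. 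Both $E$ and $F$ should reduce to zero. Since the cubic has three roots and the correct one is singled out by the initial condition $S(x)=x(1+x)t^2+\LandauO(t^3)$ from~\eqref{eq:RSdef}, matching a few initial terms identifies the correct branch and settles the closed form. The same modular-reduction technique then verifies the identity~\eqref{xP2} defining $P_2$ and the link~\eqref{P12} between $P_1$ and $P_2$, so that $\mathbb{Q}(t,x,P_1)=\mathbb{Q}(t,x,P_2)$.

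Once $\tilde S(x)$ is known, equation~\eqref{Rh-Sh} yields $\hat R(x)=\sqrt{\Delta(x)}\bigl(\tilde S(x)-2\tilde S(1/x)\bigr)$, and~\eqref{eq:Rhat} then expresses $R(x)$ in terms of $\tilde S(x),\tilde S(1/x),R_0,R_1,t,x$. The involution $x\mapsto 1/x$ fixes $x+1+\bar x$ and therefore fixes $\tilde y$ and the cubic defining $P_1$, so $P_1(1/x)$ is again a root of that cubic and the difference $\tilde S(x)-2\tilde S(1/x)$ remains in $\mathbb{Q}(t,v,x,P_1(x))$ after a short manipulation. It remains to absorb $\sqrt{\Delta(x)}$ into $w$: one checks, again by reduction modulo the defining polynomials of $P_1$ and $v$, that $\sqrt{\Delta(x)}/w$ lies in $\mathbb{Q}(t,x,P_1)$, which yields $\hat R(x)\in w\,\mathbb{Q}(t,x,P_1)$ and, via~\eqref{St-def}, $S(x),R(x)\in\mathbb{Q}(t,x,w,P_1)$. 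The tower $\mathbb{Q}(t,x)\subset\mathbb{Q}(t,v,x)\subset\mathbb{Q}(t,v,w,x)\subset\mathbb{Q}(t,v,w,x,P_1)$ has successive degrees $12,2,3$, giving degree at most $72$; tightness follows from the degree-$72$ equation for $M(0,y)$ whose first terms are recorded via the guess-and-check of Section~\ref{sec:guess}.

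The main obstacle is purely computational: the polynomial identities to verify are dense in $P_1,v,w,x$, with hundreds of monomials after clearing denominators, and the three successive modular reductions must be performed in the right order to keep intermediate expressions manageable. The genuinely conceptual difficulty — discovering the parametrization~\eqref{P1-param} — was already resolved by the guess-and-check procedure described in Section~\ref{sec:guess} and in the appendix, so the task reduces to a large but deterministic Maple verification.
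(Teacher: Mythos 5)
Your strategy for the first part — verifying the closed form for $\St(x)+1/3$ by substituting the explicit expressions of Proposition~\ref{prop:explicit-univariate} into~\eqref{eqSc} and reducing modulo the defining polynomials of $P_1$, $w$, and $v$, then picking the branch by initial terms — is a legitimate alternative to what the paper does. The paper instead \emph{derives} the parametrization: it reduces~\eqref{eqSc} to a cubic over $\Q(v,\tz)$ not involving $t$, observes it has genus $0$, and parametrizes; your verification route reaches the same place without re-deriving the genus-$0$ structure, which is fine since the parametrization has already been exhibited. The degree count $12\cdot 2\cdot 3=72$ and the tightness check are also in the right spirit.

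There is, however, a genuine gap in the second part, where you treat $\Rh(x)$. You write $\Rh(x)$ as $\sqrt{\Delta(x)}$ times a combination of $\St(x)$ and $\St(\bx)$, then assert that (a) $\St(x)-2\St(\bx)$ ``remains in $\Q(t,v,x,P_1(x))$ after a short manipulation'' and (b) $\sqrt{\Delta(x)}/w$ lies in $\Q(t,x,P_1)$. Neither of these is obvious, and in fact they are \emph{equivalent} to each other, and both are equivalent to asking that the cubic minimal polynomial of $\St(x)$ over $\Q(\tz,v)$ split completely over $\Q(t,x,P_1)$ — i.e., that $\sqrt{\delta(\tz)}$ (the square root of the discriminant) lie in that field. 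Nothing in your argument establishes this; the observation that $P_1(\bx)$ is another root of the same cubic does not place it (or $\St(\bx)$) in $\Q(t,v,x,P_1(x))$, since a cubic over a field can split off one linear factor without splitting completely. The proposition only asserts that the \emph{product} $\sqrt{\Delta(x)}\,\bigl(\Sh(x)-2\Sh(\bx)\bigr)$ lands in $w\,\Q(t,x,P_1)$, and a priori each factor could sit in a quadratic extension with the extra square roots cancelling in the product. The paper circumvents exactly this issue: writing $s_1=\St(x)$, $s_2=\St(\bx)$, $s_3=-s_1-s_2$ for the three roots of the depressed cubic $s^3+ps+q$, it uses the resolvent identity $\sqrt{\delta(\tz)}\,(s_2-s_3)=-6p\,s_1^2+9q\,s_1-4p^2$, which expresses the needed combination purely in terms of $s_1\in\Q(t,x,P_1)$ and $p,q\in\Q(\tz,v)$ — never requiring $s_2$ or $\sqrt{\delta(\tz)}$ individually to lie in $\Q(t,x,P_1)$. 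It then only has to show that the quotient $\Delta(x)/(w^2\delta(\tz))$ is a square in $\Q(t,v,x)$, which is the explicit (and checkable) identity~\eqref{ratio:delta}. You would need to either supply a proof that the cubic splits over $\Q(t,x,P_1)$ (which may well be false), or adopt the paper's resolvent argument. As a minor point, your formula $\Rh(x)=\sqrt{\Delta(x)}\bigl(\tilde S(x)-2\tilde S(1/x)\bigr)$ drops the prefactor $(x-\bx)/(x+1+\bx)$ and flips a sign when passing from $\Sh$ to $\St$; this is cosmetic, but worth fixing.
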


\begin{proof}
  We  return to the cubic equation that defines $S(x)$, written in the form~\eqref{eqSc} in terms of~$z$ and $\St(x)$ and we replace  $R_0$, $R_1$, $\DSA$, $\DSB$ by their expressions in terms of $t$, $v$, and $w$. 
Then we observe that only even powers of $w$ occur: hence, using the defining equation~\eqref{w-def} of $w$, we obtain a cubic equation for $\tilde S(x)$ involving only $t$, $v$, and of course the variable $z$ defined by~\eqref{z-def}. This  equation has degree 
$2$ in~$z$.  We lower the degree in $t$ to $1$ using the minimal polynomial~\eqref{algv} of $v$.  Now the coefficient of $z^2$ does not involve $t$, the coefficient of $z^1$ is a multiple of $(1-2t)$, and the coefficient of $z^0$ is a multiple of $(1-8t)$. But observe that the minimal equation of $v$ can also be written as
\[
  \frac{1-8t}{(1-2t)^2}=  {\frac { \left( {v}^{2}+4\,v+1 \right)  \left( 4\,{v}^{3}+3\,{v}^{2}-1
 \right) ^{3}}{ \left( 4\,{v}^{4}+4\,{v}^{3}-4\,v-1 \right)  \left( {v
    }^{4}+8\,{v}^{3}+6\,{v}^{2}+2\,v+1 \right) ^{2}}}.
\]
  This gives a cubic equation for $\tilde S(x)$, with coefficients in $\Q(v,\tz)$ where
\beq\label{zt-def}
\tz= \frac z{1-2t}= \frac{yt}{1-2t} + \frac{1+t}{y(1-2t)},
\eeq
where as before $y=x+1+\bx$. It is remarkable that this equation does \emph{not}  involve $t$. Its genus (in $\tz$ and $\tilde S$) is found to be zero and thus this equation admits a rational parametrization. We give one in Appendix~\ref{app:P0} (see~\eqref{zt-P0}), in terms of a series denoted by $\Pzero(x)$, for which we have
\[
  \tilde S(x)+\frac 1 3 =
-  {\frac {{v}^{2} \left( v^2-1 \right)  \left( 2 v+1 \right)   \left( {v}^{2}+4 v+1 \right) ^{2}}{ \left( 2 {v}^{3}-4 v-
   1 \right)
\left(w^2{\Pzero}^{2}+{v}^{2}
 \left( v^2-1 \right)  \left( 2 v+1 \right)  \left( 2 {v}^{3}+3 {v}^{2}+6 v+1 \right)\right) 
}}.
\]
But it may be better to parametrize our extensions in terms of $x$ than $\tilde z$. Let us first get back to $y=x+1+\bx$, or rather to    $\tilde y = {yt}/({1-2t})$, and observe that $\tilde z$ can be written as
\beq\label{zt-yt}
  \tilde z= \tilde y + \frac {t(1+t)}{(1-2t)^2}\frac 1 {\tilde y}= \tilde y + \frac q{\tilde y}
\eeq
where
\[
 q= -{\frac {v \left( {v}^{2}+v+1 \right)  \left( {v}^{3}-3\,v-1 \right) ^{3}}
     { 
  w^2  \left( {v}^{4}+8\,{v
}^{3}+6\,{v}^{2}+2\,v+1 \right) ^{2}}}
.
\]
This means that $\tilde S(x)$ also satisfies a cubic equation with coefficients 
in $\Q(\tilde y, v)$, again not involving $t$.
This equation is also found to have genus $0$ (in $\tilde y$ and $\tilde S$) and can be parametrized rationally by introducing the series $\Pun$ defined by~\eqref{P1-param}. Indeed, if, in the equation relating $\tilde y $ and $\tilde S$,  we replace $\tilde y$ by its expression in terms of $\Pun$, the equation factors into a linear term in $\tilde S$, and a quadratic one. Provided we choose the correct determination of $\Pun$, given by $\Pun=xt^2+\LandauO(t^3)$, then the term that vanishes is the linear one, and this gives the expression of $\tilde S$ stated in the proposition.
Observe that replacing $\Pun$ by $r_1/\Pun$ in~\eqref{P1-param}
replaces $\tilde y $ by $q/\tilde y$ (because
$\kappa^2=q r_1$), and thus leaves~$\tilde z $ unchanged; see~\eqref{zt-yt}. Analogously, the series $\Pzero(x)$ that parametrizes the equation in $\zt$ and $\tilde S$ (see Appendix~\ref{app:P0}) is invariant by this transformation, and reads 
\[
  \Pzero=\frac{1-v^2}{w^2}\left(\Pun +v^2w^2 +\frac{r_1}{\Pun}\right).
\]

One can actually go even further, as the equation that relates the original variable $x$ and the series $\Pun$ (now with coefficients in $\Q(t,v)$) also has genus zero. It can be parametrized by introducing the series $\Ptwo$ defined by~\eqref{xP2}. Indeed, if we replace, in the equation relating~$x$ and~$\Pun$, the variable $x$ by its expression in terms of $\Ptwo$, we observe again a factorization, which leads to~\eqref{P12} once the correct determination of $\Ptwo$ is chosen.

One readily checks that $\Pun$ and $\Ptwo$ (and $\Pzero$ as well) have degree $36$ over $\Q(t,x)$.

Thus $\tilde S(x)$ belongs to $\Q(t,x,\Pun)=\Q(t,x,\Ptwo)$, while $S(x)$, which involves $R_0$ and hence~$w$, belongs to $\Q(t,w,x,\Pun)=\Q(t,w,x,\Ptwo)$ and has degree at most $72$. To prove that this bound is tight, one can eliminate $w$ and $v$ in the equation defining $S(x)$. It is enough to do it for $x=2$, for instance, as we find that $S(2)$ has degree $72$.

\medskip

We now wish to determine the series $R(x)=t^2M(x,0)$, which is expressed in terms of $S(x)$ and $S(\bx)$ in~\eqref{Rh-Sh}. Equivalently, 
\[
  \hat R (x)= \frac{(x-\bx)\sqrt{\Delta(x)}}{x+1+\bx } \left( \St(x) +2 \St(\bx)\right).
\]
We could of course eliminate $\St(x)$ and $\St(\bx)$  to determine a polynomial equation satisfied by $\Rh(x)$ over $\qs(t,x,v)$, but there is an algebraic structure in the above equation, which will 
save us these calculations. Let us denote $\Pol(s)=s^3+ps+q$ the monic minimal polynomial of $\St(x)$ over $\Q(\zt,v)$. One of its root is of course $s_1=\St(x)$, another one is $s_2=\St(\bx)$ (because $\zt$ is invariant under $x\mapsto \bx$) and the third one is $s_3=-\St(x)-\St(\bx)$ (because there is no quadratic term in $\Pol$). Hence $  \St(x) +2 \St(\bx)=s_2-s_3$. It is not hard to see that, if we denote by $\delta(\zt)=-4p^3-27q^2$ the discriminant of $\Pol(s)$, and choose its square root so that
\[
  \sqrt{\delta(\zt)} = (s_1-s_2)(s_1-s_3)(s_2-s_3),\]
then
\[
  \sqrt{\delta(\zt)}   (s_2-s_3)=-6p s_1^2 + 9q s_1 -4p^2.
\]
Hence
\beq\label{Rh-expr}
  \hat R (x)=\frac{(x-\bx)}{x+1+\bx  }\sqrt{\frac{\Delta(x)}{\delta(\zt)}}
  \left( 9q\St(x) -6p\St(x)^2-4p^2\right),
\eeq
for some $p, q \in \Q(\zt,v)$. Hence the proof of the proposition will be complete if we prove that ${\Delta(x)}/(w^2{\delta(\zt)})$ is a square in $\Q(t,v,x)$. 
After several reductions, described in our \Maple\ session, we obtain
\beq\label{ratio:delta}
  \sqrt{\frac{\Delta(x)}{\delta(\zt)}}=
  \frac{w\Delta(x)^2 (v^4+8v^3+6v^2+2v+1)^2(2v^3-4v-1)^3(x-\bx)^3}
  {y^2(ty^2-t-1)(v^2+4v+1)^2(v^2-1)(2v+1)(1-2t)^2P(\zt)}
\eeq
where we denote as before $y=x+1+\bx$ and
{
\newcommand{\shrink}[1]{\hspace{-0.1em}#1\hspace{-0.1em}}
\newcommand{\myp}{\shrink{+}}
\newcommand{\mym}{\shrink{-}}
\begin{align*}
    P(\zt) &=
- w^2 {\zt}^{2} \left( {v}
^{4}\myp8 {v}^{3}\myp6 {v}^{2}\myp2 v\myp1 \right) ^{2}\\
&\quad - ( v\mym1 ) 
 \left( 8 {v}^{7}\myp16 {v}^{6}\myp40 {v}^{5}\myp72 {v}^{4}\myp85 {v}^{3}\myp53
 {v}^{2}\myp13 v\myp1 \right)   \left( {v}^{4}\myp8 {v}^{3}\myp6 {v}^
{2}\myp2 v\myp1 \right)\zt\\ 
&\quad + 2 v \left( 2 {v}^{11}\myp2 {v}^{10}\myp12 {v}^{9}\myp18
 {v}^{8}\myp23 {v}^{7}\myp22 {v}^{6}\myp5 {v}^{5}\mym29 {v}^{4}\mym57 {v}^{3}\mym
40 {v}^{2}\mym11 v\mym1 \right) .
\end{align*}
}%
From this point on, we can combine~\eqref{Rh-expr} and~\eqref{ratio:delta} with  the various parametrizations (by $\Pzero$, $\Pun$, or $\Ptwo$) introduced above to write closed-form expressions for $\Rh(x)$. We give one in Appendix~\ref{app:P0} in terms of $\Pzero$; see~\eqref{Rhat-P0}.
The degree of $R(x)$ is clearly $72$ at most. We determine it at $x=2$ by elimination of $\Pzero$, $w$, and $v$, and find it to be $72$; hence the bound is tight.
\end{proof}

\begin{figure}[!ht]
\hskip -25mm   \scalebox{1}{\input{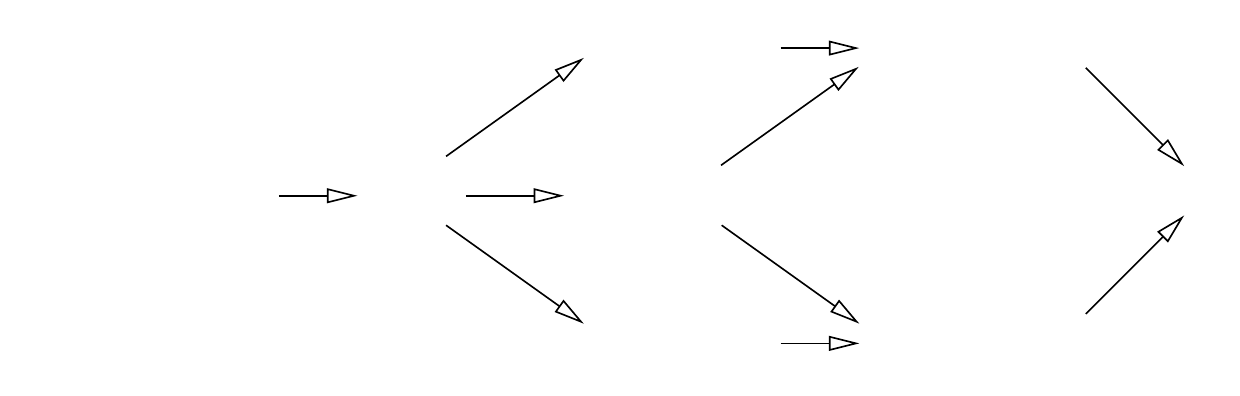_t}}
\caption{Structure of the various fields involved in the solution of king walks in $\Cc$. We have indicated the degrees, and where the main series lie.}
\label{fig:algSR}
\end{figure}

\begin{proof}
  [End of the proof of Theorem~\ref{theo:king}.] 
  We have proved all statements of this theorem, except for the
  degrees of the trivariate \gfs \ $M(x,y)$, $P(x,y)$, and $A(x,y)$. It
  is clear from~\eqref{eqMMM-king},~\eqref{Psol-king}, and~\eqref{A-PM}
  that they belong to   $\KK(w, \Pun(x), \Pun(y))$,
  where $\KK=\Q(t,x,y)$
  and hence that they have degree at most $72\times 3=216$ over
  $\KK$. We check (by specializing $x$, $y$, and even $t$ to real
  values where all series converge, like $x=3$, $y=2$, and $t=1/100$) that there is no unexpected degree reduction.
\end{proof}
We get the final picture of the algebraic extensions shown in Figure~\ref{fig:algSR}.

\subsection{Some interesting univariate series}
\label{sec:proofcoro}
In this subsection we examine various univariate series of interest, like those that are involved in the enumeration of all walks in $\Cc$, or of walks ending on the boundaries of $\Cc$. We also prove the results of Proposition~\ref{cor:coeffs} dealing with walks ending at a specific point, and the asymptotic results of Corollary~\ref{coro:asy}.

\begin{prop}\label{prop:univariate_series}
  The series $R(1)=t^2M(1,0)$ is algebraic of degree $24$  over $\Q(t)$ and belongs to $\Q(t,w)$. More precisely,
  \[
  R(1)+\frac t 3 =-\frac{\numsmall} {3w(2v^3-4v-1)(v^4+8v^3+6v^2+2v+1)(2v^3+3v^2+6v+1)},
\]
where
\begin{align*}
  \numsmall&=v \left( v+1 \right)  \left( 2 {v}^{3}+4 {v}^{2}+5 v+1 \right) 
 \left( 4 {v}^{6}+3 {v}^{5}-8 {v}^{4}-6 {v}^{3}+12 {v}^{2}+11 v+
   2 \right)\\
&\quad +\left( 96 {v}^{10}+272 {v}^{9}+446 {v}^{8}+384 {v}^{7}+3 {v}^{6
}-464 {v}^{5}-553 {v}^{4}-298 {v}^{3}-87 {v}^{2}-14 v-1 \right) t
.
\end{align*}
The series $S(1)=tM(0,1)$ is algebraic of degree $48$ over $\Q(t)$ and belongs to a quadratic extension of $\Q(t,w)$. More precisely,
\beq\label{S-T}
  S(1)+\frac 1 2 =w\T,
\eeq
where $\T=1/2+ \LandauO(t)$ has degree $2$ over $\Q(t,v)$, and satisfies~\eqref{T-def} (in Appendix~\ref{sec:appendixC}).
\\
The series $M(1,1)$ and $P(1,1)$ are algebraic of degree $48$ and belong to  $\Q(t,w,\T)$. \\
The series $A(1,1)$ and $A_{0,0}$  are algebraic of degree $24$ and belong to $\Q(t,w)$. More precisely,
\[
  A(1,1)+\frac 1 {3t}= -\frac{w \times \numsmall'}{3t(1-2t)(4v^3+3v^2-1)^2(2v^3-4v-1)(2v^3+3v^2+6v+1)},\]
with
\begin{align*}
  \numsmall'&=2(4v^3+3v^2+4v+1)(4v^3+3v^2-1)^2t\\
 &\quad +(v+1)(16v^9+72v^8+94v^7+86v^6+3v^5+61v^4+68v^3+24v^2+7v+1)  ,
\end{align*}
while
\[
  A_{0,0}=P_{0,0} = \frac{2 R_1}{t^2},
\]
where $R_1$ is given in Proposition~\ref{prop:explicit-univariate}.
\end{prop}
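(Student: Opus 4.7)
My plan is to specialize the parametrization from Proposition~\ref{prop:RhatShat} and the bivariate equation~\eqref{eqMMM-king} at the required points, and then to verify the stated closed-form expressions by reduction modulo the minimal polynomials of $v$ and $w$.

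The identity $A_{0,0}=P_{0,0}$ is immediate from $[x^{0}y^{0}]$-extraction in the decomposition~\eqref{eq:Asep}: neither $\bx M(\bx,y)$ nor $\by M(\by,x)$ contains a constant monomial in $(x,y)$. Combined with $P_{0,0}=2M_{1,0}$ (derived in Section~\ref{sec:red_quadrant:king} by extracting the coefficient of $y^{0}$ in~\eqref{PM-king}) and Proposition~\ref{prop:explicit-univariate}, this gives $A_{0,0}=2R_{1}/t^{2}\in\Q(t,w)$, with the announced degree.

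For $R(1)=t^{2}M(1,0)$ and $S(1)=tM(0,1)$, I would specialize the relevant equations at $x=1$. From~\eqref{Rh-Sh} with $\Sh(\bx)=\Sh(x)$ at $x=1$, I obtain $\Rh(1)=-\sqrt{\Delta(1)}\,\Sh(1)$, while~\eqref{eq:2MsSt} at $x=1$ degenerates to $\Delta(1)\,\Sh(1)^{2}=\Pol(P_{0},R_{0},R_{1},t,1)/(9t^{2})$. Using the closed forms of $R_{0}$, $R_{1}$, $B_{1}$, $B_{2}$ from Proposition~\ref{prop:explicit-univariate}, I would verify that this rational element in fact lies in $\Q(t,v)$, and furthermore that $\Delta(1)\,\Sh(1)^{2}$ is a square of the form $\Delta(1)\,w^{2}\tilde{\T}^{2}$ with $\tilde{\T}^{2}\in\Q(t,v)$. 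This yields $\Sh(1)=w\tilde{\T}$ with $\tilde{\T}$ of degree $2$ over $\Q(t,v)$; moreover $\sqrt{\Delta(1)}\tilde{\T}=-\Rh(1)/w\in\Q(t,v)$, proving $\Rh(1)\in w\,\Q(t,v)\subset\Q(t,w)$ and hence $R(1)\in\Q(t,w)$ via the affine relation~\eqref{eq:Rhat}. The closed forms of $R(1)$ and of $\T:=(S(1)+1/2)/w$ are then extracted from the parametrization of Proposition~\ref{prop:RhatShat} at $x=1$ (where $\tilde{y}=3t/(1-2t)$): the expression for $\St(1)+1/3$ involves $\Pun(1)$ only through $D(\Pun(1))D(r_{1}/\Pun(1))$, which after the substitution $T=\Pun+r_{1}/\Pun$ becomes a quadratic in $T$ whose evaluation at $x=1$ lives in $\Q(t,v)$, and $\T$ is read off from the other factor of this quadratic (giving equation~\eqref{T-def}).

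For $M(1,1)$, $P(1,1)$, and $A(1,1)$, I specialize~\eqref{eqMMM-king} at $x=y=1$. Both multipliers $x-\bx$ and $1+\by^{2}-2\bx\by$ vanish at this point, leaving the simple linear relation
\[
(1-8t)\bigl(2M(1,1)-M(0,1)\bigr)=\tfrac{2}{3}-3t\,M(1,0)-t\,M_{1,0}.
\]
Combined with $P(1,1)=2\bigl(M(1,1)-M(0,1)\bigr)$ from~\eqref{Psol-king} at $x=y=1$, this expresses both $M(1,1)$ and $P(1,1)$ as $\Q(t,w)$-linear combinations of $1$, $R(1)$, $R_{1}$ and $S(1)$, hence places them in $\Q(t,w,\T)$ of degree $48$. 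The identity $A(1,1)=P(1,1)+2M(1,1)=4M(1,1)-2M(0,1)$ yields, after exact cancellation of the $M(0,1)=S(1)/t$ contributions,
\[
A(1,1)=\frac{4/3-6R(1)/t-2R_{1}/t}{1-8t}\in\Q(t,w),
\]
so that $A(1,1)$ drops to degree $24$. Substituting the closed forms of $R(1)$ and $R_{1}$ and absorbing the factor $1-8t$ using the minimal equation~\eqref{algv} of $v$ produces the announced expression for $A(1,1)+1/(3t)$.

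The main obstacle is the algebraic verification underlying the $R(1)\in\Q(t,w)$ claim, namely that when the rational expression for $\Delta(1)\,\Sh(1)^{2}$ is written in terms of $v$ and $w$ it is a square of the form $\Delta(1)\,w^{2}\tilde{\T}^{2}$ with $\tilde{\T}^{2}\in\Q(t,v)$. This forces $\T$ to be quadratic (not cubic or sextic) over $\Q(t,v)$ and is responsible for the drop from the a~priori degree $72$ to the actual degree $24$ of $R(1)$. Once this structural identity is checked, all remaining simplifications are straightforward polynomial algebra, carried out in the accompanying \Maple\ worksheet and cross-validated against the series coefficients computed via the recurrence in Section~\ref{sec:guess}.
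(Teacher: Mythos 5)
Your overall strategy matches the paper's: specialize the equations at $x=1$ and $x=y=1$, note that the multipliers $x-\bx$ and $1+\by^2-2\bx\by$ vanish there, and reduce to the already-identified degree-$24$ univariate series. Your linear relation from~\eqref{eqMMM-king} at $x=y=1$, the identity $P(1,1)=2(M(1,1)-M(0,1))$, the cancellation of $S(1)$ in $A(1,1)=4M(1,1)-2M(0,1)$, and the treatment of $A_{0,0}=P_{0,0}=2R_1/t^2$ are all correct and agree with the paper. Using~\eqref{eq:2MsSt} at $x=1$ to see that $\Sh(1)$ is quadratic is also fine — the paper itself notes that this quadraticity ``can also be seen from~\eqref{eq:2Ms}.''

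One step would not work as written: you propose to obtain the closed forms of $R(1)$ and $\T$ ``from the parametrization of Proposition~\ref{prop:RhatShat} at $x=1$.'' But $\St(x)=\Sh(x)(x+1+\bx)/(x-\bx)$, so $\St(1)$ is not finite (the factor $x-\bx$ vanishes while $\Sh(1)$ does not), and the formula for $\St(x)+1/3$ in terms of $\Pun(x)$ cannot simply be evaluated at $x=1$ — the denominator $D(\Pun)D(r_1/\Pun)$ must degenerate to $0$ there, giving a $0\cdot\infty$ situation rather than the closed form. This misstep is not fatal: your own quadratic $\Delta(1)\Sh(1)^2=\Pol(\ldots)/(9t^2)$, after substituting the explicit $R_0,R_1,B_1,B_2$ from Proposition~\ref{prop:explicit-univariate} and reducing modulo the minimal polynomials of $v$ and $w$, already yields $\Sh(1)^2\in w^2\Q(t,v)$ and hence the closed form for $S(1)$ — which is exactly what the paper does (it specializes~\eqref{eq:Pol1} at $x=1$ and reduces). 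You should simply drop the appeal to $\Pun(1)$ and rely on the quadratic you already wrote. A second, very minor point: $P_{0,0}=2M_{1,0}$ is not obtained by extracting the $y^0$ coefficient of~\eqref{PM-king} directly (which gives $0=0$), but by first extracting $x^1$ to get $P(0,y)$ and then $y^0$; the paper does it this way in Section~\ref{sec:red_quadrant:king}.
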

\begin{proof}
  We begin with the series $S(1)$: we set $x=1$ in the cubic equation~\eqref{eq:Pol1} satisfied by $S(x)$, and observe that the equation factors. The factor that vanishes is quadratic in $S(1)$. (The fact that $S(1)$ is quadratic can also be seen from~\eqref{eq:2Ms}.) Then we replace $R_0, R_1, B_1, B_2$ by their expressions from Proposition~\ref{prop:explicit-univariate}. We then reduce the degree of $t$ and $w$ in this equation by taking remainders (in $t$ and $w$) modulo~\eqref{algv} and~\eqref{w-def}. The coefficient of $w$ in this equation has a factor $(1+2S(1))$, which suggests to write~\eqref{S-T}. Now $\T$ is quadratic over $\Q(t,v)$, but is found not to belong to $\Q(t,w)$. Its minimal equation over $\Q(t,v)$ can be written as~\eqref{T-def}.

  Now in order to determine $R(1)$, we set $x=1$ in the square of~\eqref{eq:3Ms2}, and perform similar reductions as for $S(1)$. For $M(1,1)$, we use the defining equation of $M(x,y)$ (see~\eqref{eqMMM-king}), of course at $x=y=1$, and obtain
  \[
    (1-8t) M(1,1)=\frac 1 3-\frac {R_1+3R(1)}{2t} +(1-8t) \frac{S(1)}{2t},
  \]
  from which the properties stated in the proposition easily follow. We then combine the above expression of $M(1,1)$ with~\eqref{Psol-king} to obtain
  \[
     (1-8t) P(1,1)=\frac 2 3-\frac {R_1+3R(1)}{t} -(1-8t) \frac{S(1)}{t}.
   \]
   Since $A(x,y)$ is given by~\eqref{eq:Asep}, we then find
  \[
     (1-8t) A(1,1)=\frac 4 3-2\frac {R_1+3R(1)}{t}.
   \]
We observe that the series $S(1)$ is not involved in this expression, and therefore $A(1,1)$ has degree $24$ only. Finally, we obtain from~\eqref{Psol-king} that $A_{0,0}=P_{0,0}= 2 M_{1,0}= 2 R_1/t^2$, which thus also has degree $24$.  
 \end{proof}

Let us now prove Proposition~\ref{cor:coeffs}, which deals with walks ending at a specific point.
 
 \begin{proof}[Proof of Proposition~\ref{cor:coeffs}.]
   According to~\eqref{sol-king} and~\eqref{Psol-king}, it suffices to prove that all series $M_{i,j}$ belong to $\Q(t,w)$.

   Let us first prove this when $i=0$ or $j=0$, that is, for the coefficients of the series $S(x)=txM(0,x)$ and $R(x)=t^2M(x,0)$. For $S(x)$, we write $S(x)=xT(x)$, and observe that the cubic equation~\eqref{eq:Pol1} satisfied by $S(x)$, with coefficients in $\qs(t,x,R_0, R_1, B_1, B_2)$, reads
   \[
     3 t (R_0^2+R_0 t+t^2) (T(x)-R_0/t)= x\,  \widetilde \Pol(t,x,T(x),R_0, R_1, B_1, B_2),
   \]
   for some polynomial $\widetilde \Pol$.   This implies that  $T_0=S_1=R_0/t$, as we already know from the definitions of $R(x)$ and $S(x)$, and then, by induction on $i$, that the series $S_i$ belong to $\Q(t,w)$ (because the series $R_0, R_1, B_1, B_2$ do). It then follows that the coefficients of $R(x)$ also belong to this field, using~\eqref{eq:Rhat},~\eqref{Rh-expr}, and~\eqref{ratio:delta}.

   We finally return to the equation~\eqref{eqMMM-king} that defines $M(x,y)$. It reads
   $K(x,y)M(x,y)= F(t,x,y)$, where $F(t,x,y)$ is a Laurent series in $x$ and $y$, having coefficients in $\Q(t,w)$ as we have just proved. We extract the coefficient of $x^i y^j$ in this equation, for $i,j\ge 0$, and thus obtain a linear expression $tM_{i+1, j+1}$ in terms of series $M_{k, \ell}$, where  $k\le i+1$ and $\ell \le j+1$, one equality being strict, and series of $\Q(t,w)$. We then conclude by induction on $i+j$.

   The fact that $C_{i,j}$ is transcendental (except for $i=-1$ or $j=-1$), follows from the fact that $Q_{i,j}$ is transcendental for $i,j\ge 0$, because its coefficients grow like $8^n n^{-3}$, which contradicts algebraicity.
 \end{proof}
 
 We finally prove the asymptotic results of Corollary~\ref{coro:asy}.

 \begin{proof}[Proof of Corollary~\ref{coro:asy}.] We apply here the principles of the \emm singularity analysis, of algebraic series~\cite[Sec.~VII.7]{flajolet-sedgewick}. The series $u$ defined by~\eqref{u-def} is found to have radius of convergence $1/8$, and a unique singularity of minimal modulus, at $t=1/8$.
   Moreover, as $t$ approaches $1/8$ from below, $u$ has the following Puiseux expansion:
   \[
     u=     \frac 1 3-\frac 2 9\,{6}^{1/3}(1-8t)^{1/3}+\frac 1 {27}{6}^{2/3}(1-8t)^{2/3}
   +\frac 1{27}(1-8t) + \LandauO\left((1-8t)^{4/3}\right).
\]
Then the series $v$ defined in~\eqref{v-def}, seen as a series in $u$, has a radius of convergence larger than $u_c:=1/3$, and is thus analytic at $u_c$. At this point it attains the value $v_c\approx 0.455\ldots$,
which is the only real root of $4v^3+3v^2-1$. As $t$ approaches $1/8$ from below, one finds
\[
v=   v_c-\frac 1 3\,v_c \left( 1+2\,v_c \right) {6}^{1/3}(1-8t)^{1/3}+{\frac { \left( 8\,{v_c}^{2}+11\,v_c+2 \right) }{54}}(1-8t) + \LandauO\left((1-8t)^{4/3}\right).
\]
Finally, the series $w$, seen as a series in $v$, is analytic at $v_c$, where it is equal to $w_c:=\sqrt{3v_c^2+12v_c+3}/2$. As $t$ approaches $1/8$ from below, one finds
\[
w= w_c-\frac 2 9{6}^{2/3}  \,v_c\,w_c\left( 1+2\,v_c
 \right)(1-8t) ^{2/3}+ \LandauO\left((1-8t)\right).
\]
More terms of the singular expansions of these three series are available in our \Maple\ session.  We plug these expansions in the expressions of $A(1,1)$ and $A_{0,0}$ given in the previous proposition and obtain
\beq\label{A11sing}
  A(1,1)= -  \frac{2^5 6^{1/3} w_c\left(28 v_c^2+61 v_c-86\right)}{3^3 101 (1-8t)^{2/3}} + cst+ \LandauO\left((1-8t)^{1/3}\right),
\eeq
\begin{multline*}
   A_{0,0}=cst
 -{\frac {2^9\,{6}^{2/3} { w_c}\, \left( 6716\,{{ v_c}}^{2}+2165\,{
          v_c}-1582 \right)  \left(1 -8\,t \right) ^{2/3}}{3^4 101^2}}
 %
 + cst \left(1 -8\,t \right) 
\\+\frac {2^8\,{6}^{1/3} { w_c}\, \left( 344660\,{{ v_c}}^{2}+688535\,{ v_c }-718546 \right) \left(1 -8\,t \right) ^{4/3}}{
3^5 101^3}
+\LandauO\left(  \left(1 -8\,t \right) ^{5/3}\right),
\end{multline*}
where each symbol $cst$ stands for a real constant that may vary from place to place, but has no implication on the asymptotic behaviour of the coefficients of our series. The series we are really interested in are
\[
  C(1,1)= A(1,1)-\frac 1 3 Q(1,1)
\]
and
\[
  C_{0,0}=A_{0,0} +\frac 1 3 Q_{0,0}.
\]
Recall from~\cite[Thm.~VI.1]{flajolet-sedgewick} that for $\alpha \not \in \{-1, -2, \ldots\}$, it holds that
\[
  [t^n] (1-8t)^{-\alpha-1} = \frac {8^n n^\alpha} {\Gamma(\alpha+1)} + \LandauO (8^nn^{\alpha-1} ).
\]
In particular, the $n$th coefficient in $A(1,1)$ grows like $8^n n^{-1/3}$, while the estimate corresponding to the remainder is in $8^n n^{-4/3}$. Moreover, it is proved in~\cite{Bostan-etal-2017-qp,MelczerMishna2016} that
\[
  [ t^n] Q(1,1)= \frac 8{3\pi} \frac{8^n}{n} + \LandauO\left(\frac{8^n}{n^2}\right),
\]
so that $Q(1,1)$ contributes to the second order term in the asymptotic behaviour of the number $c(n)$ of $n$-step walks in $\Cc$. We then  compute the minimal polynomial over $\Q$ of the constant occurring in the first term of~\eqref{A11sing}, and put the two contributions together to obtain the first part of the corollary.

Now consider the series $C_{0,0}$. Since the coefficient of $t^n $ in $Q_{0,0}$ grows like $8^n/n^3$ (see~\cite{Bostan-etal-2017-qp,DenisovWachtel15}), the first two terms in the expansion of $c_{0,0}(n)$ come from the above expansion of $A_{0,0}$, and this yields the second part of the corollary.
 \end{proof}

 \section{Combinatorial proofs of some identities on square lattice walks}
 \label{sec:combi}

\newcommand{\Walkse}[3]{#1^{#2}_{#3}}
\newcommand{\Qse}[2]{\Walkse{Q}{#1}{#2}}
\newcommand{\Cse}[2]{\Walkse{C}{#1}{#2}}
\newcommand{\Ase}[2]{\Walkse{A}{#1}{#2}}
\newcommand{\Hse}[2]{\Walkse{H}{#1}{#2}}

As already observed in~\cite[Sec.~7.1]{Bousquet2016} for the simple and diagonal models, the first two equations of Theorem~\ref{theo:king}, combined with the $x/y$-symmetry of our step set, imply that for $i,j \ge 0$,
   \[
     C_{i,j}=Q_{i,j}+C_{-i-2, j}+ C_{i,-j-2}.
   \]
As suggested in\cite{Bousquet2016}, this can be proved using the reflection principle. This is what we do in this section. Further, we establish identities of this type for more general starting points and endpoints, and all Weyl models of Table~\ref{tab:weyl}. We begin in Section~\ref{sec:combi4} with the four models having a group of order $4$, and develop in Section~\ref{sec:combi-general} a general setting.

\subsection{A group of order \texorpdfstring{$4$}{4}: simple, diagonal, king, and diabolo walks}
\label{sec:combi4}

As shown in Table~\ref{tab:weyl} there are four step sets associated with
the Weyl group  $A_1\times A_1$, of order $4$.
Mimicking the action of this group on $\rs^2$, we decompose the three-quarter plane $\Cc$ into three disjoint parts: 
\begin{align*}
	\Qc &= \{ (i,j) : i \geq 0 \text{ and } j  \geq 0 \}   &&\text{(the first quadrant)},  \\
	\Lc &= \{ (i,j) : i \leq -1 \text{ and } j  \geq 0 \}  &&\text{(the left quadrant)},   \\
	\Bc &= \{ (i,j) : i \geq 0 \text{ and } j  \leq -1 \}  &&\text{(the bottom quadrant)}.
\end{align*}
As before, let $C_{i,j}$ (resp.\ $Q_{i,j}$) be the number of walks confined to $\Cc$ (resp.\ $\Qc$) ending at $(i,j)$. 
More generally, for any starting point $(a,b)$ we write $\Qse{a,b}{i,j}$ (resp.~$C^{a,b}_{i,j}$) for the length \gf\ of walks confined to $\Qc$ (resp.~$\Cc$), starting from $(a,b)$ and ending at $(i,j)$.
A step set~$\cS$ is called \emph{vertically symmetric} (or \emm v-symmetric,) if for all $(i,j) \in \cS$ one has $(-i,j) \in \cS$; 
it is called \emph{horizontally symmetric} (or \emm h-symmetric,) if for all $(i,j) \in \cS$ one has $(i,-j) \in \cS$.
The four models that we consider in this subsection are the only v- and h-symmetric models among all small step models.

\begin{figure}[htb]
 \centering
 \includegraphics[height=4.2cm]{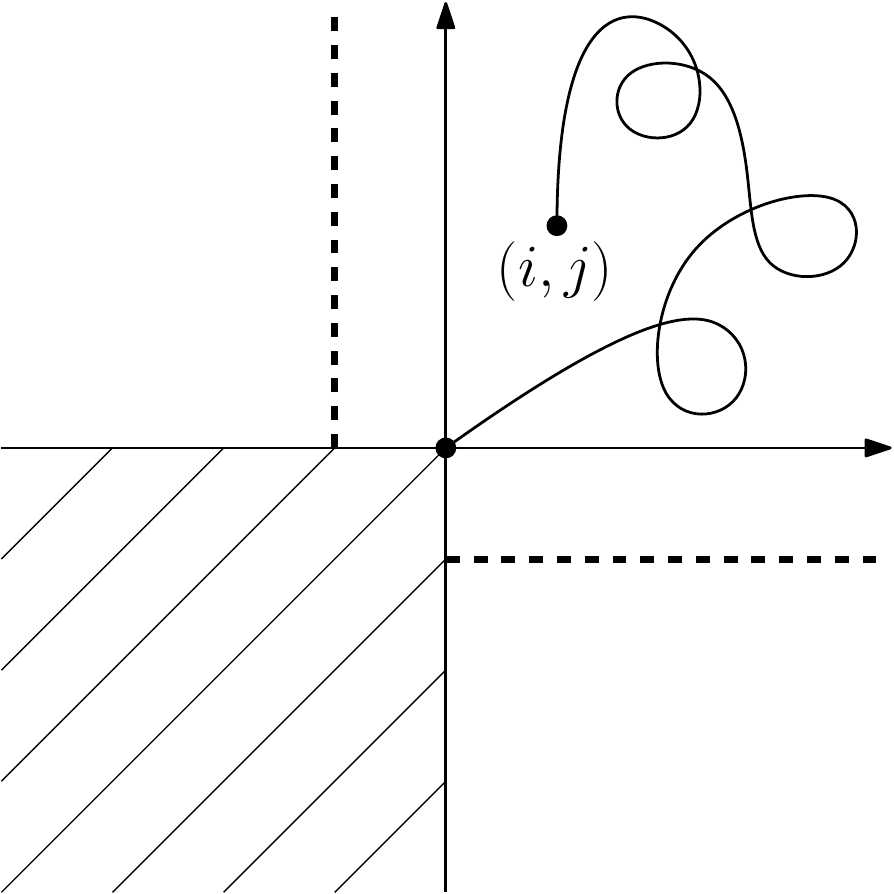}
	\qquad
 \includegraphics[height=4.2cm]{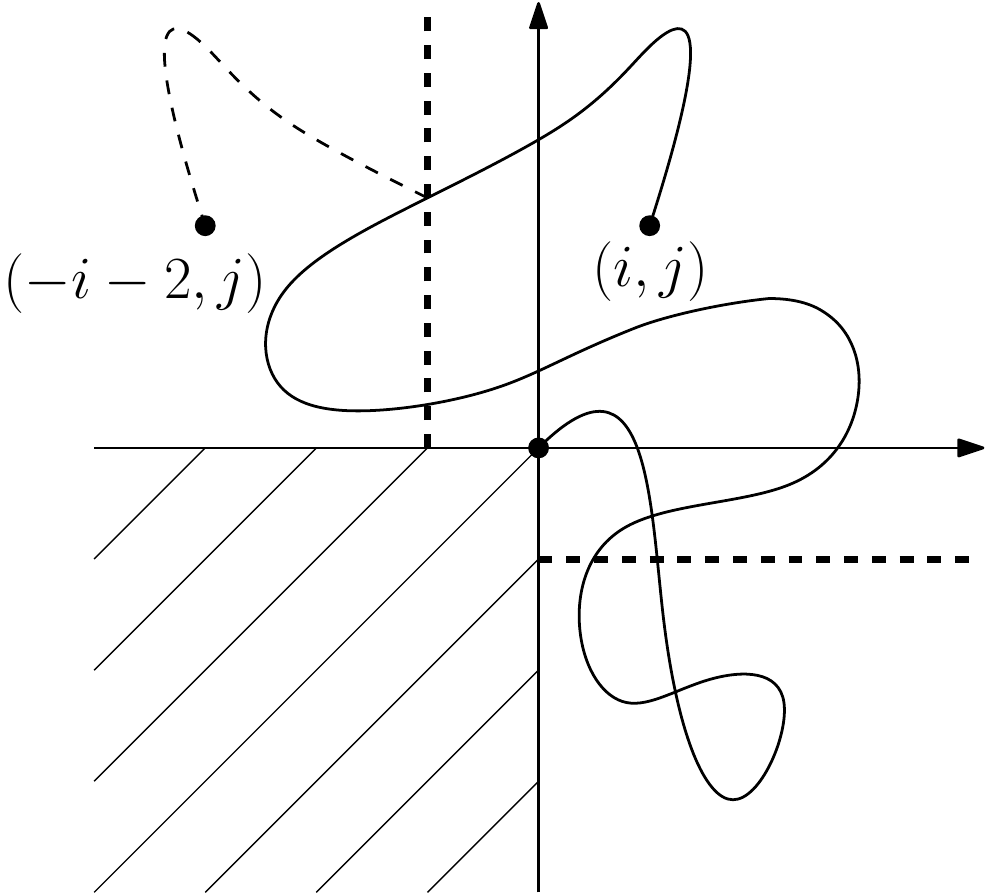}
	\qquad
 \includegraphics[height=4.2cm]{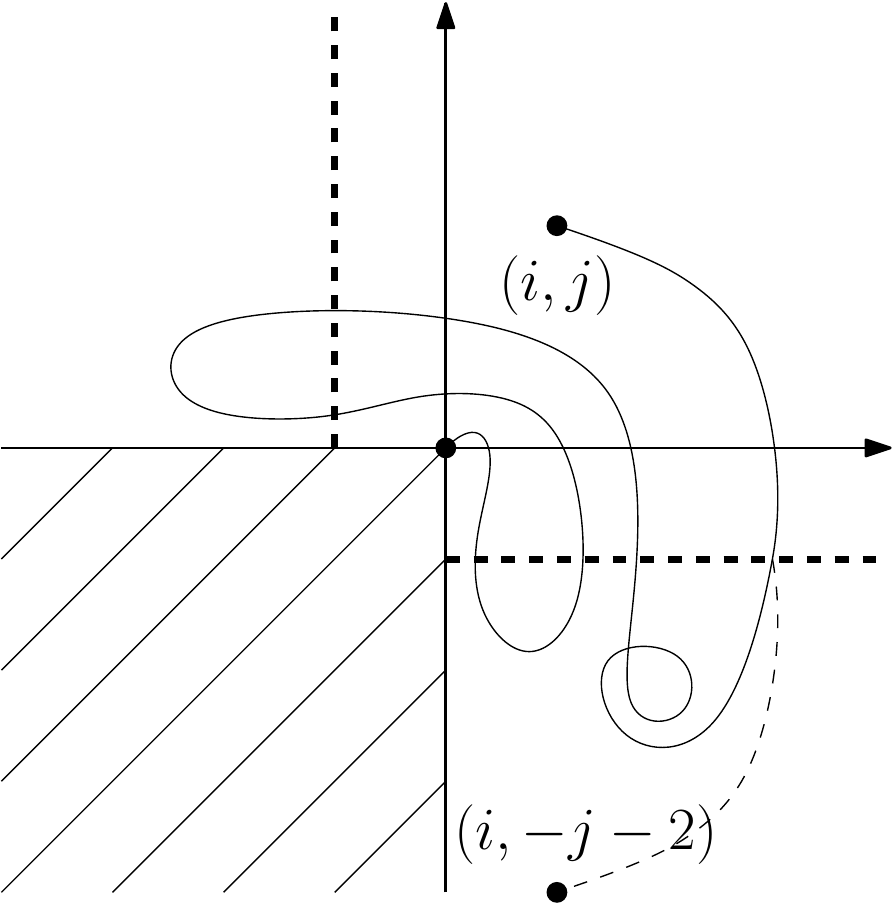}
 \caption{By the reflection principle, walks in the three-quarter plane $\Cc$ from $(0,0)$ to $(i,j)$ with $i,j \geq 0$ are in bijection with the union of three sets of walks: walks in $\Cc$ ending at $(-i-2,j)$, walks in $\Cc$ ending at $(i,-j-2)$, and walks staying completely in the first quadrant~$\Qc$, ending at $(i,j)$. For more such identities see Proposition~\ref{prop:bijection}.}
 \label{fig:bijection}
\end{figure}

\begin{prop}
	\label{prop:bijection}
	Let $\cS$ be one of the four v- and h-symmetric small step models, and let $(a,b)$ be a starting point in $\Cc$.
          For $(i,j) \in \Qc$  we have 
       \begin{align*}
	\Cse{a,b}{i,j} &= \Cse{a,b}{-i-2,j} + \Cse{a,b}{i,-j-2} +
		\begin{cases}
			 \Qse{a,b}{i,j} & \text{ if } a,b \geq 0, \\
			0  & \text{ if } a = -1 \text{ or } b=-1, \\
			 - \Qse{-a-2,b}{i,j} & \text{ if } a < -1, \\
		 - \Qse{a,-b-2}{i,j} & \text{ if } b < -1.
		\end{cases}
	\end{align*}	
	Furthermore, there exists an explicit bijection proving  each of these identities.
  \end{prop}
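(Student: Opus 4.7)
The plan is to prove each identity by a sign-reversing involution in the spirit of the reflection principle. Viewing the identity as a signed equation, I would consider the set $\mathcal{W}$ made of walks in $\Cc$ from $(a,b)$ to $(i,j)$ (with sign $+$) together with walks in $\Cc$ from $(a,b)$ to $(-i-2,j)$ or $(i,-j-2)$ (each with sign $-$), so that the signed cardinality of $\mathcal{W}$ equals $\Cse{a,b}{i,j} - \Cse{a,b}{-i-2,j} - \Cse{a,b}{i,-j-2}$. The goal is to design an involution $\iota : \mathcal{W} \to \mathcal{W}$ that pairs up walks of opposite sign, so that only its fixed points survive and match the four ``extra'' terms of the statement.

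To set up $\iota$, I would introduce $L := (\{x=-1\} \cup \{y=-1\}) \cap \Cc$, which is a disjoint union of two half-lines since $(-1,-1) \notin \Cc$, and note that none of the three endpoints $(i,j)$, $(-i-2,j)$, $(i,-j-2)$ lies on $L$. For $W \in \mathcal{W}$, let $\tau(W)$ be the latest time at which $W$ visits $L$ (if any). The map $\iota$ reflects the suffix of $W$ from $\tau(W)$ onwards across either the line $x=-1$ or the line $y=-1$, depending on which half-line contains $W(\tau(W))$. By the v-symmetry (resp.\ h-symmetry) of $\cS$, the reflected steps remain in $\cS$, and the operation swaps the endpoint $(i,j) \leftrightarrow (-i-2,j)$ (resp.\ $(i,j) \leftrightarrow (i,-j-2)$), which reverses the sign.

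The main technical obstacle will be to check that $\iota(W)$ still lies in $\Cc$. Focus on the case $\tau(W) \in \{x=-1\}$, so that $W(\tau(W)) = (-1,q)$ with $q \geq 0$ and the suffix avoids $L$ thereafter. The key small-step observation is that a walk cannot descend from $\{y \geq 0\}$ to $\{y \leq -1\}$ without passing through $\{y=-1\}$; hence the suffix lies entirely in $(\Qc \cup \Lc) \setminus \{x=-1\}$, and the reflection $x \mapsto -x-2$ maps this region bijectively onto itself inside $\Cc$. The forbidden edges $(-1,0) \leftrightarrow (0,-1)$ cannot appear in the reflected suffix because its vertices, with the possible exception of the one at time $\tau(W)$, avoid $L$; and the edge leaving $\tau(W)$ is ruled out for the same reason. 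The case $\tau(W) \in \{y=-1\}$ is handled symmetrically using h-symmetry. Once this is established, $\iota^2 = \mathrm{id}$ is immediate, because reflection preserves membership in and avoidance of $L$, so $\tau$ is unchanged.

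Finally, I would analyse the fixed points of $\iota$: these are the walks in $\mathcal{W}$ that never meet $L$. Since $\cS$ has small steps, $\Cc \setminus L$ splits into three connected components: $\Qc$, the strict left region $\{x \leq -2, y \geq 0\}$, and the strict bottom region $\{x \geq 0, y \leq -2\}$; any fixed walk stays in the component containing $(a,b)$. This yields the four cases of the statement: if $(a,b) \in \Qc$, the fixed walks stay in $\Qc$ and must end at $(i,j)$, contributing $+\Qse{a,b}{i,j}$; if $(a,b)$ lies in the strict left (resp.\ bottom) region, the fixed walks end at $(-i-2,j)$ (resp.\ $(i,-j-2)$) and biject, via the reflection $x \mapsto -x-2$ (resp.\ $y \mapsto -y-2$), with walks in $\Qc$ from $(-a-2,b)$ (resp.\ $(a,-b-2)$) to $(i,j)$, contributing $-\Qse{-a-2,b}{i,j}$ (resp.\ $-\Qse{a,-b-2}{i,j}$); and if $a=-1$ or $b=-1$ the starting point already lies on $L$ while the three endpoints do not, so there are no fixed points and the contribution is $0$.
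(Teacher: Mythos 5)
Your proof is correct and uses essentially the same mechanism as the paper's: reflect the suffix of the walk after its last visit to one of the half-lines $x=-1$ or $y=-1$. Your framing as a sign-reversing involution on a signed union of three walk sets is a slightly cleaner packaging — it automatically unifies the four cases of the starting point and makes the inverse map, which the paper handles with a separate remark ("a walk from $(a,b)$ to $(-i-2,j)$ will necessarily touch $x=-1$, after any visit to $y=-1$"), an immediate consequence of the involution property — but the underlying construction and the reason it works (small steps cannot cross a wall without landing on it, $(-1,-1)\notin\Cc$ keeps the two walls disjoint, reflection fixes the wall and preserves $\Cc$) are the same.
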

\begin{proof}
	The proof idea is to suitably reflect the walks along the lines $x=-1$ and $y=-1$ which directly results in bijections for the claimed identities.
        We  fix  an endpoint $(i,j) \in \Qc$.
        
	First, for a starting point $(a,b) \in \Qc$ we partition the walks confined to $\Cc$ into three classes as shown in Figure~\ref{fig:bijection}: a walk either always stays in the first quadrant and is therefore counted by $\Qc_{i,j} ^{a,b}$,
        or it leaves the first quadrant. 
	In the latter case it either touches the line $x=-1$ or $y=-1$.
	We cut the walk at the last point $(k,\ell)$ where this happens and reflect the second part of the walk,  going from $(k,\ell)$ to $(i,j)$, along this line. As $\Sc$         is v- and h-symmetric we get a walk in $\Cc$ with steps in $\Sc$  ending either at $(-i-2,j)$ or $(i,-j-2)$. 
        The reverse bijection is analogous. One key point here is that a walk from $(a,b)$ to $(-i-2,j)$ (say) will necessarily  touch the line $x=-1$,
         and will touch it after any visit to the line $y=-1$.

         Second, if the starting point $(a,b)$ is on the line $x=-1$ or $y=-1$ then the same argument applies, with $Q_{i,j}^{a,b}=0$ because  no path can be entirely in the first quadrant $\Qc$.

Third, if $a<-1$,  the path starts left of the line $x=-1$, and thus cannot be contained in the first quadrant either. Moreover, a difficulty arises when defining the reverse construction: a walk starting from $(a,b)$ and ending at $(-i-2,j)$ may not touch the line $x=-1$, and thus cannot be reflected along this line (there is no such problem with walks ending at $(i,-j-2)$). But these walks are in essence walks in a quadrant: reflecting them  along the line $x=-1$ gives walks   from $(-a-2,b)$ to $(i,j)$ confined to the first quadrant $\Qc$.
	
	Fourth, for $b<-1$ the reasoning is analogous.
\end{proof}

The above proposition implies in particular the three formulas given in \cite[Sec.~7.1]{Bousquet2016}:
for $i,j \geq 0$ we have for any v- and h-symmetric step set and the three starting points $(0,0)$, $(-1,0)$, and $(-2,0)$:
\begin{align*}	
	\Cse{0,0}{i,j} &= \Cse{0,0}{-i-2,j} + \Cse{0,0}{i,-j-2} + Q_{i,j}^{{0,0}}, \\
	\Cse{-1,0}{i,j} &= \Cse{-1,0}{-i-2,j} + \Cse{-1,0}{i,-j-2}, \\
	\Cse{-2,0}{i,j} &= \Cse{-2,0}{-i-2,j} + \Cse{-2,0}{i,-j-2} -Q_{i,j}^{{0,0}}. 
\end{align*}

Let us reformulate Proposition~\ref{prop:bijection} in terms of trivariate (rather than univariate) \gfs. For $(a,b) \in \Cc$,  let $C^{a,b}(x,y)$ denote the \gf \ of walks in $\Cc$ that start from $(a,b)$: 
\beq\label{Cab-def}  
    C^{a,b}(x,y)= \sum_{(i,j)\in \Cc} \Cse{a,b}{i,j} x^i y^j.
\eeq 
  We also define (uniquely) series $P^{a,b}(x,y)$, $L^{a,b}(x,y)$, and $B^{a,b}(x,y)$ in $\qs[x,y][[t]]$ by
  \beq\label{Cab-split}
    C^{a,b}(x,y)= P^{a,b}(x,y)+ \bx L^{a,b}(\bx,y)+ \by B^{a,b}(x,\by).
  \eeq
  Then  Proposition~\ref{prop:bijection} can be reformulated as follows.
  \begin{prop} \label{prop:bijection-3v}
    Let $\Sc$ be one of the four v- and h-symmetric small step models. For $(a,b)\in~\Cc$,  the above defined series are related by
    \begin{align*}
      P^{a,b}(x,y) &= \bx \left( L^{a,b}(x,y)-L^{a,b}(0,y)\right) +\by \left( B^{a,b}(x,y)-B^{a,b}(x,0)\right) \\
      &+
      \begin{cases}
        Q^{a,b}(x,y) & \text{if } a,b \ge 0, \\
        0   &\text{if } a=-1 \text{ or } b=-1, \\
        - Q^{-a-2,b}(x,y) & \text{if } a<-1, \\
        - Q^{a,-b-2}(x,y) & \text{if }b<-1. 
      \end{cases}
    \end{align*}
      \end{prop}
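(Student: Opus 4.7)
The plan is to obtain Proposition~\ref{prop:bijection-3v} as a direct generating-function reformulation of Proposition~\ref{prop:bijection}; no new combinatorial argument is needed. First I would unpack the splitting~\eqref{Cab-split}. Writing $C^{a,b}(x,y)=\sum_{(k,\ell)\in\Cc} C^{a,b}_{k,\ell}\, x^k y^\ell$ and partitioning $\Cc$ into $\Qc$, $\Lc$, and $\Bc$, the uniqueness of the decomposition~\eqref{Cab-split} forces
\[
  P^{a,b}(x,y)=\sum_{i,j\ge 0} C^{a,b}_{i,j}\, x^i y^j,\qquad L^{a,b}_{i,j}=C^{a,b}_{-i-1,j},\qquad B^{a,b}_{i,j}=C^{a,b}_{i,-j-1},
\]
for all $i,j\ge 0$, obtained by the substitutions $k=-i-1$ in the left-quadrant piece and $\ell=-j-1$ in the bottom-quadrant piece.

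Next I would apply Proposition~\ref{prop:bijection} coefficient by coefficient. For every $(i,j)\in\Qc$, the coefficient $P^{a,b}_{i,j}=C^{a,b}_{i,j}$ equals $C^{a,b}_{-i-2,j}+C^{a,b}_{i,-j-2}$ plus the relevant $Q$-correction dictated by the position of $(a,b)$. Since $-i-2=-(i+1)-1$ and $-j-2=-(j+1)-1$, the identifications above turn the two $C$-terms into $L^{a,b}_{i+1,j}+B^{a,b}_{i,j+1}$.

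Finally I would multiply by $x^iy^j$ and sum over $i,j\ge 0$. The index shifts produce
\[
  \sum_{i,j\ge 0} L^{a,b}_{i+1,j}\, x^iy^j = \bx\bigl(L^{a,b}(x,y)-L^{a,b}(0,y)\bigr),
\]
and an analogous identity in the $B$-variables, reproducing the announced right-hand side. The $Q$-contribution is immediate case by case, since each generating function $Q^{a',b'}(x,y)$ is by definition supported on $\Qc$ and collects exactly the coefficients $Q^{a',b'}_{i,j}$ that appear in the four cases of Proposition~\ref{prop:bijection}.

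I do not expect any substantial obstacle: the entire combinatorial content is already packed into Proposition~\ref{prop:bijection}, and the present statement is purely a matter of bookkeeping on formal power series. The only subtle point is to verify that the correction terms $L^{a,b}(0,y)$ and $B^{a,b}(x,0)$ genuinely correspond to the shift-by-one artefacts, which is transparent from the coefficient identifications written above.
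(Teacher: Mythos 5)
Your proof is correct and takes exactly the same route as the paper's (one-line) proof: multiply the identity of Proposition~\ref{prop:bijection} by $x^iy^j$ and sum over $i,j\ge 0$. You merely spell out the coefficient identifications $P^{a,b}_{i,j}=C^{a,b}_{i,j}$, $L^{a,b}_{i,j}=C^{a,b}_{-i-1,j}$, $B^{a,b}_{i,j}=C^{a,b}_{i,-j-1}$ and the index shifts, which the paper leaves implicit.
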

      \begin{proof}
    We multiply the identities of Proposition~\ref{prop:bijection}  by $x^i y^j$ and sum over all $i,j\ge 0$.        
      \end{proof}

Now we will use these results to generalize Equation~\eqref{Psol-king} to the four models under consideration.
First, we define the generating function $A(x,y)$ as in
Proposition~\ref{prop:A-def-gen}, or equivalently by~\eqref{AC-def-king}.
It satisfies the following functional equation:
\beq\label{eq:diaboloAfunceq}
 K(x,y)A(x,y)= \frac{2+\bx^2+\by^2}3 
-t\by H_-(x) A_{-,0}(\bx) -t\bx V_-(y)A_{0,-}(\by) -t\bx\by A_{0,0} \mathbbm 1_{(-1,-1)\in \cS}.
\eeq
Hence $A(x,y) = \sum_{(i,j) \in \Cc} A_{i,j} x^i y^j$ can be interpreted as  the \gf\ of walks starting from $(0,0)$, $(-2,0)$, or $(0,-2)$ with weights $2/3$, $1/3$, and $1/3$, respectively.
In particular, if we now define the series $P(x,y), L(x,y), B(x,y) \in \qs[x,y][[t]]$  by
\beq\label{A-split-diab}
 A(x,y)= P(x,y) + \bx L(\bx,y) + \by B(x, \by),
\eeq
(observe that $B(x, \by)= L(\by,x)$ for an $x/y$-symmetric model), we have
\beq\label{linear}
        P(x,y) = \frac 1 3 \left(2  P^{0,0}(x,y) + P^{-2,0}(x,y) + P^{0,-2}(x,y)\right),
\eeq
and analogously for the series $L$ and $B$.
    Then Proposition~\ref{prop:bijection-3v} implies the following generalization of Equation~\eqref{Psol-king}.

\begin{coro}
\label{coro:PLBgroup4}
In the case of simple, diagonal, king, or diabolo walks, the power series $P(x,y)$, $L(x,y)$, and $B(x,y)$ defined in~\eqref{A-split-diab} obey the following identity
\[
	 P(x,y)=\bx \big( L(x,y)-L(0,y)\big) +\by \big( B(x,y)-B(x,0) \big).
\]
\end{coro}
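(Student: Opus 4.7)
The plan is to combine the bijective identities of Proposition~\ref{prop:bijection-3v} with the linearity of $A(x,y)$ as a weighted sum of single-starting-point generating functions, and then to observe that the residual $Q$-terms cancel exactly.

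First, reading off the inhomogeneous term $(2+\bx^2+\by^2)/3$ in the functional equation~\eqref{eq:diaboloAfunceq}, the series $A(x,y)$ admits the combinatorial interpretation
\[
A(x,y) = \tfrac{2}{3}\, C^{0,0}(x,y) + \tfrac{1}{3}\, C^{-2,0}(x,y) + \tfrac{1}{3}\, C^{0,-2}(x,y).
\]
Splitting each of the three series on the right via~\eqref{Cab-split} and matching with the splitting~\eqref{A-split-diab}, uniqueness of such a decomposition yields~\eqref{linear} together with the analogous formulas
\[
L(x,y) = \tfrac{2}{3} L^{0,0}(x,y) + \tfrac{1}{3} L^{-2,0}(x,y) + \tfrac{1}{3} L^{0,-2}(x,y),
\]
and similarly for $B$.

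Second, I apply Proposition~\ref{prop:bijection-3v} at each of the three starting points. For $(a,b)=(0,0)$ we are in the first case and the residual $Q$-term is $+Q^{0,0}(x,y)$. For $(a,b)=(-2,0)$ we have $a<-1$, so the residual term is $-Q^{-a-2,b}(x,y) = -Q^{0,0}(x,y)$. For $(a,b)=(0,-2)$ we have $b<-1$, so the residual term is $-Q^{a,-b-2}(x,y) = -Q^{0,0}(x,y)$. Weighting these three identities by $2/3$, $1/3$, $1/3$ and summing, the $Q^{0,0}$ contributions add to $\tfrac{2}{3} - \tfrac{1}{3} - \tfrac{1}{3} = 0$, while the remaining terms assemble into $\bx(L(x,y)-L(0,y)) + \by(B(x,y)-B(x,0))$ thanks to the linearity formulas from the first step. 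This yields the claimed identity.

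There is no real obstacle: the combinatorial content is already packaged in Proposition~\ref{prop:bijection-3v}, and the corollary merely records that the weights $(2/3,1/3,1/3)$ built into $A$ via its functional equation are precisely those that make the three copies of $Q^{0,0}$ cancel. In the same spirit, the generalization to all seven Weyl models in the next subsection should follow from analogous reflection bijections for each starting point appearing in the expansion of $A(x,y)$ dictated by Proposition~\ref{prop:A-def-gen}, with the coefficients $1/(2d-1)$ producing the required cancellation of all quadrant-walk terms.
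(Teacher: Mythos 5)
Your proof is correct and is essentially the paper's own argument, only written out in full: the paper's one-line proof simply cites Proposition~\ref{prop:bijection-3v} at the three starting points $(0,0)$, $(-2,0)$, $(0,-2)$ and invokes~\eqref{linear}, which is exactly the weighted sum and $Q$-cancellation you carry out explicitly.
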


\begin{proof} 
  Applying Proposition~\ref{prop:bijection-3v} for $(a,b) = (0,0)$, $(-2,0)$, and $(0,-2)$ to~\eqref{linear} makes all contributions of the series $Q$ vanish and shows the claim.
\end{proof}

\begin{remark}
Let us  define  the  series $A(x,y)$ as in~\eqref{eq:diaboloAfunceq}, but with weights $w_0$,
  $w_x$, and $w_y$ for walks starting from $(0,0)$, $(-2,0)$, and
  $(0,-2)$ respectively (rather than $2/3$, $1/3$, $1/3$). This only changes the initial term
  in~\eqref{eq:diaboloAfunceq},  and Corollary~\ref{coro:PLBgroup4} still holds, provided that  $w_0=w_x+w_y$.
\end{remark}

\medskip
In the next  subsection, we give a higher level explanation of what happens here, more in the spirit of Gessel's and Zeilberger's proof of the reflection principle in~\cite{gessel-zeilberger}, and thus obtain  statements that are valid for all Weyl models.

\subsection{A general result for Weyl models}
\label{sec:combi-general}

\newcommand\sdot[1]{\smash{\overset{\mathbin{\vcenter{\hbox{\raisebox{-2mm}{\hspace{0.2mm}\huge.}}}}}{#1}}}

We now consider one of the seven Weyl models $\cS$ of Table~\ref{tab:weyl}, with a group $G$ of order $2d$, $d\in\{2, 3, 4\}$. Recall the definition of this group from Section~\ref{sec:group}, and the definition of the length $\ell(g)$ and sign $\varepsilon_g$ of $g \in G$. This group acts on steps, seen as elements of the vector space~$\zs^2$: for $g\in G$, the corresponding element $\vec g$ sends $(i,j)$ to $(k,\ell)$ if $g(x^i y^j)=x^k y^\ell$ (recall that we have defined $g(F(x,y)):=F(g(x,y))$ for any rational function $F(x,y)$). By construction of $G$, the set of steps $\cS$ is invariant under this action of $G$. The group $G$ also acts on points of the plane, that is, on the \emm affine space, $\zs^2$, by $\sdot g (a,b)=(c,d)$ where $x^cy^d=\bx\by \, g(x^{a+1}y^{b+1})$ (the shift by $xy$ is a bit unfortunate, and would be avoided by considering the positive quadrant $\{(a,b): a >0, b>0\}$ rather than the non-negative quadrant $\Qc$).

For $g \in G$, we denote $\Qc_g= \sdot g (\Qc)$. The $2d$ domains $\Qc_g$, for $g \in G$, are disjoint; see Figure~\ref{fig:reflection}. For $(a,b)\in \Qc$, the orbit of $(a,b)$ under the affine action of $G$ consists of $2d$ distinct points of the plane. In particular, the orbit sum $\OS(x^{a+1}y^{b+1})= \sum_g \vareps_g g(x^{a+1}y^{b+1})$ is non-zero. The union of the $2d$ domains $\Qc_g$ does not cover the whole plane. For the  points $(a,b)$  that are not in this union, the orbit of $(a,b)$ under the affine action of $G$ has cardinality less that $2d$, and in fact $\OS(x^{a+1}y^{b+1})= 0$. The complement of $\cup_g \Qc_g$ is the union of $d$ lines (also called \emm walls, to match  the terminology of~\cite{gessel-zeilberger}), defined,  for each  $g\in G$ such that $\vareps_g=-1$ (i.e., $\ell(g)$ is odd), by  $W_g=\{(a,b)\in \zs^2 : \sdot g(a,b)=(a,b)\}$.  The lines are  dashed  in our figures, and  correspond to the reflection axes once the steps are straightened (as in Table~\ref{tab:weyl}). For instance, in all cases we have $W_\phi=\{(a,b):a=-1\}$ and $W_\psi=\{(a,b):b=-1\}$.  Any two of the lines $W_g$ intersect at the point $(-1,-1)$. An important property is that a walk that is not entirely contained in a domain $\Qc_g$ must touch one of these lines.

\begin{figure}[htb]
 \centering
 \includegraphics[width=0.35\textwidth]{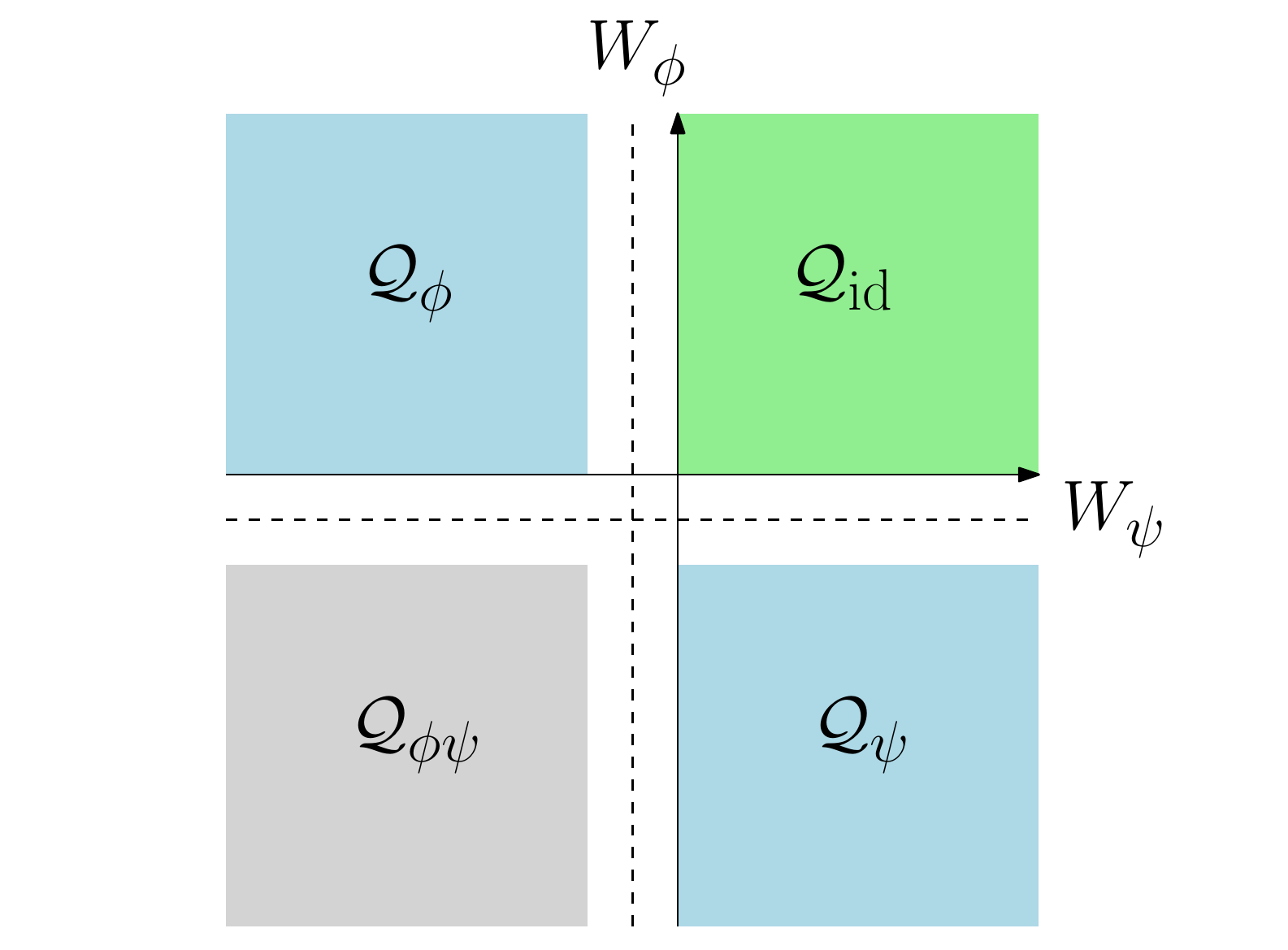}
\hskip -6mm \includegraphics[width=0.35\textwidth]{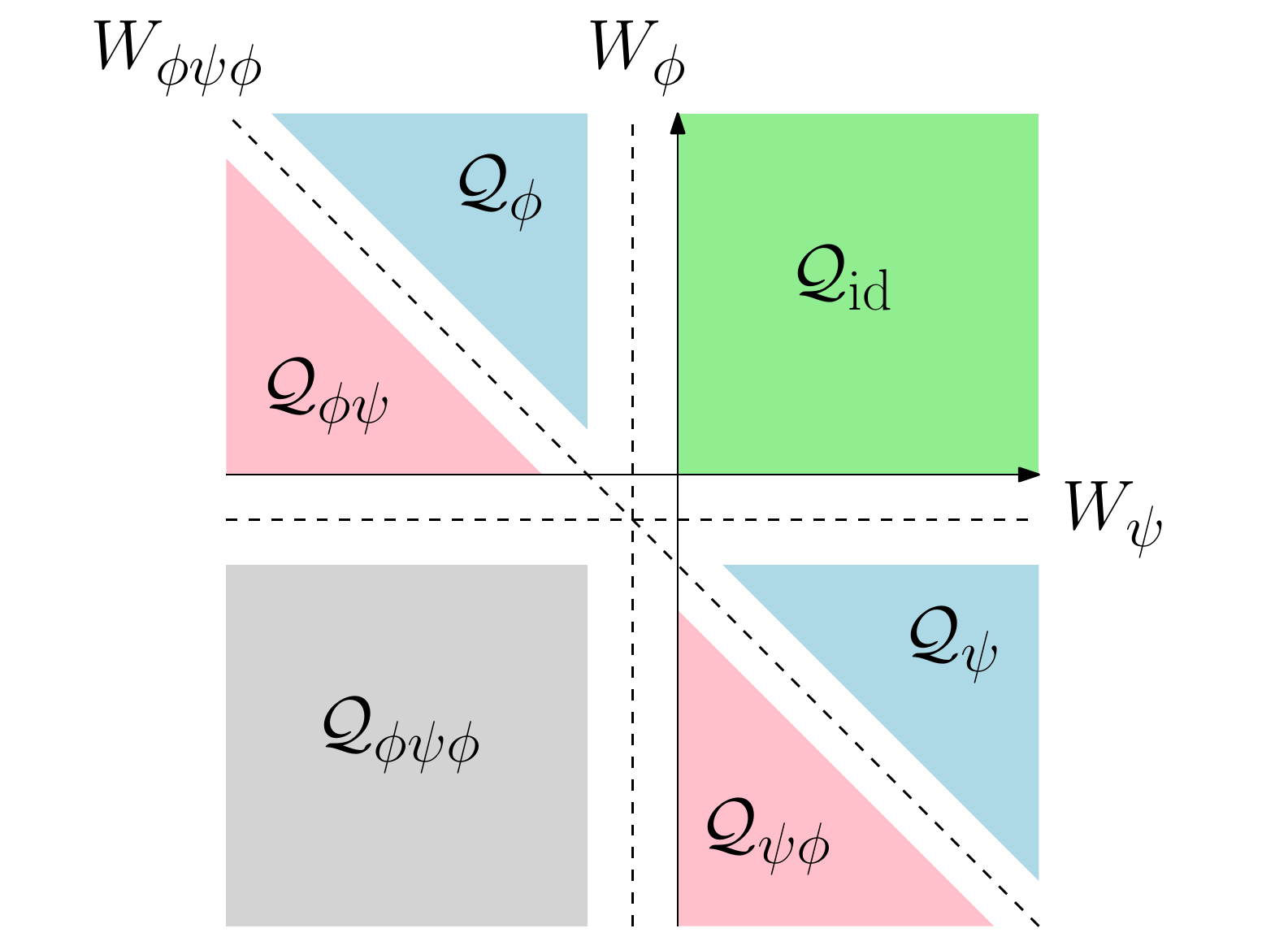}
\hskip -6mm  \includegraphics[width=0.355\textwidth]{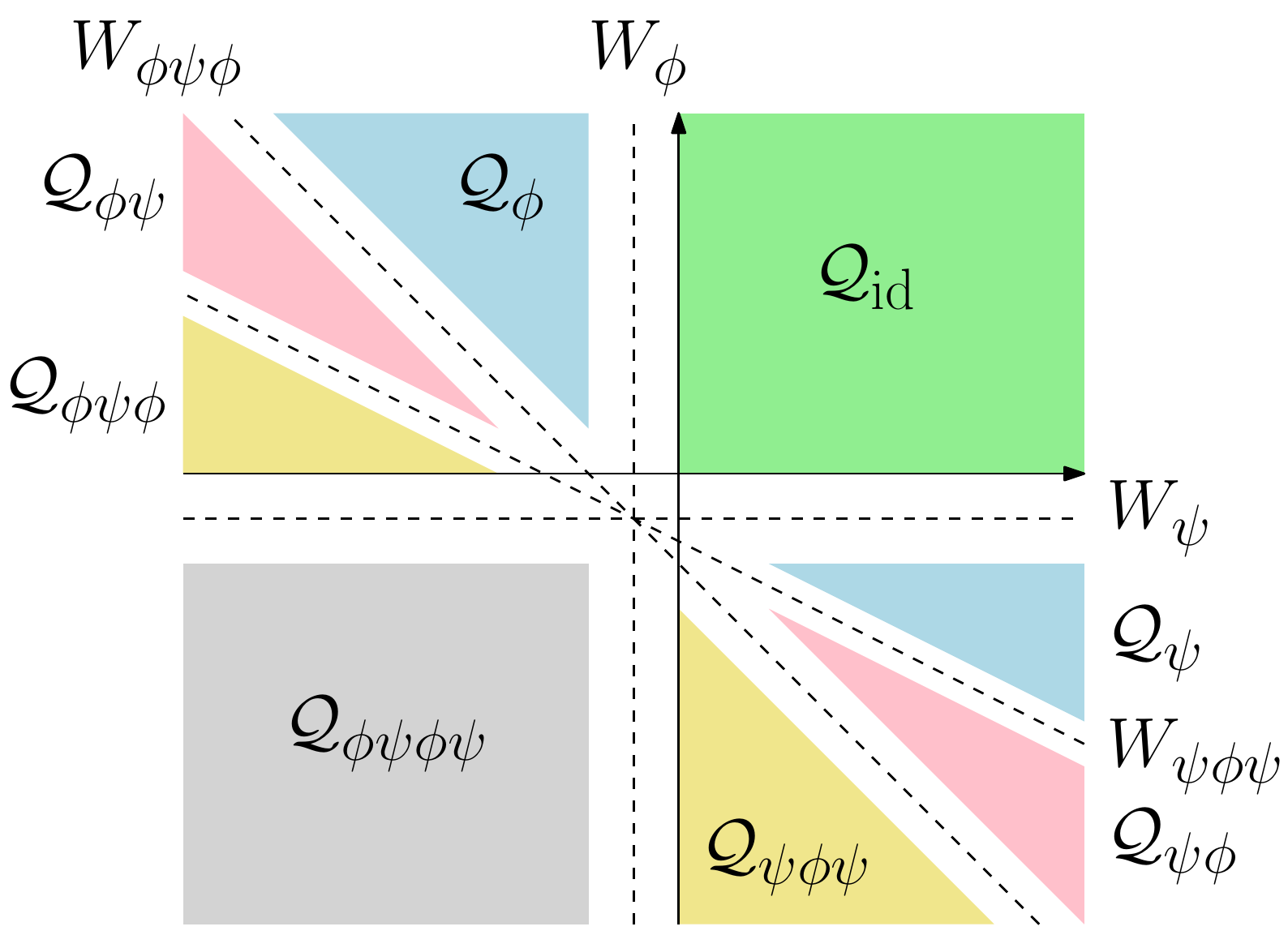}
 \caption{The $2d$ domains $\Qc_g$ for $g \in G$, where $G$ has order  $2d=4,6, 8$. They are separated by the $d$ walls $W_g$, for $g\in G$ such that $\vareps_g=-1$.}
 \label{fig:reflection}
\end{figure}

We adopt the same notation $C^{a,b}_{i,j}$ and $C^{a,b}(x,y)$ as in the previous subsection. The generalization of Proposition~\ref{prop:bijection} reads as follows.

\begin{prop}\label{prop:bij3v-general}
  Let $\Sc$ be one of the Weyl models of Table~\ref{tab:weyl}. Let $2d$ be the order of the associated group $G$. Let $\omega=\phi\psi\psi\cdots$ (with $d$ generators) be the only element of length $d$ in~$G$. For any starting point $(a,b) \in \Cc$ and any endpoint $(i,j)\in \Qc$, we have:
  \[
    \sum_{g\in G\setminus\{\omega\}} \vareps_g\,  C^{a,b}_{\sdot g(i,j)} =
    \begin{cases}
      0 & \text{if } (a,b) \not \in \bigcup_g \Qc_g, \\
      \vareps_h \, Q_{i,j} ^{\sdot h(a,b)} & \text{if } \sdot h(a,b) \in \Qc \text{ for } h \in G.
    \end{cases}
    \]
  \end{prop}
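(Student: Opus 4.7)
The plan is to prove this statement by constructing a sign-reversing involution on a suitable set of weighted walks, in the spirit of Gessel--Zeilberger's reflection principle~\cite{gessel-zeilberger} and generalizing the bijective argument underlying Proposition~\ref{prop:bijection}. Let $\Ec$ denote the multiset of pairs $(g, w)$, where $g \in G \setminus \{\omega\}$ and $w$ is a walk in $\Cc$ from $(a, b)$ to $\sdot g(i, j)$, each carrying the weight $\varepsilon_g t^{|w|}$; the left-hand side of the claim is the total weight of $\Ec$.

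I would define an involution $\iota$ on $\Ec$ as follows. For a pair $(g, w)$, let $T$ be the largest time at which $w$ lies on one of the walls $W_h$ (for $h \in G$ with $\varepsilon_h = -1$); since any two distinct walls meet only at the common fixed point $(-1, -1) \notin \Cc$, the wall $W_h$ and the position $(k, \ell) = w(T) \in W_h$ are uniquely determined by $T$. Apply the reflection $\sdot h$ to the suffix of $w$ from time $T$ onwards: since $\sdot h$ fixes $(k, \ell)$ and its linear part is an automorphism of the step set $\cS$, the result $w'$ is a genuine walk starting at $(a, b)$ and ending at $\sdot h(\sdot g(i, j)) = \sdot{(hg)}(i, j)$. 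Set $\iota(g, w) := (hg, w')$; if $w$ never visits a wall, declare $(g, w)$ a fixed point.

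Three points must then be verified. First, $\iota$ is a sign-reversing involution: the prefix of $w$ and the time $T$ are unchanged by $\iota$, and the reflected suffix, being the $\sdot h$-image of a walk that avoids all walls after time $T$, still avoids them; combined with $\varepsilon_{hg} = -\varepsilon_g$, this gives both properties. Second, the fixed-point contribution matches the right-hand side: a walk not touching any wall stays entirely in a single region $\Qc_k$, so necessarily $(a, b) \in \Qc_k$ for some $k \in G$, and the endpoint $\sdot g(i, j) \in \Qc_g$ then forces $g = k$. Applying the affine map $\sdot{k^{-1}}$ to such a walk gives a length-preserving bijection with walks in $\Qc$ from $\sdot{k^{-1}}(a, b)$ to $(i, j)$, so setting $h := k^{-1}$, the total fixed-point weight is $\varepsilon_h\, Q^{\sdot h(a, b)}_{i, j}$, as claimed. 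If $(a, b)$ lies on a wall, every walk in $\Ec$ already has a wall touch at time $0$, so no fixed points exist and the total weight is $0$.

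The main obstacle is the third point: one must check that $w'$ genuinely stays in $\Cc$, equivalently that $hg \neq \omega$. The suffix of $w$ after time $T$ stays in the single region $\Qc_g$ containing its endpoint (it never revisits a wall after $T$), so its $\sdot h$-image lies in $\Qc_{hg}$, and we need $\Qc_{hg} \neq \Qc_\omega$, i.e., $g \neq h\omega$. This reduces to the geometric observation that the ray $W_h \cap \Cc$ borders only two of the open regions $\Qc_{g'}$ within $\Cc$, and neither of them is $\Qc_\omega$ (which lies entirely in the forbidden quadrant) nor its $\sdot h$-mirror $\Qc_{h\omega}$. This can be verified case by case from the explicit descriptions of the groups and Weyl chambers in Table~\ref{tab:weyl}.
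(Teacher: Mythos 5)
Your argument is structurally the same as the paper's: a sign-reversing involution obtained by reflecting the suffix of the walk at its last wall visit, with the fixed points (walks avoiding all walls) supplying the right-hand side. The place where you diverge from the paper --- and where there are a couple of small gaps --- is the verification that the reflected walk $w'$ lies in $\Cc$.

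First, you never rule out that the single new edge from $w(T)$ to $\sdot h(w(T+1))$ could be the forbidden step between $(-1,0)$ and $(0,-1)$, which is a live concern for the five Weyl models containing the step $(1,-1)$. The paper disposes of this in one line: $(-1,0)$ and $(0,-1)$ both lie on walls, whereas $\sdot h(w(T+1))$, by the choice of $T$, does not. Second, the geometric reduction you propose is stated imprecisely: already for $d=3$ the wall $W_\omega=\{a+b=-2\}$ meets $\Cc$ in a \emph{union of two rays} (one with $a\geq 0$, one with $b\geq 0$), bordering four regions rather than a single ray bordering two, so the case check you defer would in fact have to be carried out ray by ray, and similarly for the oblique walls when $d=4$. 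The paper avoids the case analysis altogether with a short uniform observation: if $\sdot h(w(T+1))$ were in $\Qc_\omega$, it would have both coordinates $\leq -2$, forcing the adjacent point $w(T)$ to have both coordinates $\leq -1$ and hence to lie outside $\Cc$, a contradiction. This single argument covers all seven models at once and is the one point worth incorporating into your write-up.
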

  \begin{proof}
    Recall that for $(i,j)\in \Qc$, the $2d$ endpoints  $\sdot g(i,j)$ are distinct. Hence the left-hand side of the above identity counts walks in $\Cc$, starting from $(a,b)$ and ending at one of the $2d-1$ points in the (affine) orbit of $(i,j)$ that are not in the negative quadrant, with a sign that depends on the domain $\Qc_g$ where the walk ends. Observe that for each walk the parameters $(a,b)$, $(i,j)$, and $g$ are uniquely determined.
    We will define a (partial) sign-reversing involution $\iota$ on these walks. The idea is sketched in Figure~\ref{fig:tandem-reflection-walks} for a group of order $6$, that is, for tandem or double tandem walks.

    Let $w$ be such a walk. If it does not intersect any of the walls, then $\iota(w)$ is undefined. In this case, the starting point $(a,b)$ of $w$ must be in one (and exactly one) of the domains $\Qc_g$, say in $\Qc_{h^{-1}}$ (so that $\sdot h(a,b)\in \Qc$). Then the endpoint of $w$ must be in $\Qc_{h^{-1}}$ as well, and applying $\sdot h$ to the walk $w$ (seen as a sequence of vertices) sends $w$ to a walk joining $\sdot h(a,b)$ to $ (i,j)$ in $\Qc$. Hence the signed number of walks that do not intersect any wall is given by  the right-hand side of the identity.

    Now assume that $w$ intersects one of the $d$ walls, and write $w=(w_0, \ldots, w_n)$ where the $w_i$'s are points of $\Cc$. Consider the largest $m$ such that $w_m$ is on one of the walls $W_h$. Note that the group element $h$ is uniquely defined, because the walls only intersect at $(-1,-1)$, which is not in~$\Cc$. Moreover, we have $m<n$ because the final point $\sdot g(i,j)$ is not on a wall. More generally, all points $w_{m+1}, \ldots, w_n=\sdot g(i,j)$  lie in $\Qc_g$. Now, form the walk $\iota(w):=(w_0, \ldots, w_m=\sdot h (w_{m}), \sdot h (w_{m+1}),\ldots, \sdot h (w_{n}))$. Note that the points $\sdot h (w_{m+1}),\ldots, \sdot h (w_{n})$ lie in the domain $\Qc_{hg}$. The new walk has still steps in $\cS$, because $\Sc$ is invariant under the (vectorial) action of $G$.

    Let us prove that it lies in the three-quadrant cone $\Cc$. This holds obviously for the first $m$ steps. If this were not true for the rest of the walk, then either the step $(w_m, \sdot h (w_{m+1}))$ would be one of the two forbidden steps joining $(-1,0)$ to $(0,-1)$ (but this is impossible because $\sdot h (w_{m+1})$ is not on a wall), or all points $\sdot h (w_{m+1}),\ldots, \sdot h (w_{n})$ would be in the domain $\Qc_\omega$. But this is not possible either since $w_m=\sdot h (w_{m})$ would then have both coordinates negative.

    Since $\iota(w)$  ends at $\sdot h\circ \sdot g(i,j)$, its sign is $-\vareps_g$ (because $h$ has odd sign). Its last visit to a wall is clearly $w_m \in W_h$, so $\iota \circ \iota(w)=w$ and we have indeed constructed a sign reversing involution of walks that visit at least one wall. This concludes the proof.
  \end{proof}

\begin{figure}
  \makebox[\textwidth][c]{
    \includegraphics[width=0.27\textwidth]{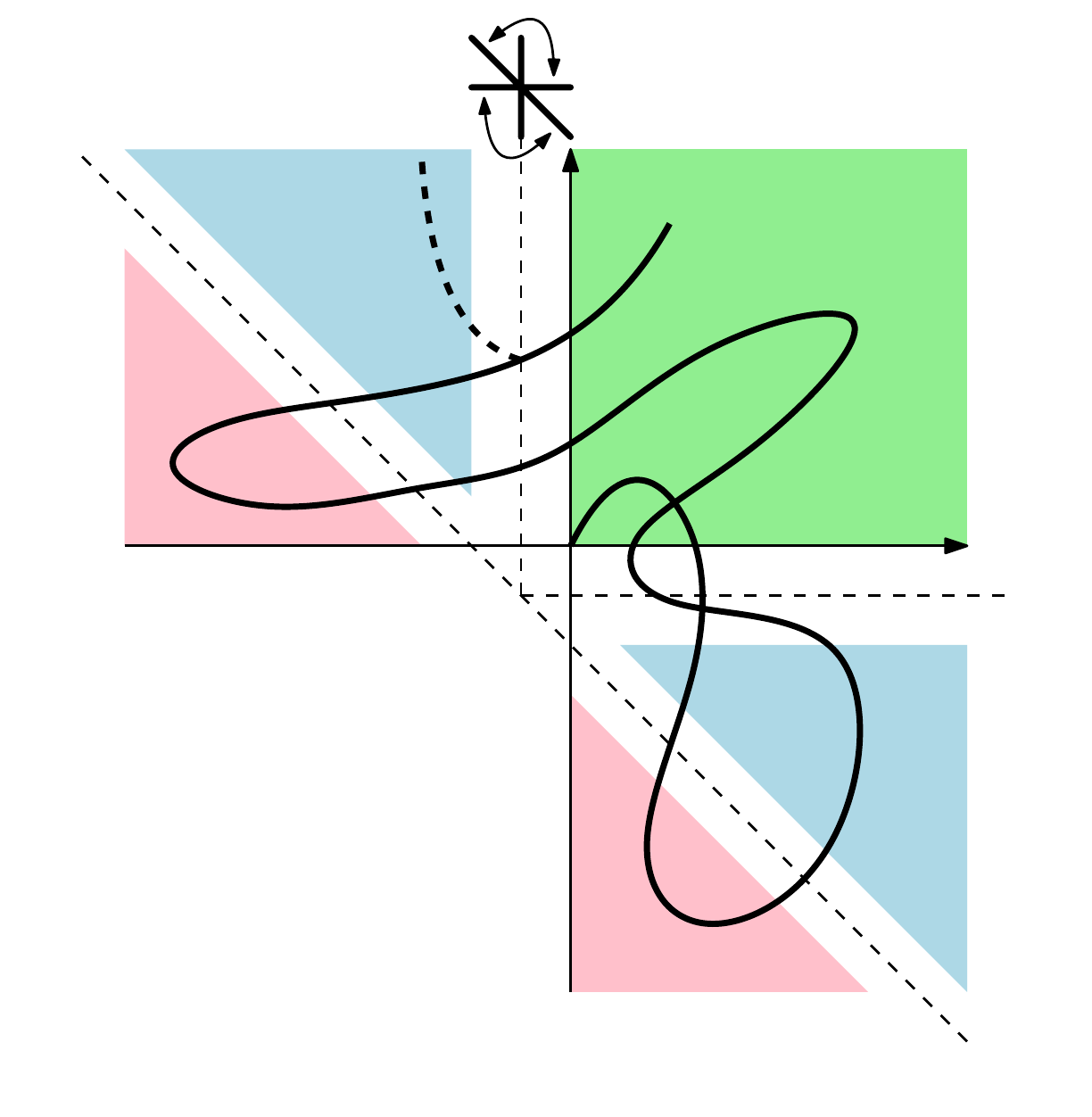}
    \hskip -1.5mm
    \includegraphics[width=0.27\textwidth]{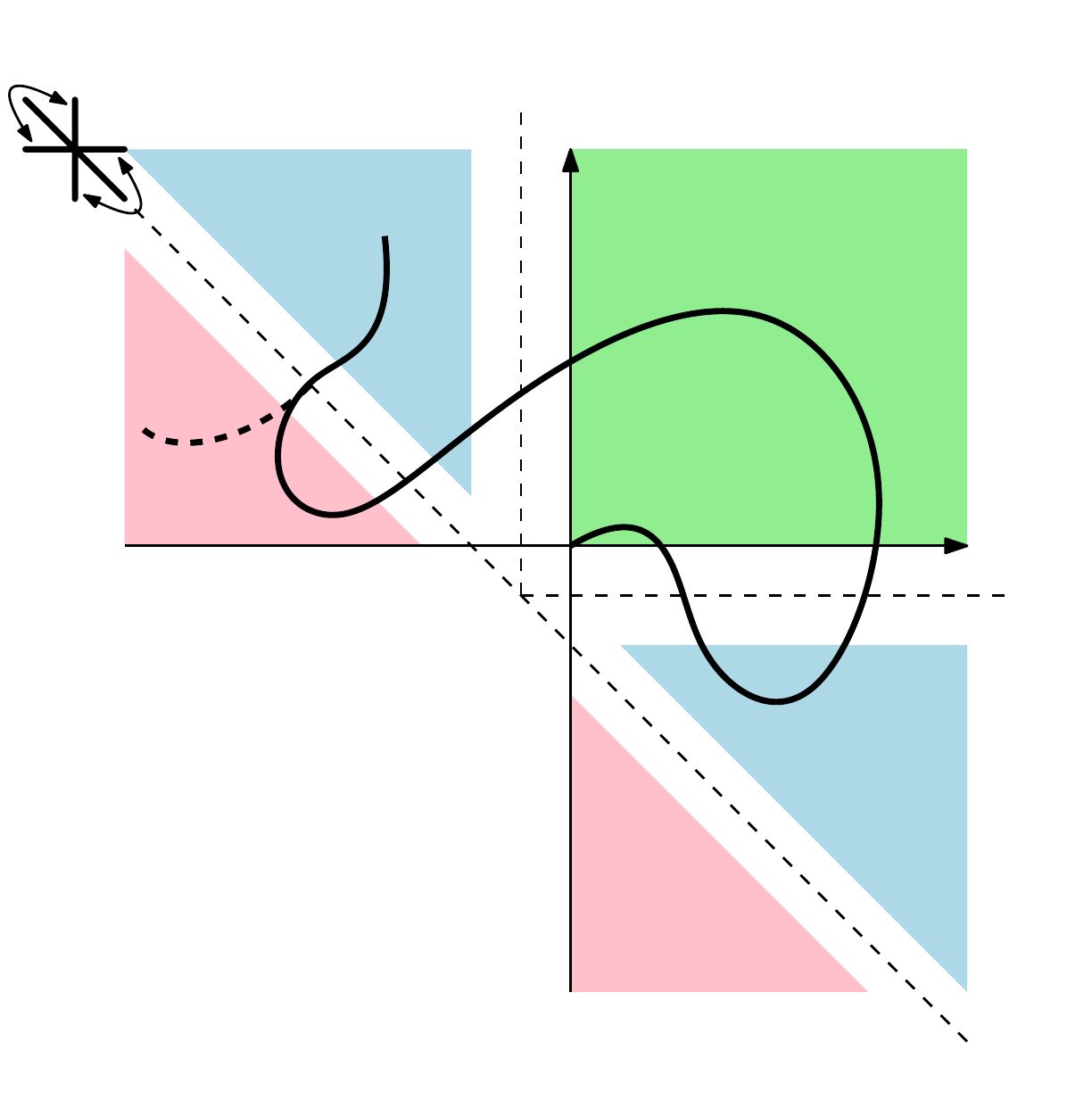}
    \hskip -1.5mm
    \includegraphics[width=0.27\textwidth]{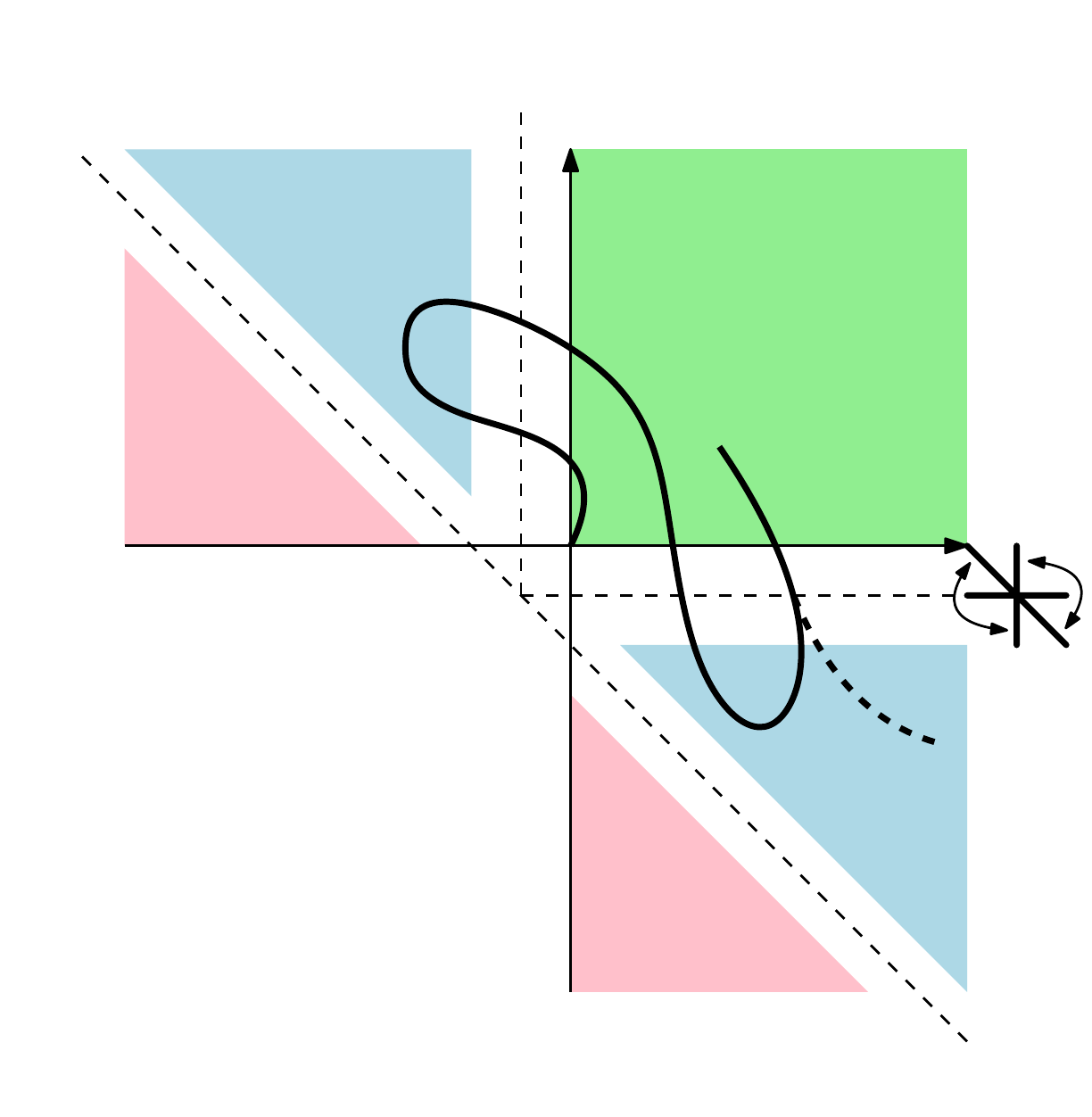}
    \hskip -1.5mm
    \includegraphics[width=0.27\textwidth]{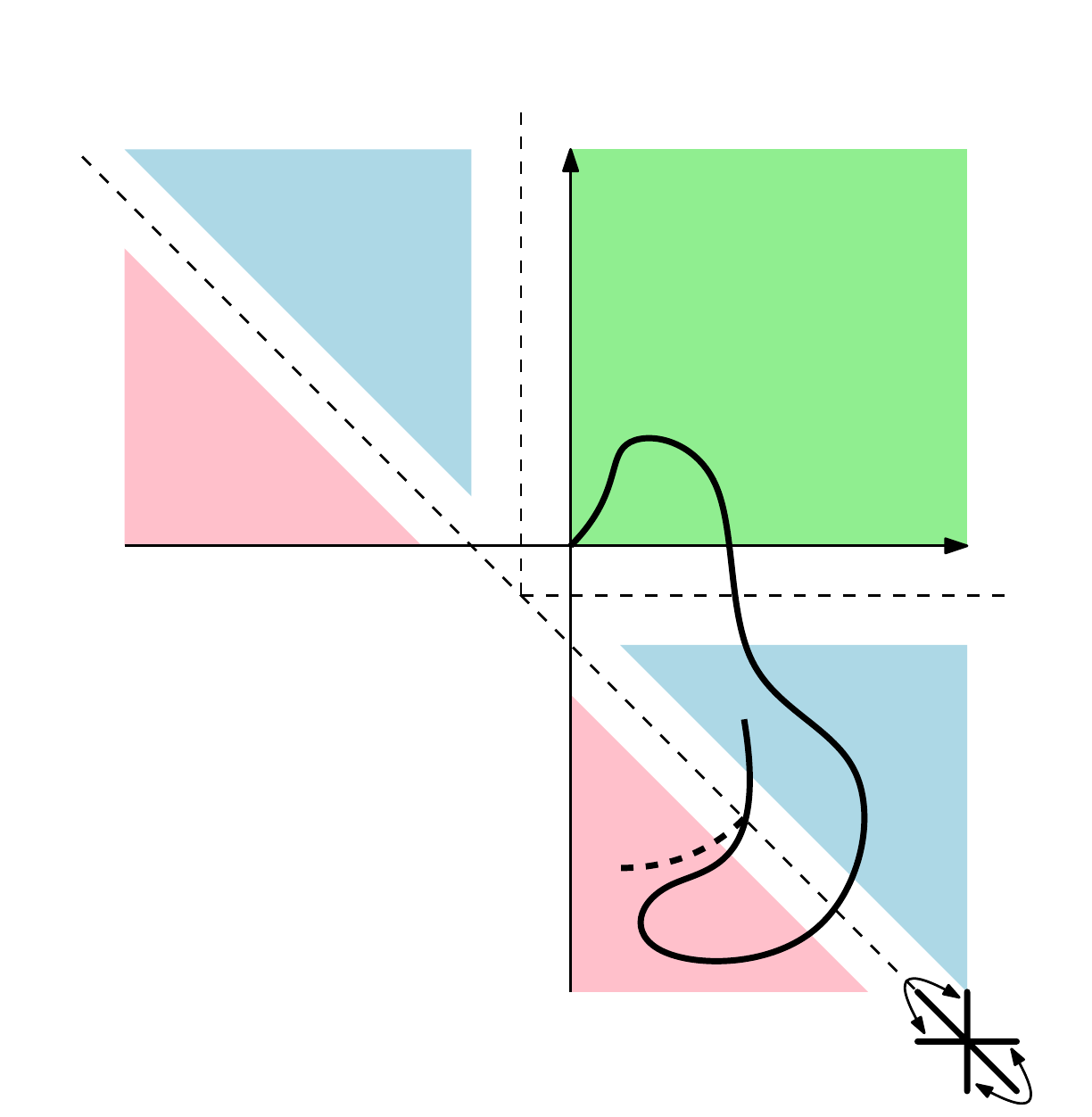}}
  \caption{The involution of Proposition~~\ref{prop:bij3v-general} for walks starting at $(0,0)$ and a group of order $6$. The above eight walks capture all possible values of the pair $(g,h)$, where $\Qc_g$ is the domain in which the walk ends and $W_h$ the last visited wall. The figure also shows the action of $h$ on the steps.}
 \label{fig:tandem-reflection-walks}
\end{figure}

\pagebreak

Let us now reformulate the above proposition in terms of trivariate \gfs, as in Proposition~\ref{prop:bijection-3v}. Given $(a,b)\in \Cc$ and $g\in G$, the \gf\ of walks in $\Cc$ starting from $(a,b)$ and ending in $\Qc_g$ reads
  \begin{align*}
    \sum_{(k, \ell) \in \Qc_g} C^{a,b}_{k, \ell} x^k y^\ell
   & =
     \sum_{(i, j) \in \Qc} C^{a,b}_{\sdot g (i,j)}\bx \by \, g\!\left(x^{i+1} y^{j+1} \right)
     \\
 & = \bx \by \, g(xy) P_g^{a,b}(g(x,y)),
  \end{align*}
  where  $P_g^{a,b}(x,y) =  \sum_{(i, j) \in \Qc} C^{a,b}_{\sdot g (i,j)} x^i y^j$ is a series in $\qs[x,y][[t]]$. For instance, when $G$ has order $4$, it follows from~\eqref{Cab-split} that
  \begin{align*}
  P_\id^{a,b}(x,y) &= P^{a,b}(x,y),\\
  P_\phi^{a,b}(x,y)&=\bx\left(L^{a,b}(x,y)- L^{a,b}(0,y)\right), \\
    P^{a,b}_\Psi(x,y)&= \by \left( B^{a,b}(x,y)-B^{a,b}(x,0)\right).
  \end{align*}
  The generalization of Proposition~\ref{prop:bijection-3v} reads as follows.
  
  \begin{prop}  \label{prop:bijection-3v-gen} Let $\Sc$ be one of the Weyl models of Table~\ref{tab:weyl}. Let $2d$ be the order of the associated group $G$. Let $\omega=\phi\psi\psi\cdots$ (with $d$ generators) be the only element of length $d$ in~$G$. For any starting point $(a,b) \in \Cc$, we have:
    \[
      \sum_{g \in G \setminus\{\omega\}} \vareps_g P_g^{a,b}(x,y)=
    \begin{cases}
      0 & \text{if } (a,b) \not \in \bigcup_g \Qc_g, \\
      \vareps _h Q^{\sdot h(a,b)}(x,y) & \text{if } \sdot h(a,b) \in \Qc \text{ for } h \in G.
    \end{cases}
    \]
  \end{prop}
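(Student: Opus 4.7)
The plan is to derive this proposition as a direct consequence of the univariate identity established in Proposition~\ref{prop:bij3v-general}. The strategy will be simply to multiply both sides of that identity by $x^i y^j$ and sum over all $(i,j) \in \Qc$, exactly in the spirit of how Proposition~\ref{prop:bijection-3v} was obtained from Proposition~\ref{prop:bijection} in the four simplest Weyl models.

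First I would invoke the defining formula
\[
  P_g^{a,b}(x,y) = \sum_{(i,j) \in \Qc} C^{a,b}_{\sdot g(i,j)}\, x^i y^j
\]
to recognize that the summed left-hand side is exactly $\sum_{g \in G \setminus \{\omega\}} \vareps_g P_g^{a,b}(x,y)$, matching the target. The restriction to $g \ne \omega$ is consistent on both sides: indeed, $(i,j) \mapsto \sdot g(i,j)$ is a bijection between $\Qc$ and $\Qc_g$, and in each of the three possible orders $2d \in \{4,6,8\}$ a direct check shows that $\sdot \omega(\Qc)$ lies entirely in the negative quadrant, hence outside $\Cc$, so no walk counted by $C^{a,b}$ can end at such a point.

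Next I would handle the right-hand side. In the first case (when $(a,b) \notin \bigcup_g \Qc_g$), each term of the sum vanishes and one obtains $0$ at once. In the second case, where $\sdot h(a,b) \in \Qc$ for some $h \in G$, summation yields
\[
  \vareps_h \sum_{(i,j) \in \Qc} Q^{\sdot h(a,b)}_{i,j}\, x^i y^j = \vareps_h Q^{\sdot h(a,b)}(x,y),
\]
which is exactly the claimed expression.

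I do not anticipate any genuine obstacle: all the combinatorial content -- namely the sign-reversing involution built from the last visit to a wall $W_h$ and the action of $\sdot h$ on the tail of the walk -- has already been packaged into Proposition~\ref{prop:bij3v-general}. The present statement is purely a reformulation of that identity in terms of trivariate generating functions, and the only point worth verifying carefully is the bookkeeping of which $\sdot g(\Qc)$ lies inside or outside $\Cc$, which is routine once the group $G$ is made explicit as in Table~\ref{tab:weyl}.
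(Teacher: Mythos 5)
Your proof is correct and follows exactly the same approach as the paper, which states the argument in a single sentence: multiply the identity of Proposition~\ref{prop:bij3v-general} by $x^i y^j$ and sum over $i,j\ge 0$. Your additional observations (the definition of $P_g^{a,b}$ and the fact that $\sdot\omega(\Qc)$ lies outside $\Cc$) are correct but not strictly needed, since the summation index $g\in G\setminus\{\omega\}$ is already built into Proposition~\ref{prop:bij3v-general}.
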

  \begin{proof}
    Multiply the identity of Proposition~\ref{prop:bij3v-general} by $x^i y^j$ and sum over $i, j \ge 0$.
  \end{proof}

  Our final result deals with the series $A(x,y)$ defined in Proposition~\ref{prop:A-def-gen}. Observe that the orbit sum of $xy$ can be written in terms of the affine orbit of $(0,0)$:
  \[
    \bx \by \OS(xy) =\sum_{\substack{h \in G \\[1mm] (c,d)=\sdot h (0,0)}}
     \vareps_h \, x^c y^d.
  \]
 Hence, from the functional equation~\eqref{A-func-eq}   satisfied by $A(x,y)$, we conclude that this series counts weighted walks in $\Cc$, and more precisely, that
  \[
    A(x,y)= \frac{2d-2}{2d-1}\,  C^{0,0}(x,y) - \frac 1 {2d-1} \sum_{h \in G\setminus\{\id, \omega\}} \vareps_h \, C^{\sdot h(0,0)}(x,y).
  \]
  For $g\in G$, let us denote by $\bx \by \, g(xy) P_g(g(x,y))$ the contribution in $A(x,y)$ of (weighted) walks ending in $\Qc_g$. As before, this notation is designed so that $P_g(x,y) \in \qs[x,y][[t]]$. Then
  \beq\label{P-Pab}
    P_g(x,y)=  \frac{2d-2}{2d-1}\,  P^{0,0}_g(x,y) - \frac 1 {2d-1} \sum_{h \in G\setminus\{\id, \omega\}} \vareps_h \, P^{\sdot h(0,0)}_g(x,y).
  \eeq
We can now state   the generalization of Corollary~\ref{coro:PLBgroup4}.
  \begin{coro}
    The above defined series $P_g(x,y)$ are related by
    \[
      \sum_{g \in G \setminus\{\omega\}} \vareps_g P_g(x,y)=0.
    \]
  \end{coro}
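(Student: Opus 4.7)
The plan is to feed the definition~\eqref{P-Pab} of $P_g$ into the desired sum, interchange the two summations, and apply the preceding Proposition~\ref{prop:bijection-3v-gen} with the appropriate starting points. Concretely, I would first write
\begin{align*}
\sum_{g \in G \setminus\{\omega\}} \vareps_g\, P_g(x,y)
&= \frac{2d-2}{2d-1} \sum_{g \in G \setminus\{\omega\}} \vareps_g\, P^{0,0}_g(x,y) \\
&\quad - \frac{1}{2d-1} \sum_{h \in G\setminus\{\id, \omega\}} \vareps_h \sum_{g \in G \setminus\{\omega\}} \vareps_g\, P^{\sdot h(0,0)}_g(x,y),
\end{align*}
so that each inner $g$-sum is exactly of the form addressed by Proposition~\ref{prop:bijection-3v-gen}.

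Next I would verify the key geometric point: for every $h\in G\setminus\{\omega\}$, the starting point $\sdot h(0,0)$ lies in the region $\Qc_h$ and, in particular, in $\bigcup_g \Qc_g$ (since $(0,0)\in \Qc=\Qc_\id$ sits strictly off any wall, its orbit avoids the walls, which all pass through $(-1,-1)$). Thus the second alternative of Proposition~\ref{prop:bijection-3v-gen} applies, with $h^{-1}$ playing the role of the group element that brings $\sdot h(0,0)$ back to $\Qc$. Using that $\ell(h^{-1})=\ell(h)$, hence $\vareps_{h^{-1}}=\vareps_h$, the inner sum evaluates to
\[
\sum_{g \in G \setminus\{\omega\}} \vareps_g\, P^{\sdot h(0,0)}_g(x,y) = \vareps_h\, Q^{0,0}(x,y),
\]
for every $h\in G\setminus\{\omega\}$ (including $h=\id$).

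The proof is then finished by a plain arithmetic cancellation: substituting back, the bracketed quantity becomes
\[
\frac{2d-2}{2d-1}\, Q^{0,0}(x,y) \;-\; \frac{1}{2d-1} \sum_{h \in G\setminus\{\id, \omega\}} \vareps_h^{\,2}\, Q^{0,0}(x,y)
= \frac{2d-2}{2d-1}\, Q^{0,0}(x,y) - \frac{2d-2}{2d-1}\, Q^{0,0}(x,y) = 0,
\]
using $\vareps_h^2=1$ and $|G\setminus\{\id,\omega\}|=2d-2$. Note that this cancellation is precisely the algebraic reason why $2d-1$ appears as a denominator in the definition~\eqref{Aexpr-1} of $A(x,y)$: the weights on $C^{0,0}$ and on the $C^{\sdot h(0,0)}$ in~\eqref{P-Pab} were tuned exactly to make the $Q^{0,0}$ contributions cancel.

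There is essentially no obstacle beyond bookkeeping. The only non-obvious ingredient is the verification that $\sdot h(0,0)\in \Cc$ (and not on any wall) for every $h\in G\setminus\{\omega\}$, which in turn legitimizes the very definition~\eqref{P-Pab} of the series $P_g$ via a linear combination of well-defined three-quadrant generating functions. This can be checked uniformly from Table~\ref{tab:weyl}: for each Weyl group one reads off the affine orbit of $(0,0)$ and observes that $\omega$ is the unique element sending $(0,0)$ to the interior of the forbidden quadrant (e.g.\ $(-2,-2)$ for $A_1\times A_1$, $(-2,-2)$ again for $A_2$, and so on), while the remaining $2d-1$ images lie in $\Cc$ and avoid the walls.
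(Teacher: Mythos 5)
Your proof is correct and follows essentially the same route as the paper's: substitute the linear combination~\eqref{P-Pab}, swap the summations, apply Proposition~\ref{prop:bijection-3v-gen} to each inner sum, and conclude by the cancellation $\frac{2d-2}{2d-1}-\frac{2d-2}{2d-1}=0$. The only difference is cosmetic (you spell out $\vareps_{h^{-1}}=\vareps_h$ and $\vareps_h^2=1$ where the paper writes $\vareps_h\vareps_{h^{-1}}=1$) and you additionally verify the geometric hypothesis that $\sdot h(0,0)\in\bigcup_g\Qc_g$ for $h\neq\omega$, which the paper leaves implicit.
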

  \begin{proof}
    We first use the identity~\eqref{P-Pab}, and then Proposition~\ref{prop:bijection-3v-gen}. This gives:
    \begin{align*}
      \sum_{g \in G \setminus\{\omega\}} \vareps_g P_g(x,y)
      &=  \frac{2d-2}{2d-1}\,   \sum_{g \in G \setminus\{\omega\}} \vareps_g P^{0,0}_g(x,y) - \frac 1 {2d-1} \sum_{h \in G\setminus\{\id, \omega\}} \vareps_h
        \sum_{g \in G \setminus\{\omega\}} \vareps_g P^{\sdot h(0,0)}_g(x,y)
      \\
      &= \frac{2d-2}{2d-1}\,  Q^{0,0}(x,y)  - \frac 1 {2d-1} \sum_{h \in G\setminus\{\id, \omega\}} \vareps_h \vareps_{h^{-1}} Q^{0,0}(x,y)
      \\
  &    =0,
    \end{align*}
    since $\vareps_h \vareps_{h^{-1}} =1$ and $G$ has order $2d$.
  \end{proof}

   \begin{remark}
   It is easy to see that the results of this section hold as well if we allow steps between the points $(0,-1)$ and $(-1,0)$, as in~\cite{Budd2020Winding,ElveyPrice2020Winding}. 
  \end{remark}

\section{Final comments}
\label{sec:final}

The first question raised by this paper is whether all seven models of Table~\ref{tab:weyl} actually obey the pattern described in Conjecture~\ref{conj:Weyl}. Does $C(x,y)$ differ from the linear combination of series $Q(\cdot, \cdot)$ given in Proposition~\ref{prop:A-def-gen} by an algebraic series? This is now proved for three of these seven models.

In terms of techniques, one can of course try to extend the approach of this paper to the other four Weyl models. Another idea would be to try to use the technique, based on \emm invariants,, that has been used recently~\cite{mbm-tq-kreweras} to solve the first three (algebraic) models of Table~\ref{tab:zero}. In fact, it is shown in~\cite{mbm-tq-kreweras} that this approach also works for the simple and diagonal models. Can it be adapted to the four unsolved Weyl cases?  
to Gessel's model (number four in Table~\ref{tab:zero})?

Next to these $7+4=11$ models, there remain $12$ models with finite and non-monomial group, as shown in Table~\ref{tab:non-monomial}.
The non-monomial group action when applied to power series, prevents the efficient extraction of (positive/negative) parts.
For this reason the methods of this paper become even more complicated, and probably new approaches have to be developed.

Another question is whether the \gf\ for walks in other cones -- possibly larger than $2\pi$, as in~\cite{Budd2020Winding,ElveyPrice2020Winding} -- may  satisfy a similar algebraicity phenomenon; that is,  decompose into a simple D-finite series with the same orbit sum and an algebraic one.

We conclude with a sketch of the solution of the king model in which we
allow moves from $(0,-1)$ to $(-1,0)$ and back, as in~\cite{Budd2020Winding,ElveyPrice2020Winding}.

\subsection*{Allowing steps between \texorpdfstring{$(-1,0)$}{(-1,0)} and \texorpdfstring{$(0,-1)$}{(0,-1)} in king walks}
As already mentioned in this paper, 
in two recent references dealing with the winding number of plane lattice walks~\cite{Budd2020Winding,ElveyPrice2020Winding}, it seems more natural to count walks in which all vertices lie in $\Cc$, but not necessarily all edges: that is, one allows  steps form $(-1,0)$ to $(0,-1)$, and vice versa. It is natural to ask whether this choice leads to simpler series. This is why we have re-run our \Maple\ sessions on this variant of the king model. The first steps of the derivation, until the determination of the series $R_0$, $R_1$, $B_1$, and $B_2$ (as in Proposition~\ref{prop:explicit-univariate}) appear to be a bit simpler, but this stops being the case as soon as we return to the series $R(x)$ and $S(x)$. Let us give a few details.

First, the only changes in our basic functional equations are a term $t(\bx+\by) C_{-1,0}$ in the right-hand side of~\eqref{eq:kernelC1}, and a term $t\bx M_{0,0}$ in the right-hand side of~\eqref{eqM-king}. The new series $\Sh(x)$ is obtained from~\eqref{eq:STcardanosubs} by deleting the term in $R_0$. We still denote $\St(x)=(x+\bx+1)\Sh(x)/(x-\bx)$, and then the equation in one catalytic variable that we have to solve reads, with the same notation as in~\eqref{eqSc}:
\begin{align*}
0 &=  27  \left( 2 t+z+1 \right)  \left( 10 t-3 z+1 \right) {\St(x)}^{3}+
  \big(  \left( 54-243 R_1+54 t \right) R_0 -243 R_1^{2}+243 R_1 t \\
& \quad -81 zR_1 -45 {t}^{2}+18 zt-27 B_1+27 B_2+81 R_1-54 t+18 z-9 \big) \St(x)\\
&\quad -81 {R_0}^{2}+ \left( 81 t+27-27 z-162 R_1 \right) R_0-81 R_1^{2}+81 R_1 t-27 zR_1\\ 
&\quad +5 {t}^{2}+10 zt-3 {z}^{2}+9 B_1+9 B_2+27 R_1-6 t+4 z-2=0.
\end{align*}
The system defining $R_0$, $R_1$, $B_1$, and $B_2$ is also a bit more compact, and in fact we can  derive polynomial equations for each individual series without having to guess them first.  They are now all of degree $24$ (while $B_1$ had degree $12$ in the first setting), and are found to belong to $\Q(t,w)$. For instance, we now have
\[
  R_0= \frac{v(1-2t) }{v^4+8v^3+6v^2+2v+1}\left( 1+2v+ \frac {2v^3-4v-1}{2w}\right).
\]

Then we get back to $S(x)$, and that is where things become after all a bit more complicated than in the first setting. For instance, $\St(x)$ has now degree $72$ rather than $36$. It can be written as $\St_0(x)+w \St_1(x)$, where both series $\St_i(x)$, now of degree $36$, belong to $\Q(v,\Pzero(x))$ and hence to $\Q(v,\Pun(x))$, where $\Pzero$ and $\Pun$ are series defined in Appendix~\ref{app:P0} and Section~\ref{sec:bivariate}, respectively. Finally, both series $R(x)$ and  $S(x)$ are found to belong to $\Q(t,w, x, \Pun(x))$ and have degree $72$ over $\qs(t,x)$.

 \section*{Acknowledgements}
We are extremely thankful to Mark van Hoeij, who helped us a lot in finding a simple description of our algebraic series of high degree. Our warm thanks also go to Bruno Salvy for  his  help with several \Maple\ problems that we met.



\appendix
\section{From large polynomial equations to simple sub-extensions}
\label{app:subextensions} 

\newcommand{\Pmab}{P}

In this section we  explain how to derive a ``simple'' expression for a series $F$, similar to those of Proposition~\ref{prop:explicit-univariate}, from a large polynomial equation satisfied by $F$, like the 
polynomial equations for the series $R_0$, $R_1$, $B_1$, and $B_2$ that we have guessed in Section~\ref{sec:guess}; see Table~\ref{tab:guessed}. More specifically, we describe how to find subextensions over $\qs(t)$ of the fields $\Q(t,R_0)$, \ldots , $\Q(t,B_2)$, and ``simple'' series in these extensions.  For this section, we have greatly benefited from the help of Mark van Hoeij (\url{https://www.math.fsu.edu/~hoeij/}). We also refer to the appendix that he wrote in~\cite{BoKa08}. The final picture is shown in Figure~\ref{fig:algtvw}.

\begin{figure}[!ht]
	\begin{tabular}{ccccccccccc}
		&& $\Q(t)$ & $\stackrel{4}{\hookrightarrow}$ & $\Q(t,\xzero)$ & $\stackrel{3}{\hookrightarrow}$ & $\Q(t,\av)$ & $\stackrel{2}{\hookrightarrow}$ & $\Q(t,\aw)$ \\
		& $\stackrel{\circled{2}}{\hookNEarrow}$ && $\stackrel{2}{\hookNEarrow}$ && $\stackrel{2}{\hookNEarrow}$ && $\stackrel{2}{\hookNEarrow}$ & \\
		$\Q(s)$ & $\stackrel{\circled{4}}{\hookrightarrow}$ & $\Q(\xzero)$ & $\stackrel{\circled{3}}{\hookrightarrow}$ & $\Q(\av)$ & $\stackrel{2}{\hookrightarrow}$ & $\Q(\av,\aw)$ &&
	\end{tabular}	
	\caption{Algebraic  structure of the fields $\Q(t,B_1)=\Q(t,v)$ and $\Q(t,R_1)=\Q(t,B_1)=\Q(t,B_2)=\Q(t,w)$. The numbers above the arrows give the degree of the extensions; circles mark rational parametrizations. We set $s=\frac{t(1+t)}{(1-8t)}$.	}
	\label{fig:algtvw}
\end{figure}

We begin with the simplest series, $\DSA$, of (conjectured) degree
$12$. We denote by
 $\Pmab(F)$ its guessed monic minimal polynomial with coefficients in $\qs(t)$.

\medskip

\subsection{Finding sub-extensions}
\label{sec:sub_extensions}
In principle, the  \texttt{Subfields} command of
  {\sc Maple} can determine all subextensions of $\qs(t,B_1$) of a prescribed degree. But we were unable to use it successfully with the variable $t$. Instead, we used it for several specific values of $t$. For instance, for $t=1$ the polynomial $\Pmab(F)$ is irreducible over $\qs$, and the command \texttt{evala(Subfields(subs(t=1,P(F)),d)}, with $d=2, 3, 4, 6$, shows the existence of a subfield of degree $4$ over $\qs$, generated by a number~$u$ that satisfies
  \[
199974741\,{u}^{4}-76156920\,{u}^{3}-34589883726\,{u}^{2}+248642276448\,u-521380624943
,
\]
but  of no subfield of degree $2$, $3$, or $6$.
By repeating this calculation with several fixed rational values of~$t$, one conjectures that the extension
$\Q(t,\DSA)$ indeed possesses a subfield $\GK=\Q(t,\xzero)$ of degree $4$ over $\Q(t)$. For each fixed $t$,
{\sc Maple} gives a generator $\xzero$, but it is not canonical. How
can we then construct~$u$ for a generic $t$?

If $\Q(t,B_1)$ has indeed a subfield $\GK$ of degree 4 over $\Q(t)$, then $\Pmab(F)$
factors over $\GK$ into the form  $P_3(F) P_9(F)$, where  $P_3$ (resp.\ $P_9$) is a  monic polynomial of degree $3$ (resp.\ $9$) with coefficients in $\GK$. This factorization should be reflected in the factorization of $P(F)$ over $\qs(t,B_1)$, which should then be of the form
\begin{align*}
	\Pmab(F) =  (F-\DSA)  Q_2(F) Q_9(F),
\end{align*}
where indices still indicate degrees. This time the polynomials $Q_2$ and $Q_9$ should have  coefficients in $\qs(t,B_1)$, and we would then have
\[
  P_3(F)=(F-B_1)Q_2(F).
\]
If we can compute this factorization using \Maple\ (see below what to do otherwise), we thus obtain an expression of the
minimal monic polynomial of $B_1$ over $\GK$, namely $P_3(F)$, as a polynomial in $F$ with explicit coefficients in $\qs(t,B_1)$.
Now, let us write
\[
  P_3(F)= F^3+p_2 F^2+p_1 F+p_0.
\]
By eliminating~$B_1$ from the expressions of the $p_i$'s (using the equation $P(B_1)=0$), we obtain the minimal monic polynomial of each $p_i$
over $\qs(t)$, say $M_i(p_i)=0$, where $M_i(p)$ has coefficients in $\qs(t)$. Since $p_i$ must belong to $\GK$, each $M_i(p)$  should be of degree at most $4$ in $p$. Conversely, for each $M_i$ of degree $4$ (if any), we can take $p_i$ as a generator of $\GK$ over~$\qs(t)$.

If the command \texttt{factor(P(F),RootOf(P(B1),B1))} fails, as happened for us, we can perform this factorization for several rational values of  $t$. The above procedure then gives  the value of the minimal monic polynomial $M_i(p)$ at this specific  value of $t$. Since the coefficients of $M_i$ are rational functions in $t$, we then reconstruct the value of this polynomial for a generic $t$ by rational interpolation. In practise, we were able to reconstruct the minimal polynomial $M_2(p)$, of degree $4$ in $p$, from its values obtained for $t=3, \ldots, 30$ (we start at $t=3$ because $P(F,t)$ is reducible for $t=2$). At this stage, we can conjecture that $\Q(t,B_1)$ has a subextension of degree $4$ generated by a root of $M_2(p)$, namely $p_2$. We denote by $u_1:=p_2$ this first generator of $\GK$. Note that we have not identified $p_2$ but just its minimal polynomial over $\qs(t)$.

\subsection{Finding ``simple'' generators}

However, the polynomial $M_2(p)$ is still too big for our taste. In particular, its numerator, denoted $N_2(p,t)$, is  a polynomial in $p$ and $t$, of degree $12$ in $t$.  Using the \texttt{algcurve} package, we find that $N_2(p,t)$ has genus $2$, and the \texttt{is$\_${hyperelliptic}} command tells us that it is hyperelliptic. This implies that the equation $N_2(p,t)=0$ can be written as $g^2=\Pol(f)$ where $f$ and $g$ are rational functions in $p$ and $t$, and conversely, $p$ and $ t$ can be expressed rationally in terms of $f$ and $g$. The command \texttt{Weierstrassform} determines such a pair $(f,g)$.

Next, we compute the minimal polynomials of  $f$ and $g$ over $\qs(t)$, in the hope that they are simpler than $M_2$. This is indeed the case, and we finally take $u_2:=g$ as a new generator of~$\GK$. The coefficients of its minimal polynomial over $\Q(t)$ are found to have several common factors. This leads us to introduce a new generator $u_3$, which only differs from $u_2$ by a factor of $\qs(t)$, and satisfies
\[
  9\,u_3^{4}-4\,{\frac { \left( 112\,{t}^{2}+120\,t-1 \right) 
 \left( 16\,{t}^{2}+72\,t-7 \right) }{ \left( 4\,t+1
 \right) ^{4}}}u_3^{3}+30\,u_3^{2}-12\,u_3+1 =0.
\]
Remarkably, this can be rewritten so that $u_3$ and $t$ are separated:
\[
  {\frac { \left( 3\,u_3^{2}+6\,u_3 -1 \right) ^{2}}{u_3^{3}}}=64\,{\frac { \left( 16\,{t}^{2}+24\,t-1 \right) ^{2}}{
 \left( 4\,t+1 \right) ^{4}}}.
\]
Hence, one of the square roots of $u_3$, denoted $u_4$, has also degree $4$ (and thus generates the field $\GK$) and satisfies
\[
  {\frac {3\,u_4^{4}+6\,u_4^{2}-1}{u_4^{3}}}={8\,
\frac {16\,{t}^{2}+24\,t-1}{ \left( 4\,t+1 \right) ^{2}}}
\]
or equivalently,
\[
\frac { \left( u_4+1 \right) ^{3} \left( 3\,u_4-1
 \right) }{16 \left( u_4-1 \right) ^{3} \left( 3\,u_4+1
 \right) }=-{\frac {t \left( 1+t \right) }{1-8\,t}}.
\]
Finally, with $u_5:=(u_4-1/3)/(u_4+1)$, we have reached
\beq\label{u5-eq}
  \frac {u_5}{(1+u_5)(1-3u_5)^3}= \frac{t(1+t)}{1-8t},
\eeq
where we recognize Equation~\eqref{u-def-alt} satisfied by the series $u$ of Section~\ref{sec:univariate}.
Note that this equation also shows the existence of a non-trivial
subfield of $\Q(t)$ and $\Q(u)$, namely $\Q(s)$ with $s : = \frac{t(1+t)}{1-8t}$, which we, however, have not used; see Figure~\ref{fig:algtvw}.

\subsection{Proving the guessed sub-extension}
At this stage, we  suspect that the field $\qs(t,B_1)$ contains a field $\GK=\qs(t,u_5)$, where $u_5$ is one of the roots of~\eqref{u5-eq}. In order to check this, and identify the correct root $u_5$, we factor the (guessed) minimal polynomial of $B_1$, denoted $P(F)$ above, using the command \texttt{factor(P(F),RootOf(Alg(u5),u5))}, where $\Alg(u)$ is the minimal polynomial of $u_5$. Actually a bug in the version of \Maple\ that we use forces us to have a monic polynomial instead of $\Alg(u)$, which is why we consider in practise $u'= u_5/(27t(1+t))$ instead of $u_5$. Then the factorization works, and tells us that $P(F)$ has indeed a factor $P_3(F)$ of degree $3$ with coefficients in $\qs(t,u_5)$. By expanding $P_3(B_1)$ around $t=0$ for each of the  roots of~\eqref{u5-eq}, we see that $u_5$ must be the root $u$ defined in Section~\ref{sec:univariate} as the only solution of~\eqref{u5-eq} that is a formal power series in $t$.

We have now proved that for the  (guessed) series $B_1$, the field $\qs(t,B_1)$ admits indeed $\qs(t,u)$ as  a subextension of degree $4$.

\subsection{Construction of the series \texorpdfstring{$v$}{v}}
We would now like to find in $\Q(t,B_1)$ a series $v$ that is also cubic above $\Q(t,u)$ (like~$B_1$), but satisfies a simpler equation, and, why not, an equation that does not involve $t$.
To investigate this, we now look at $B_1$ as an algebraic element over $\Q(u)$.
We construct its minimal monic polynomial $\tilde P(F)$ over $\Q(u)$, of degree $6$ in $F$,   by eliminating $t$ between $P_3(F)$ and the minimal polynomial 
of $u$. We now repeat the procedure of Section~\ref{sec:sub_extensions}, but with $\tilde P$ and $u$ rather than $P$ and $t$. The \texttt{Subfields} command, used for specific values of $u$, suggests that $\Q(t,B_1)=\Q(u,B_1)$ contains an extension of $\Q(u)$ of degree $2$ (which is $\Q(t,u)$), and another of degree $3$, say $\tilde \GK$, above which $B_1$ should have degree $2$. We then factor $\tilde P(F)$ over $\Q(B_1)$ for various values of~$u$, and observe the following pattern:
\[
  \tilde P(F)= (F-B_1) \tilde Q_1(F)\tilde  Q_2(F)  \hat Q_2(F),
\]
where indices indicate the degree.
Thus, the minimal polynomial of $B_1$ over $\tilde \GK$ should be
\[
  \tilde P_2(F)=(F-B_1) \tilde Q_1(F)=F^2+\tilde p_1 F+\tilde p_0.
\]
We reconstruct it again by rational interpolation in~$u$. Its two coefficients $\tilde p_1$ and $\tilde p_0$ are found indeed to have degree $3$ over $\Q(u)$. In particular, $\tilde p_1$ has  a cubic minimal polynomial  $\tilde M_1(p)$, of degree $21$ in $u$. By observing the repeated factors in the coefficients of $\tilde M_1(p)$, we introduce a series $v_1$ that differs of $p_1$ by a multiplicative factor, and satisfies
\begin{align*}
 \left( 3\,{u}^{3}-15\,{u}^{2}+9\,u+21 \right) v_1^{3}+ \left( 
9\,{u}^{4}-72\,{u}^{3}+126\,{u}^{2}+36\,u+9 \right) v_1^{2} &
\\ - \left( 18\,{u}^{5}-36\,{u}^{4}-99\,{u}^{3}-53\,{u}^{2}-7\,u-1
 \right) v_1+3\,{u}^{2} \left( 1+u \right) ^{4}
&=0.
\end{align*}
The degree in $u$ has reduced to $6$.

Now in the field $\Q(u,v_1)$, we would like to find an even simpler generator than $v_1$. The above curve is found to have genus $0$, so we have a rational parametrization this time, which \Maple\ can compute.
Since this parametrization looks pretty big, one can first  use  the \texttt{NormalBasis} package\footnote{Source code available online: \url{https://www.math.fsu.edu/~hoeij/files/NormalBasis/}.} of van Hoeij and Novocin~\cite{vanHoeijNovocin2005}, which gives a new generator $v_2$ satisfying an equation that is cubic in $v_2$ (of course) and in $u$, and then  parametrize this simpler equation with the \texttt{parametrization} command.
This is how we obtained Equation~\eqref{v-def}.
We then check that $\tilde P(F)$ actually factors over $\Q(u,v)=\Q(v)$, with one factor $\tilde P_2(F)$  of degree $2$, and that the root of~\eqref{v-def} such that this factor of degree $2$ vanishes is the one with constant term zero. Now we have proved the existence of a subfield $\Q(u,v)=\Q(v)$ in $\Q(u,B_1)=\Q(t,B_1)$.
 
 \subsection{Expression of \texorpdfstring{$B_1$}{B1}}
 We return to the minimal polynomial of $B_1$ over $\Q(v)$, namely $\tilde P_2$ and factor it over $\Q(t,v)$ using \texttt{factor(P2(F),RootOf(alg(t,v),t))}, where $alg(t,v)$ is the minimal polynomial of $v$ over $\qs(t)$, which has degree $12$ in $v$ but only $2$ in $t$. This gives us the expression of $B_1$ in Proposition~\ref{prop:explicit-univariate}.

 \subsection{Expression of \texorpdfstring{$R_0$}{R0} and construction of \texorpdfstring{$w$}{w}} 
 We return to the guessed minimal polynomial of $R_0$ over $\qs(t)$, which has degree $24$. We use the minimal polynomial~\eqref{algv} of $v$, and  the first terms of $R_0$, to obtain the minimal polynomial of $R_0$ over $\Q(v)$, which has degree $4$.  This polynomial further factors over $\Q(t,v)$, and we obtain an equation of degree $1$ in $t$,  of the form
 \[
   c_2(v) R_0^2+ t c_1(v)R_0 +c_0(t,v)=0.
 \]
 This suggests to look at the quadratic equation satisfied by $R_0/t$, which is found to have coefficients in $\Q(v)$. We solve it, which leads us to  introduce the series $w$ defined by~\eqref{w-def}, and we finally obtain the expression for $R_0$ stated  in Proposition~\ref{prop:explicit-univariate}.

  \subsection{Expressions of \texorpdfstring{$R_1$}{R1} and \texorpdfstring{$B_2$}{B2}}
  We return to the guessed minimal polynomial of $R_1$, of degree $24$ over $\Q(t)$, and derive as above an equation of degree $4$ over $\Q(v)$.
  This equation factors into four linear terms in $\Q(t,v,w)$, and this gives us the expression for $R_1$ stated in Proposition~\ref{prop:explicit-univariate}.

  We apply the same steps to the minimal polynomial of $B_2$. Recall that it has degree $12$ in $B_2^2$. As a result, the minimal polynomial of $B_2$ over $\Q(v)$ is found to be bi-quadratic.

\section{Another parametrization for \texorpdfstring{$S(x)$}{S(x)} and \texorpdfstring{$R(x)$}{R(x)}}
\label{app:P0}
In Section~\ref{sec:bivariate-sol} we gave two parametrizations for $S(x)$ and $R(x)$, in terms of series $\Pun$ and~$\Ptwo$. Here we give another one in terms of a series denoted $\Pzero$. We have mentioned it in the proof of Proposition~\ref{prop:RhatShat}.

The series $\tilde S(x)$ defined by~\eqref{St-def} satisfies over $\Q(\tilde z, v)$ (where $\tilde z$ is defined by~\eqref{zt-def}) a cubic equation, which can be  parametrized rationally by introducing the unique series $\Pzero$ such that $\Pzero= \bx t +\LandauO(t^2) $ and
\beq\label{zt-P0}
  \tilde z= \frac{\numsmall}{({v}^{4}+8\,{v}^{3}+6\,{v}^{2}+2\,v+1) \den},
\eeq
where
\begin{align*}
    \numsmall&=w^2 {\Pzero}^{3}+v w^2\left( {
        v}^{3}+3 v+2 \right)  {\Pzero}^{2}\\
   &\quad -v \left( 2 v+1 \right)  \left( 4 {v}^{8}+4 {v}^{7}+14
 {v}^{6}+19 {v}^{5}+7 {v}^{4}-22 {v}^{3}-32 {v}^{2}-11 v-1
\right) \Pzero\\
&\quad -{v}^{3} \left( v^2-1 \right) 
 \left( {v}^{5}+{v}^{4}+6 {v}^{3}+8 {v}^{2}+11 v+3 \right)  \left( 
   2 v+1 \right) ^{2}
\end{align*}
and
\beq\label{denP0}
  \den= w^2 \Pzero^{2}+v w^2\left( 2
 v+1 \right)   \Pzero-{v}^{2} \left( v^2-1 \right)  \left( {v}^{2}+v+1 \right)  \left( 2 v+1 \right) ^{2}.
\eeq
We have denoted, as usual, $w^2=1+4v-4v^3-4v^4$. Then we have
\[
  \tilde S(x)+\frac 1 3 =
-  {\frac {{v}^{2} \left( v^2-1 \right)  \left( 2 v+1 \right)   \left( {v}^{2}+4 v+1 \right) ^{2}}{ \left( 2 {v}^{3}-4 v-
   1 \right)
\left(w^2\Pzero^{2}+{v}^{2}
 \left( v^2-1 \right)  \left( 2 v+1 \right)  \left( 2 {v}^{3}+3 {v}^{2}+6 v+1 \right)\right) 
}}.
\]
We can also express $\Rh(x)$ in terms of $\Pzero$:
\beq\label{Rhat-P0}
  \Rh(x)=-\frac{y(1-2t)^2(1+2v)N_1(\Pzero)N_2(\Pzero)N_3(\Pzero)}
  {w(ty^2-t-1)(2v^3-4v-1)({v}^{4}+8\,{v}^{3}+6\,{v}^{2}+2\,v+1)^2 \den^2}
  \eeq
  where $y=x+1+\bx$, $\den$ is given by~\eqref{denP0} and
\begin{align*}
     N_1(U)&= w^2U^2+2v(2v+1)w^2U-v(3v^5+3v^4+2v^3+3v+1)(2v+1)^2, \\
     N_2(U)&=w^2U^2+2v^2w^2(v^2-1)U-v^3(8v^3+12v^2+15v+4)(v^2-1)^2,\\
     N_3(U)&= w^2U^2-(v^2+4v+1)^2U+v^2(v^2-1)(2v+1)(2v^3+3v^2+6v+1).
 \end{align*}

  \section{Another quadratic extension of \texorpdfstring{$\Q(t,v)$}{Q(t,v)}}\label{sec:appendixC}
This extension is different from $\Q(t,w)$, and  is involved in the description of the series $S(1)$ in Proposition~\ref{prop:univariate_series}. The series $\T$ defined by $S(1)+1/2=w\T$ has degree $24$ over $\Q(t)$, degree~$2$ over $\Q(t,v)$, and satisfies:
  \begin{align}
	\label{T-def}
	\begin{aligned}
  0 &= {\T}^{2}+{\frac { \left( 2v+1 \right) \T}{3(2\,{v}^{3}-4v-1)}} \\
    & \quad+ \frac{\numsmall''}{12w^2(1-2t)(2v^3+3v^2+6v+1)^2(2v^3-4v-1)^2(4v^3+3  v^2-1)},
	\end{aligned}
\end{align}
with
\begin{align*}
    \numsmall''&=
    2(4v^3+3v^2-1)(64v^{12}+576v^{11}+2336v^{1}0+5136v^9+6896v^8
  \\
&\quad +4652v^7-832v^6  -4756v^5-4495v^4-2300v^3-682v^2-108v-7)t
    \\ &\quad -128v^{15}-608v^{14}-1312v^{13}-2624v^{12}-4560v^{11}-6808v^{10}-5476v^9
    \\ &\quad +2088v^8+10500v^7+11309v^6+5096v^5+559v^4-220v^3-45v^2+4v+1.
\end{align*}

\newpage
	
\bibliographystyle{abbrv}
\bibliography{Bibliography}
\label{sec:biblio}

\end{document}